\documentclass{dcdsOF}
\usepackage{amsmath}
\usepackage{paralist}
\usepackage[colorlinks=true]{hyperref}
\hypersetup{urlcolor=blue, citecolor=red}

\textheight=8.2 true in
\textwidth=5.0 true in
\topmargin 30pt
\setcounter{page}{1}



\newtheorem{theorem}{Theorem}[section]
\newtheorem{corollary}[theorem]{Corollary}

\newtheorem{lemma}[theorem]{Lemma}
\newtheorem{proposition}[theorem]{Proposition}

\newtheorem*{problem}{Problem}
\theoremstyle{definition}
\newtheorem{definition}[theorem]{Definition}
\newtheorem{remark}[theorem]{Remark}

\newcommand{\ep}{\varepsilon}

\newtheoremstyle{TheoremNum}
{\topsep}{\topsep}              
{\itshape}                      
{}                              
{\bfseries}                     
{.}                             
{ }                             
{\thmname{#1}\thmnote{ \bfseries #3}}
\theoremstyle{TheoremNum}
\newtheorem{thmrep}{Theorem}

\newtheoremstyle{TheoremNum}
{\topsep}{\topsep}              
{\itshape}                      
{}                              
{\bfseries}                     
{.}                             
{ }                             
{\thmname{#1}\thmnote{ \bfseries #3}}
\theoremstyle{TheoremNum}
\newtheorem{correp}{Corollary}

\newcommand{\nnorm}[1]{\lvert\!|\!| #1|\!|\!\rvert}
\newcommand{\norm}[1]{\left\vert\left\vert {#1}\right\vert\right\vert}

\title[Polynomial ergodic averages for rings] 
{Polynomial ergodic averages for certain countable ring actions} 

\author[Andrew Best and Andreu Ferr\'{e} Moragues]{}

\subjclass{Primary: 37A44; Secondary: 28D15, 05D10, 37B05.}
\keywords{Joint ergodicity, independent polynomials, field actions, ring actions, locally compact abelian groups, Gowers--Host--Kra seminorms, equidistribution.}

\email{best.221@osu.edu}
\email{ferremoragues.1@osu.edu}



\begin{document}
	\maketitle
	
	\centerline{\scshape Andrew Best}
	\medskip
	{\footnotesize
		\centerline{Beijing Institute of Mathematical Sciences and Applications}
		\centerline{Beijing, China}
	} 
	
	\medskip
	
	\centerline{\scshape Andreu Ferr\'{e} Moragues}
	\medskip
	{\footnotesize
		\centerline{Department of Mathematics}
		\centerline{Nicolaus Copernicus University}
		\centerline{Toru\'{n}, Poland}
	}
	
	\bigskip
	
	\centerline{(Communicated by Alejandro Maass)}

	\begin{abstract}
		A recent result of Frantzikinakis in \cite{frajointerg} establishes sufficient conditions for joint ergodicity in the setting of $\mathbb{Z}$-actions. We generalize this result for actions of second-countable locally compact abelian groups. We obtain two applications of this result. First, we show that, given an ergodic action $(T_n)_{n \in F}$ of a countable field $F$ with characteristic zero on a probability space $(X,\mathcal{B},\mu)$ and a family $\{p_1,\dots,p_k\}$ of independent polynomials, we have
		\[ \lim_{N \to \infty} \frac{1}{|\Phi_N|}\sum_{n \in \Phi_N} T_{p_1(n)}f_1\cdots T_{p_k(n)}f_k\ = \ \prod_{j=1}^k \int_X f_i \ d\mu,\]
		where $f_i \in L^{\infty}(\mu)$, $(\Phi_N)$ is a F\o lner sequence of $(F,+)$, and the convergence takes place in $L^2(\mu)$. This yields corollaries in combinatorics and topological dynamics. Second, we prove that a similar result holds for totally ergodic actions of suitable rings.
	\end{abstract}

	\section{Introduction}
	In \cite{ertau}, Bergelson asked whether, for a totally ergodic\footnote{We say that a measure-preserving system $(X,\mathcal{B},\mu,T)$ is \emph{totally ergodic} if $T^a$ is ergodic for all $a \in \mathbb{N}$.} system $(X,\mathcal{B},\mu,T)$, the following statement holds:
	\begin{itemize}
		\item[(*)] For any family of independent\footnote{Let $R$ be a countable ring. We say that a family of polynomials $\{p_1,\dots,p_k\} \subset R[n]$ is \emph{independent} if for any $(b_1,\ldots, b_k) \in R^k \setminus \{\vec{0}\}$, the polynomial $\sum_{i=1}^k b_ip_i$ is not constant.
		} polynomials $p_1,\ldots, p_k \in \mathbb{Z}[n]$ and any functions $f_1,\ldots, f_k \in L^\infty(\mu)$, one has
		\begin{equation*}
			\lim_{N\to\infty} \frac{1}{N} \sum_{n=1}^N T^{p_1(n)}f_1 \cdots T^{p_k(n)}f_k \ = \ \int_X f_1 \ d\mu \ \cdots \int_X f_k \ d\mu,
		\end{equation*}
		where convergence takes place in $L^2(\mu)$.
	\end{itemize} This question was answered in the affirmative in \cite{frakra}. In fact, (*) holds if and only if $T$ is totally ergodic.
	
	Recently, the ergodic theory of actions by groups other than the integers has garnered some attention---see, for example,
	\cite{blm}, \cite{btz}, \cite{zorinkranich}, \cite{dgijm}, \cite{shalom}, \cite{abb}, and \cite{ab}.
	
	In this spirit, we consider in this article a version of Bergelson's question for more general actions. In order to formulate this question, we need a definition. Let $G$ be a countable abelian group. We say that a sequence $(\Phi_N)$ of nonempty subsets of $G$ is a \emph{F\o lner sequence} in $G$ if, for every $g \in G$, we have
	\[ \lim_{N \to \infty} \frac{|\Phi_N \Delta (g+\Phi_N)|}{|\Phi_N|}=0,\]
	where $\Delta$ denotes the symmetric difference, i.e. $A\Delta B=(A \setminus B) \cup (B \setminus A)$.

	\begin{problem}\label{introquestion}
		Let $(X,\mathcal{B},\mu)$ be a probability space. For which (countable) ring actions $(T_r)_{r \in R}$ on $(X,\mathcal{B},\mu)$ is the following statement \emph{(**)} true?
		\begin{itemize}
			\item[\emph{(**)}] For any family of independent polynomials $p_1,\ldots,p_k \in R[n]$, any F\o lner sequence\footnote{A perceptive reader will notice that we did not mention F\o lner sequences in (*). In fact, (*) is equivalent to a version which also quantifies over all F\o lner sequences of $\mathbb{Z}$, not just $\Phi_N=\{1,\ldots,N\}$.} $(\Phi_N)$, and any functions $f_1,\dots,f_k \in L^{\infty}(\mu)$, one has
			\[ \lim_{N \to \infty} \frac{1}{|\Phi_N|}\sum_{n \in \Phi_N} T_{p_1(n)}f_1\cdots T_{p_k(n)}f_k \ = \ \prod_{j=1}^k \int_X f_j \ d\mu, \]
			where convergence takes place in $L^2(\mu)$.
		\end{itemize}
	\end{problem}
	As in Bergelson's question, we continue to require that the polynomials be independent. This assumption cannot be removed; see Remark~\ref{example}.
	
	By analogy with the case of $\mathbb{Z}$-actions, one would expect total ergodicity to be necessary and sufficient for (**) to hold.
	
	In fact, we will show that, for countable fields of zero characteristic, (**) holds if and only if the action is ergodic, and that, for certain suitable rings, (**) holds if and only if the action is totally ergodic. These terms will be defined in Section~\ref{subsec: intro2}.
	
	Our results can be understood in a common framework that we now describe.
	\subsection{Sufficient conditions for joint ergodicity of sequences}
	Let us review one of the main results of Frantzikinakis in \cite{frajointerg}. Given an ergodic measure-preserving system $(X,\mathcal{B},\mu,T)$, a family of sequences $a_1,\dots,a_{\ell}: \mathbb{N} \to \mathbb{N}$
	is \emph{jointly ergodic for} $(X,\mathcal{B},\mu,T)$ if, for all $f_1,\ldots, f_\ell \in L^\infty(\mu)$,
	\[ \lim_{N \to \infty} \frac{1}{N}\sum_{n=1}^N T^{a_1(n)}f_1\cdots T^{a_{\ell}(n)}f_{\ell} \ = \ \prod_{i=1}^{\ell} \int_X f_i \ d\mu\]
	holds in $L^2(\mu)$.
	
	Note that the aforementioned question of Bergelson in \cite{ertau} can obviously be reformulated in terms of joint ergodicity of sequences given by independent polynomials.
	
	Inspired by an argument of Peluse \cite{peluse} and Peluse and Prendiville \cite{peluseprendiville}, Frantzikinakis shows in \cite{frajointerg} that a family of sequences $a_1,\ldots, a_\ell$ is jointly ergodic for $(X,\mathcal{B},\mu,T)$ provided that two conditions hold:
	\begin{enumerate}
		\item The sequences $a_1,\dots,a_{\ell}$ are \emph{good for equidistribution for the system} $(X,\mathcal{B},\mu,\\T)$, meaning that
		\[ \lim_{N \to \infty} \frac{1}{N}\sum_{n=1}^N e^{2\pi i(a_1(n)\alpha_1+\dots+a_{\ell}(n)\alpha_{\ell})} \ = \ 0\]
		holds for any $\alpha_1,\dots,\alpha_{\ell} \in [0,1)$, not all zero, which are such that there exist non-constant functions $f_j: X \to \mathbb{S}^1$, with $Tf_j=e^{2\pi i \alpha_j}f_j$, $j=1,\dots,\ell$.
		\item The sequences $a_1,\dots,a_{\ell}$ are \emph{good for seminorm\footnote{These seminorms $\nnorm{\cdot}_s$ are the Gowers--Host--Kra seminorms, whose definition and basic properties will be given in Section~\ref{sec: 3.1}.} estimates for the system} $(X,\mathcal{B},\mu,T)$, meaning that there exists $s \in \mathbb{N}$ such that whenever $f_1,\ldots, f_\ell \in L^\infty(\mu)$ satisfy $\nnorm{f_l}_s = 0$ for some $l \in \{1,\dots, \ell\}$, then
\[ \lim_{N \to \infty} \frac{1}{N}\sum_{n=1}^N T^{a_1(n)}f_1\cdots T^{a_{\ell}(n)}f_{\ell}=0\]
holds in $L^2(\mu)$, provided that $f_{l+1},\dots,f_\ell$ are eigenfunctions, i.e., belong to $\{f \in L^\infty(\mu) : |f| = 1 \text{ and } Tf = e^{2\pi i t} f \text{ for some } t \in [0,1)\}$.
		
	\end{enumerate}
	For our purposes, it will be useful to create a far-reaching generalization of Frantzikinakis's result from the setting of $\mathbb{Z}$-actions to the setting of actions of second-countable locally compact abelian groups $G$. We state this generalization as Theorem~\ref{framainthm} below; the precise definitions of the relevant terms we generalize from $\mathbb{Z}$ to $G$ will be left to Sections~\ref{sec: 2} and \ref{sec: 3}.
	
	\begin{theorem}\label{framainthm}
		Let $G$ be a second-countable locally compact abelian group. Let $(\Phi_N)$ be a F\o lner sequence. Let $(X,\mu,(T_g)_{g \in G})$ be an ergodic system and $a_1,\dots,a_{\ell} : G \to G$ be mappings that are good for seminorm estimates and equidistribution for $(X,\mu,(T_g)_{g \in G})$. Then, for any $f_1,\dots,f_{\ell} \in L^{\infty}(\mu)$ we have:
				\begin{equation}\label{mainthmeqintro}
			\lim_{N \to \infty} \mathbb{E}_{n \in \Phi_N} T_{a_1(n)}f_1\cdots T_{a_{\ell}(n)} f_{\ell} \ = \ \int_X f_1 \ d\mu \cdots \int_X f_{\ell} \ d\mu \end{equation}
		in $L^2(\mu)$.
	\end{theorem}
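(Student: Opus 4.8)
The plan is to adapt the argument of Frantzikinakis from the $\mathbb{Z}$-setting to the group $G$, following the two-hypothesis strategy outlined in the introduction. The overall structure is a reduction of the joint ergodicity statement \eqref{mainthmeqintro} to the two given conditions---good for seminorm estimates and good for equidistribution---via a Host--Kra type structure theory. First I would invoke the decomposition of each $L^\infty$ function relative to the Host--Kra factors: for the appropriate level $s$ (the one furnished by the seminorm-estimates hypothesis), write each $f_i = f_i' + f_i''$ where $f_i'$ is measurable with respect to the degree-$s$ Host--Kra factor $\mathcal{Z}_s$ and $f_i''$ has vanishing $\nnorm{\cdot}_{s+1}$ seminorm (or, depending on the indexing convention in Section~\ref{sec: 3.1}, the appropriate $\nnorm{\cdot}_s$). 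By multilinearity of the average in \eqref{mainthmeqintro} and the seminorm-estimate hypothesis, any term in the resulting expansion containing a factor with vanishing seminorm contributes zero in the limit, so it suffices to prove the identity when every $f_i$ is measurable with respect to $\mathcal{Z}_s$.

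The next step is to reduce from functions on $\mathcal{Z}_s$ to the concrete algebraic model of the pro-nilsystem (or its abelian-group analogue) representing this factor. Here I would use the fact that, on an ergodic system, $\mathcal{Z}_s$ is an inverse limit of nilsystems---more precisely, for a general second-countable locally compact abelian $G$, one needs the analogue of this structure theory, which I would take as available from the definitions in Section~\ref{sec: 3}. By a density and continuity argument, it is enough to verify \eqref{mainthmeqintro} for functions that are, up to approximation, built from eigenfunctions (characters of the Kronecker factor) and their nilpotent generalizations. On such structured functions the average becomes an explicit exponential sum, and this is precisely where the equidistribution hypothesis enters: the condition that
\[ \lim_{N \to \infty} \mathbb{E}_{n \in \Phi_N} \, e^{2\pi i (a_1(n)\alpha_1 + \dots + a_\ell(n)\alpha_\ell)} = 0 \]
for nontrivial spectral data forces all cross terms to vanish, leaving only the product of integrals on the right-hand side of \eqref{mainthmeqintro}.

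The main obstacle I anticipate is the van der Corput / seminorm-control machinery in the general group setting: the Gowers--Host--Kra seminorms and the associated PET-style or Host--Kra inductive estimates were originally developed for $\mathbb{Z}$-actions with the natural ordering and dilation structure of the integers, and transporting them to an arbitrary second-countable locally compact abelian group $G$ requires that all the averaging, van der Corput inequalities, and the definition of the seminorms be formulated purely in terms of F\o lner sequences $(\Phi_N)$. The subtlety is to ensure that the seminorm $\nnorm{\cdot}_s$ is well-defined independently of the choice of F\o lner sequence and that the van der Corput estimate producing the seminorm control is valid for the given $(\Phi_N)$; this is exactly the content that must be established in Sections~\ref{sec: 2} and~\ref{sec: 3}, and I would lean on those results to push the $\mathbb{Z}$-argument through.

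Finally, I would assemble the pieces: the seminorm reduction kills all non-structured contributions, the structure theory replaces the structured part by an explicit nilsequence-type average, equidistribution annihilates the off-diagonal exponential sums, and what survives is exactly $\int f_1 \, d\mu \cdots \int f_\ell \, d\mu$, with all convergence taking place in $L^2(\mu)$ by the uniform boundedness of the functions involved together with the dominated convergence afforded by the F\o lner averaging.
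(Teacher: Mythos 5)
Your proposal does not match the paper's proof, and it contains a genuine gap that the paper explicitly flags as unavoidable. The central step of your argument --- replacing each $f_i$ by its conditional expectation on a Host--Kra factor $\mathcal{Z}_s$ and then representing that factor as an inverse limit of nilsystems so that the average becomes an explicit exponential sum --- relies on a structure theorem that is not available for a general second-countable locally compact abelian group $G$. The paper says this directly: ``an essential ingredient to this approach is a suitable description of said factors, which is not yet available in this generality. Obtaining such a description remains an open problem,'' and it emphasizes that its proof of Theorem~\ref{framainthm} eschews Host--Kra structure theory and equidistribution results on nilmanifolds. So the passage from $\mathcal{Z}_s$-measurable functions to nilsequence-type averages is precisely the missing ingredient, not something you can lean on from Section~\ref{sec: 3}.

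There is a second, independent problem with your first reduction. The hypothesis ``good for seminorm estimates'' as defined in the paper is conditional: it yields vanishing of the average when $\nnorm{f_l}_s = 0$ \emph{and} the trailing functions $f_{l+1},\dots,f_\ell$ all lie in $\mathcal{E}((T_g)_{g\in G})$. Your multilinear expansion $f_i = f_i' + f_i''$ requires the stronger, unconditional statement that any single factor with vanishing seminorm kills the average regardless of the other functions; that is not what the hypothesis provides. The paper's actual proof is organized around exactly this difficulty: it runs an induction on $l$ over the properties $(P_l)$, which progressively relax the eigenfunction constraint, and at each stage argues by contradiction, constructing a dual function $\tilde f_l$ as a weak limit (Proposition~\ref{flprop}) and then running a degree-lowering argument (Proposition~\ref{mainprop}, in the spirit of Peluse and Peluse--Prendiville) showing that $\nnorm{\tilde f_l}_{s+2}>0$ implies $\nnorm{\tilde f_l}_{s+1}>0$; iterating gives $\nnorm{\tilde f_l}_1>0$, hence $\int_X \tilde f_l\,d\mu \neq 0$ by ergodicity, contradicting $(P_{l-1})$. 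The equidistribution hypothesis enters inside that degree-lowering step, to force certain spectral homomorphisms $\beta_{\underline{n},\underline{n}'}$ to be trivial, rather than through an explicit exponential-sum evaluation on a nilfactor. A proof valid in this generality has to follow that soft, structure-free route.
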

	This result is proved in Section~\ref{sec: 3}. Some intermediate results require relatively substantial changes compared to the case of $\mathbb{Z}$-actions. Some proofs of intermediate results are omitted when their adaptations are routine. As with the proof of Frantzikinakis's theorem, our proof of Theorem~\ref{framainthm} uses only ``soft'' tools from ergodic theory and eschews Host--Kra structure theory (see, for example, \cite[Chapter~21]{hostkra} for a detailed description) and equidistribution results on nilmanifolds.
	
	
	\subsection{Answering the Problem, and related corollaries}\label{subsec: intro2}
	
	We will apply Theorem~\ref{framainthm} to deduce Theorems~\ref{findep} and \ref{rindep}, which provide answers to the Problem above. Thus, we need only verify that the necessary equidistribution results and seminorm estimates hold for sequences given by families of independent polynomials.
	
	
	To facilitate discussion, we recall two definitions. Let $G$ be a countable abelian group. An action $(T_g)_{g \in G}$ on a probability space $(X,
	\mathcal{B},\mu)$ is \emph{ergodic} if any $f \in L^2(\mu)$ with $T_gf=f$ for all $g \in G$ is constant, and \emph{totally ergodic} if every subaction on a finite index subgroup $H\leq G$ is ergodic.
	
	\subsubsection{Countable fields of characteristic zero}
	
	Our first application of Theorem~\ref{framainthm} will be to ergodic actions of countable fields of characteristic zero. One might wonder why we choose to work with fields first; a few comments are in order.
	
	It is well known that fields of characteristic zero do not have additive subgroups of finite index. Indeed, this algebraic property means that ergodic actions of such fields are totally ergodic\footnote{This fact has manifested before; see, e.g., \cite{larick}.}. This coincidence allows us to obtain a positive answer to the Problem for any such ergodic action.

	Only requiring ergodicity (instead of total ergodicity) will turn out to have many interesting combinatorial and topological-dynamical consequences that we cannot obtain otherwise.
	
	We will prove the following theorem in Section~\ref{sec: 4}. Recall that the notion of independent polynomials was defined in footnote 2 above.
	\begin{theorem}\label{findep}
		Let $F$ be a countable field with characteristic zero. Let $(X,\mu,\break (T_n)_{n \in F})$ be an ergodic $F$-system. Let $(\Phi_N)$ be a F\o lner sequence in $(F,+)$, let $k \in \mathbb{N}$, let $p_1,\dots,p_k \in F[x]$ be independent polynomials, and let $f_1,\dots,f_k \in L^{\infty}(\mu)$ be functions. Then
		\[ \lim_{N \to \infty} \frac{1}{|\Phi_N|}\sum_{n\in \Phi_N} T_{p_1(n)}f_1\cdots T_{p_k(n)}f_k \ = \ \prod_{j=1}^k \int_X f_i \ d\mu\]
		in $L^2(\mu)$.
	\end{theorem}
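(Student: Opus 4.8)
The plan is to deduce Theorem~\ref{findep} from the general criterion in Theorem~\ref{framainthm} applied to $G = (F,+)$, taking $\ell = k$ and the maps $a_j(n) = p_j(n)$ for $j = 1, \dots, k$. Since $(\Phi_N)$ is a F\o lner sequence and $(X,\mu,(T_n)_{n \in F})$ is ergodic, it suffices to verify that the two hypotheses of Theorem~\ref{framainthm} hold: that the family $\{p_1, \dots, p_k\}$ is \emph{good for equidistribution} and \emph{good for seminorm estimates} for the system. The key algebraic input is that a countable field $F$ of characteristic zero has no additive subgroups of finite index; as noted in the excerpt, this forces any ergodic $F$-action to be totally ergodic, which should substantially simplify both verifications compared with the general ring case (Theorem~\ref{rindep}).

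First I would verify the equidistribution condition. The Host--Kra eigenvalues of the system are characters $\chi_j \in \widehat{F}$ (the Pontryagin dual), and equidistribution requires that for any nontrivial such characters the averages $\mathbb{E}_{n \in \Phi_N} \prod_{j} \chi_j(p_j(n))$ tend to $0$. Because $\chi \circ p_j$ is a character of $F$ composed with a polynomial map, and because the eigenvalue group of a \emph{totally} ergodic field action must be torsion-free (no finite-index kernels), I expect to reduce this to a Weyl-type equidistribution statement for polynomials $\sum_j b_j p_j$ along F\o lner sequences. The \emph{independence} hypothesis is exactly what guarantees $\sum_j b_j p_j$ is non-constant whenever $(b_1,\dots,b_k) \neq \vec 0$, so the relevant composed character is genuinely non-trivial and the exponential sum equidistributes to zero.

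Second I would verify the seminorm estimates: there should exist $s \in \mathbb{N}$ so that the average $\mathbb{E}_{n \in \Phi_N} \prod_j T_{p_j(n)} f_j$ vanishes in $L^2(\mu)$ whenever some $f_l$ has vanishing $s$-th Gowers--Host--Kra seminorm, $\nnorm{f_l}_s = 0$, and the remaining $f_{l+1}, \dots, f_k$ are eigenfunctions. This is the PET-induction / van der Corput part of the argument: one runs a polynomial exhaustion (PET) scheme over $F$, at each stage applying the van der Corput inequality along $(\Phi_N)$ to lower the ``weight'' of the polynomial family, until a single polynomial factor of positive degree remains and the seminorm control kicks in. Here I would again lean on independence to ensure the PET scheme terminates with a genuinely non-degenerate leading term, and on total ergodicity to handle the eigenfunction factors by absorbing their eigenvalue phases into the equidistribution already established.

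The main obstacle I anticipate is the PET-induction step over a general countable field rather than over $\mathbb{Z}$: one must set up a suitable notion of degree/leading-coefficient and a well-ordering of polynomial systems that strictly decreases under the van der Corput shift $n \mapsto n + h$, all while the ``denominators'' and shift parameters now range over $F$ (or an appropriate subring) rather than $\mathbb{N}$. Carefully tracking that the van der Corput operations stay within the F\o lner-averaging framework for $(F,+)$, and that independence is preserved (or suitably weakened to a controllable form) at each inductive step, is where the real work lies; the equidistribution check, by contrast, I expect to follow relatively routinely from the torsion-freeness of the eigenvalue group once independence is invoked.
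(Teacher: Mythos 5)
Your proposal follows essentially the same route as the paper: deduce the theorem from Theorem~\ref{framainthm} by verifying that independent polynomials are good for equidistribution (the paper's Theorem~\ref{characterequidistribution}, proved by eliminating character relations via the field structure and applying van der Corput to reduce to the linear case) and good for seminorm estimates (the paper's Theorem~\ref{BTZ factor is characteristic}, proved by PET induction over $F^d$ \`a la Leibman). The only cosmetic difference is that the paper's equidistribution argument in the field case works directly with arbitrary nontrivial characters rather than invoking total ergodicity or torsion-freeness of the eigenvalue group, which is reserved for the ring case.
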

	It is worth mentioning that an alternative approach to proving Theorem~\ref{findep} and Theorem~\ref{rindep} would be to use an appropriate analog of the structure theory of Host--Kra characteristic factors \`a la \cite{frakra}. However, an essential ingredient to this approach is a suitable description of said factors, which is not yet available in this generality. Obtaining such a description remains an open problem.
	
	An application of the Furstenberg correspondence principle (see Section~\ref{sec: 4} for details) allows us to deduce the following corollary from Theorem~\ref{findep}. We first recall a notion of largeness. Let $G$ be a countable abelian group. Given a subset $E \subseteq G$, the \emph{upper Banach density} of $E$, denoted by $d^*(E)$, is defined by
	\[ d^*(E):=\sup\{\bar{d}_{(\Phi_N)}(E) : (\Phi_N) \textrm{ is a F\o lner sequence in } G\},\]
	where, for a given F\o lner sequence $(\Phi_N)$, we put
	\[ \bar{d}_{(\Phi_N)}(E):=\limsup_{N \to \infty} \frac{|E \cap \Phi_N|}{|\Phi_N|}.\]
	\begin{corollary}\label{largesetspoly}
		Let $F$ be a countable field with characteristic zero. Let $E \subseteq F$ with $d^*(E)>0$, $k \in \mathbb{N}$, and $p_1,\dots,p_k \in F[x]$ be independent polynomials. Then, for each $\varepsilon>0$, the set $\{ n \in F : d^*(E \cap (E-p_1(n)) \cap \dots \cap (E-p_k(n)))>d^*(E)^{k+1}-\varepsilon\}$ is syndetic\footnote{We say that a set $S \subseteq F$ is \emph{syndetic} if $F=\bigcup_{n \in K} (S-n)$, where $K$ is a finite subset of $F$.}.
	\end{corollary}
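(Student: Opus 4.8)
The plan is to pass from the combinatorial statement to a statement about multiple correlations via the Furstenberg correspondence principle, and then to feed those correlations into Theorem~\ref{findep}. First I would apply the correspondence principle to the countable amenable group $(F,+)$: given $E \subseteq F$ with $d^*(E) > 0$, one obtains a measure-preserving $F$-system $(X,\mathcal B,\mu,(T_n)_{n \in F})$ and a set $A \in \mathcal B$ with $\mu(A) = d^*(E)$ such that, with the action suitably normalized so that $T_g$ realizes the shift by $g$, one has for every $n \in F$
\[ d^*\bigl(E \cap (E-p_1(n)) \cap \dots \cap (E-p_k(n))\bigr) \ \geq \ \mu\bigl(A \cap T_{p_1(n)}A \cap \dots \cap T_{p_k(n)}A\bigr). \]
It then suffices to show that the set $S := \{ n \in F : \mu(A \cap T_{p_1(n)}A \cap \dots \cap T_{p_k(n)}A) > d^*(E)^{k+1} - \varepsilon \}$ is syndetic, since $S$ is contained in the set appearing in the statement.

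The next step is to average these correlations along an arbitrary F\o lner sequence. Writing the correlation as $\int_X \mathbf 1_A \cdot (T_{p_1(n)}\mathbf 1_A) \cdots (T_{p_k(n)}\mathbf 1_A)\, d\mu$, I want to evaluate the limit of $\mathbb E_{n \in \Phi_N}$ of this quantity. Here the main obstacle appears: the system furnished by the correspondence principle need not be ergodic, whereas Theorem~\ref{findep} requires ergodicity. I would circumvent this using the ergodic decomposition $\mu = \int_\Omega \mu_\omega\, d\nu(\omega)$, which exists because $(F,+)$ is countable abelian. For $\nu$-almost every $\omega$ the component system $(X,\mu_\omega,(T_n))$ is ergodic, hence---since $F$ has characteristic zero, so that ergodicity entails total ergodicity---Theorem~\ref{findep} applies with $f_1 = \dots = f_k = \mathbf 1_A$. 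Multiplying the resulting $L^2$-limit by $\mathbf 1_A$ and integrating gives, for $\nu$-a.e. $\omega$,
\[ \lim_{N \to \infty} \mathbb E_{n \in \Phi_N}\, \mu_\omega\bigl(A \cap T_{p_1(n)}A \cap \dots \cap T_{p_k(n)}A\bigr) \ = \ \mu_\omega(A)^{k+1}. \]
Since these averages are bounded by $1$, dominated convergence and Fubini let me integrate over $\omega$, and Jensen's inequality for the convex map $t \mapsto t^{k+1}$ then yields
\[ \lim_{N \to \infty} \mathbb E_{n \in \Phi_N}\, \mu\bigl(A \cap T_{p_1(n)}A \cap \dots \cap T_{p_k(n)}A\bigr) \ = \ \int_\Omega \mu_\omega(A)^{k+1}\, d\nu(\omega) \ \geq \ \mu(A)^{k+1} \ = \ d^*(E)^{k+1}. \]
Crucially, because Theorem~\ref{findep} holds for every F\o lner sequence, this lower bound holds for every F\o lner sequence $(\Phi_N)$.

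Finally I would deduce syndeticity of $S$ from this uniform lower bound, arguing by contradiction. If $S$ were not syndetic, then by the duality between syndetic and thick sets its complement $F \setminus S$ would be thick; and a thick subset of the amenable group $(F,+)$ contains a F\o lner sequence $(\Phi_N)$, obtained by placing translates of a fixed F\o lner sequence inside it. Along such $(\Phi_N)$ every term satisfies $\mu(A \cap T_{p_1(n)}A \cap \dots \cap T_{p_k(n)}A) \leq d^*(E)^{k+1} - \varepsilon$, so $\mathbb E_{n \in \Phi_N}$ of these correlations is at most $d^*(E)^{k+1} - \varepsilon$ for all $N$, contradicting the limit being at least $d^*(E)^{k+1}$. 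Hence $S$ is syndetic, and the corollary follows. The two delicate points to watch are the non-ergodicity of the correspondence system, dealt with by the ergodic decomposition together with Jensen's inequality, and the fact that the syndeticity conclusion genuinely depends on Theorem~\ref{findep} holding for all F\o lner sequences rather than a single one.
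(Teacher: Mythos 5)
Your argument is correct, and its overall skeleton (correspondence principle, lower bound on averaged correlations along every F\o lner sequence via Theorem~\ref{findep}, syndeticity from the uniformity over F\o lner sequences) matches the paper's. The one genuine divergence is how ergodicity is secured: the paper invokes an \emph{ergodic} Furstenberg correspondence principle (citing Bergelson--Ferr\'e Moragues, \cite[Theorem~2.8]{bf}), which hands it an ergodic system with $\mu(A)=d^*(E)$ outright, so Theorem~\ref{findep} applies directly and no decomposition is needed; you instead take the classical correspondence principle and repair the possible non-ergodicity by passing to the ergodic decomposition, applying Theorem~\ref{findep} fibrewise (using that ergodic $F$-actions are automatically totally ergodic in characteristic zero), and then recovering the global bound $\int \mu_\omega(A)^{k+1}\,d\nu(\omega) \geq \mu(A)^{k+1}$ via Jensen. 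Both are sound; the ergodic correspondence principle buys a shorter proof at the cost of a less standard black box, while your route is self-contained modulo the classical correspondence principle and mirrors the non-ergodic corollary the paper itself proves after Theorem~\ref{findep}. Your final step --- non-syndetic implies the complement is thick, a thick set contains a translated F\o lner sequence, contradiction with the uniform lower bound --- is exactly the content of \cite[Lemma~1.9]{abb}, which the paper simply cites; spelling it out is fine and correct. Minor points to tidy: keep the sign convention consistent (the correlation should be $\mu(A\cap T_{-p_1(n)}A\cap\dots\cap T_{-p_k(n)}A)$ with the paper's conventions), and note that the ergodic decomposition is available because the underlying space is assumed standard.
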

	Because of the strength of Theorem~\ref{findep}, we also have a corollary in topological dynamics. We say that $(X,d,(T_n)_{n \in F})$ is a \emph{minimal topological dynamical system} if $(X,d)$ is a compact metric space, the maps $T_n: X \to X$, $n \in F,$ form an action of $F$ on $X$ by homeomorphisms, and for each $x \in X$ we have
	\[\overline{\{T_nx : n \in F\}}=X.\]
	\begin{corollary}\label{cor: findep}
		Let $F$ be a countable field with characteristic zero. Let $(X,d,\break (T_n)_{n \in F})$ be a minimal topological dynamical system, where $(X,d)$ is a compact metric space. Let $k \in \mathbb{N}$ and $p_1,\dots,p_k \in F[x]$ be a family of independent polynomials. Then there exists a dense $G_{\delta}$ set $X_0 \subseteq X$ such that for every $x \in X_0$ we have
		\begin{equation*} \overline{\{(T_{p_1(n)}x,\dots,T_{p_k(n)}x) : n \in F\}}=\underbrace{X \times \dots \times X}_{k\ \mathrm{times}}.
		\end{equation*}
	\end{corollary}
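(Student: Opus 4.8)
The plan is to deduce this topological statement from Theorem~\ref{findep} by a Baire category argument, bridging the measure-theoretic and topological pictures with an ergodic invariant measure of full support.

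First I would fix a countable base $\{W_m\}_{m\in\mathbb{N}}$ for the topology of $X$ and rewrite the target set. A point $x$ satisfies $\overline{\{(T_{p_1(n)}x,\dots,T_{p_k(n)}x):n\in F\}}=X^k$ exactly when its polynomial orbit meets every basic box $W_{m_1}\times\dots\times W_{m_k}$; thus, writing
\[ A_{m_1,\dots,m_k}:=\bigcup_{n\in F}\ \bigcap_{i=1}^k T_{p_i(n)}^{-1}W_{m_i}, \]
we have $X_0=\bigcap_{(m_1,\dots,m_k)\in\mathbb{N}^k}A_{m_1,\dots,m_k}$, a countable intersection of sets that are open, each being a union of preimages of open sets under the homeomorphisms $T_{p_i(n)}$. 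Since $X$ is compact metric, hence a Baire space, it suffices to show that each $A_{m_1,\dots,m_k}$ is dense.

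Next I would produce the measure. As $(F,+)$ is a countable abelian group it is amenable, so the weak-$*$ compact convex set of $F$-invariant Borel probability measures on $X$ is nonempty, and its extreme points are ergodic; hence I may fix an ergodic invariant measure $\mu$. The support of any invariant measure is a nonempty closed $F$-invariant set, so minimality forces $\mathrm{supp}(\mu)=X$, whence every nonempty open set has positive $\mu$-measure. To prove density, fix a nonempty open set $V$ and a box $W_{m_1}\times\dots\times W_{m_k}$. Applying Theorem~\ref{findep} to $(X,\mu,(T_n)_{n\in F})$ with $f_i=\mathbf{1}_{W_{m_i}}$ and taking the $L^2(\mu)$-inner product of the limit against $\mathbf{1}_V$ gives
\[ \lim_{N\to\infty}\frac{1}{|\Phi_N|}\sum_{n\in\Phi_N}\mu\Big(V\cap\bigcap_{i=1}^k T_{p_i(n)}^{-1}W_{m_i}\Big)=\mu(V)\prod_{i=1}^k\mu(W_{m_i})>0, \]
the positivity being exactly where full support enters. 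Hence some $n\in F$ yields $\mu\big(V\cap\bigcap_i T_{p_i(n)}^{-1}W_{m_i}\big)>0$, so that set is nonempty and $V\cap A_{m_1,\dots,m_k}\neq\emptyset$.

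As $V$ was arbitrary, each $A_{m_1,\dots,m_k}$ is dense, and the Baire category theorem then makes $X_0$ a dense $G_\delta$. The one point requiring care is verifying that an ergodic invariant measure has full support---this is precisely where minimality is indispensable, and it is what guarantees the strict positivity $\mu(V)\prod_i\mu(W_{m_i})>0$ driving the density step. Everything else is a routine marriage of Theorem~\ref{findep} with Baire category.
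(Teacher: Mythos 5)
Your argument is correct and follows essentially the same route as the paper: fix an ergodic invariant measure, use minimality to get full support, apply Theorem~\ref{findep} to indicator functions to see that $V\cap\bigcap_i T_{p_i(n)}^{-1}W_{m_i}$ has positive measure for some $n$, and conclude with a countable basis and the Baire category theorem. The only cosmetic differences are that you justify the existence of the ergodic measure via amenability (the paper takes it for granted) and that the paper notes the relevant set of $n$ is in fact syndetic, which you do not need.
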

	\subsubsection{``Good'' rings}
	Our second application of Theorem~\ref{framainthm} will be to totally ergodic actions of a suitable class of rings which we call good rings and which we now define.
	\begin{definition}
		Let $R$ be a commutative ring. We say that $R$ is \emph{good} if it is a countable integral domain with characteristic zero, and every non-zero ideal has finite index in $R$.
	\end{definition}
	Defining this class of rings is meant to capture the properties of the ring $\mathbb{Z}$ that are desirable from the perspective of applications to ergodic theory. Good rings include countable fields of characteristic zero and also rings of integers of number fields. Our application of Theorem~\ref{framainthm} in the case of good rings is similar to that of countable fields of characteristic zero, but requires some slight modifications.
	
	Let us comment on the assumptions embedded in this definition. Countability is assumed for convenience. Working with rings of characteristic zero is meant to avoid technical trouble arising from torsion; the positive characteristic case is unknown to the authors. Similarly, in several places our argument requires non-zero ideals to have finite index; it is not known whether this assumption may be dispensed with. Lastly, it can be shown that (**) does not hold for countable rings with characteristic zero such that every non-zero ideal has finite index that are \emph{not} integral domains.
	We will prove the following theorem in Section~\ref{sec: 5}.
	\begin{theorem}\label{rindep}
		Let $R$ be a good ring. Let $(X,\mu,(T_r)_{r \in R})$ be a totally ergodic $R$-system. Let $(\Phi_N)$ be a F\o lner sequence in $(R,+)$, let $k \in \mathbb{N}$, let $p_1,\dots,p_k \in R[x]$ be independent polynomials, and let $f_1,\dots,f_k \in L^{\infty}(\mu)$ be functions. Then
		\[ \lim_{N \to \infty} \frac{1}{|\Phi_N|}\sum_{n\in \Phi_N} T_{p_1(n)}f_1\cdots T_{p_k(n)}f_k \ = \ \prod_{j=1}^k \int_X f_i \ d\mu\]
		in $L^2(\mu)$.
	\end{theorem}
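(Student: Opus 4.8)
The plan is to deduce Theorem~\ref{rindep} from the general criterion Theorem~\ref{framainthm} applied to the group $G = (R,+)$, which, being countable and discrete, is second-countable and locally compact abelian. Since total ergodicity of $(X,\mu,(T_r)_{r\in R})$ implies ergodicity (take the subgroup $H=R$), it suffices to verify that the maps $a_j := p_j : R \to R$, $j=1,\dots,k$, are good for equidistribution and good for seminorm estimates for the system. This is the same reduction used for the field case (Theorem~\ref{findep}); the real work lies in adapting the two verifications from the vector space $\widehat{F}$ to the module $\widehat{R}$, where torsion phenomena appear.

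For equidistribution, I would exploit the $R$-module structure on the dual group $\widehat{R}$ given by $(c\cdot\chi)(r) := \chi(cr)$. Writing $p_j(n) = \sum_{i} c_{j,i} n^i$, the relevant exponential sum factors as $\prod_{j}\chi_j(p_j(n)) = \prod_i \psi_i(n^i)$, where $\chi_1,\dots,\chi_k$ are the eigenvalue-characters and $\psi_i := \sum_j c_{j,i}\cdot\chi_j \in \widehat{R}$. The first step is to show that, unless all $\chi_j$ are trivial, some positive-degree coefficient $\psi_i$ is non-torsion (that is, has trivial annihilator in $\widehat{R}$). Here independence enters: it says exactly that the coefficient matrix $(c_{j,i})_{j,\,i\geq 1}$ has full row rank over the fraction field $K = \mathrm{Frac}(R)$, so some $k\times k$ minor has nonzero determinant $\delta \in R$. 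If every positive-degree $\psi_i$ were torsion, clearing denominators by a single nonzero $r$ and applying Cramer's rule would yield $(\delta r)\cdot \chi_j = 0$ for all $j$; since $\delta r \neq 0$, each $\chi_j$ is then trivial on the nonzero---hence, $R$ being a good ring, finite-index---ideal $(\delta r)$, so is a torsion eigenvalue, and total ergodicity forces it to be trivial. This is the precise point at which both defining features of a good ring and total ergodicity are used.

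It then remains to prove a Weyl-type equidistribution statement over $R$: a polynomial character $n \mapsto \prod_i \psi_i(n^i)$ averages to $0$ along every F\o lner sequence provided some positive-degree $\psi_i$ is non-torsion. Letting $j$ be the largest degree with $\psi_j$ non-torsion, the torsion coefficients $\psi_i$ with $i>j$ are all trivial on a common nonzero finite-index ideal $I$; since $n^i \equiv \rho^i \pmod{I}$ whenever $n\equiv\rho\pmod I$, these high-degree terms are constant on each coset of $I$. Restricting the average to such a coset (F\o lner sequences equidistribute over the finitely many cosets of $I$) reduces matters to a polynomial character whose leading coefficient is non-torsion, which I would handle by the standard van der Corput degree-reduction: differencing replaces the leading coefficient $\psi_j$ by $(jh)\cdot\psi_j$, still non-torsion for every $h\neq 0$ because $R$ is a domain of characteristic zero, so the induction bottoms out at the degree-one case $\mathbb{E}_{n\in\Phi_N}\psi(n)\to 0$ for nontrivial $\psi$, which is immediate from the F\o lner property.

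Finally, the seminorm estimates are obtained by a PET-induction and van der Corput argument parallel to the field case, bounding the averages by a Gowers--Host--Kra seminorm $\nnorm{f_l}_s$ of one of the functions; the eigenfunction factors $f_{l+1},\dots,f_k$ contribute character weights that are treated exactly as in the equidistribution step, again invoking total ergodicity to discard nontrivial torsion eigenvalues during the differencing. I expect the main obstacle to be the Weyl-type lemma over a good ring: unlike the field case, where $\widehat{F}$ is a $K$-vector space and every nonzero coefficient is automatically non-torsion, here one must separate the torsion (periodic) part of the polynomial character and pass to cosets of a finite-index ideal before the differencing can be carried out---this coset reduction, powered by the finite-index property of nonzero ideals, is the key technical modification distinguishing the ring case from the field case.
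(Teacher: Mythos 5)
Your overall architecture matches the paper's: reduce to Theorem~\ref{framainthm} applied to $G=(R,+)$ and verify the two hypotheses for the maps $p_1,\dots,p_k$. Your equidistribution argument is correct and is a clean repackaging of the paper's Theorem~\ref{characterequidistributionrings}: where the paper iteratively eliminates characters satisfying a relation $\chi_t(b_t\,\cdot)\cdots\chi_k(b_k\,\cdot)\equiv 1$ by splitting the F\o lner sequence into cosets of the finite-index ideal $(b_k)$ and substituting $n\mapsto b_kn+r_i$, you locate a single non-torsion coefficient $\psi_i$ via Cramer's rule over $\mathrm{Frac}(R)$ and then quotient out the torsion higher-degree coefficients by one coset decomposition. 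Both routes rest on the same two pillars --- finite index of nonzero ideals, and irrationality of nontrivial eigenvalue characters of a totally ergodic action --- and end with the same van der Corput degree reduction, so this part is a genuine (and arguably tidier) alternative.

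The gap is in the seminorm estimates, which you dismiss as ``parallel to the field case.'' They are not: the field-case PET induction bottoms out in Lemmas~\ref{easy linear bound} and \ref{linear seminorm identity}, whose proofs hinge on the identification, valid \emph{because $F$ is a field}, of the invariant subspace of the subaction $(T_{a_1g_1+\cdots+a_dg_d})$ with that of the full action $(T_g)_{g\in F}$. Over a good ring the coefficients $a_1,\dots,a_d$ generate only a nonzero ideal $J$, and the mean ergodic theorem projects onto the $J$-invariant functions --- on $X$ and, worse, on the cube spaces $X^{[k]}$, where total ergodicity of $X$ does not make the $J$-invariant and $R$-invariant subspaces coincide. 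The paper's fix (Lemmas~\ref{easy linear bound rings} and \ref{linear seminorm identity rings}) is to use the finite index of $J$ to pass from averages over $\Psi_N\cap J$ to averages over $\Psi_N$ at the cost of a factor $[R:J]$ and a seminorm-degree increment ($c\nnorm{f}_2$ in place of $\nnorm{f}_1$, and an inequality $\leq [R:J]\,\nnorm{f}_{k+1}^{2^{k+1}}$ in place of an identity), after which the PET induction goes through unchanged. Your proposed modification --- tracking ``character weights from eigenfunction factors'' and ``discarding torsion eigenvalues during the differencing'' --- addresses a difficulty that does not arise (the seminorm estimate, Theorem~\ref{BTZ factor is characteristic} and its ring analogue, is proved for arbitrary bounded functions and makes no use of eigenfunctions or of total ergodicity) and misses the one that does.
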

	\begin{remark}
		As mentioned above, rings of integers of number fields are good rings. Therefore, Theorem~\ref{rindep} recovers \cite[Theorem~3.1]{ab} as a special case (see also item 2 in their abstract).
	\end{remark}
	
	Let $R$ be a countable ring with characteristic zero, and let $(T_n)_{n \in R}$ be an ergodic $R$-action on a probability space $(X,\mathcal{B},\mu)$. We will now sketch a proof of the fact that (**) can only hold if the action $(T_n)_{n \in R}$ is totally ergodic (it is obvious that ergodicity is necessary).
	
	Suppose to the contrary that (**) holds for the ergodic action $(T_n)_{n \in R}$, and assume that it is not totally ergodic. By assumption, we can find a finite index additive subgroup $J \subseteq R$, and a non-constant $f \in L^2(\mu)$ such that $T_nf=f$ for all $n \in J$. Since this $f$ has finite orbit in $L^2(\mu)$ under $(T_n)_{n \in R}$, we can write
	\[ f=\sum_{j \in \mathbb{N}} c_j f_j \quad \text{in } L^2(\mu) \]
	for some complex numbers $c_j$, not all zero, some functions $f_j: X \to \mathbb{S}^1$ with $T_nf_j=
	\chi_j(n)f_j$, where $\chi_j : R \to \mathbb{S}^1$ are distinct characters. It is easy to check that the $f_j$ are an orthogonal family, which implies that for each $j \in \mathbb{N}$, $\chi(n)=1$ for all $n \in J$. Moreover, since $J$ is of finite index, one can show that each $\chi_j$ only takes finitely many values, which are necessarily roots of unity (that may depend on $j$).
	
	Taking a suitable $f_j$, we see there exists a non-constant function $g: X \to \mathbb{S}^1$ such that $T_ng=
	\chi(n)g$, for some character $\chi : R \to \mathbb{S}^1$ taking values on finitely many roots of unity. Note that since $R$ has zero characteristic, we can find a natural number $a \in \mathbb{N}$ such that $\chi(an)=1$ for all $n \in R$. Finally, consider the polynomial $p(n)=an$ (which is non-trivial, since $R$ has characteristic zero). We have assumed that (**) holds; hence, from just the $k=1$ situation of (**), it follows that for any F\o lner sequence $(\Phi_N)$ in $R$ we have
	\[ \lim_{N \to \infty} \mathbb{E}_{n \in \Phi_N} T_{an}g=\lim_{N \to \infty} \mathbb{E}_{n \in \Phi_N} \chi(an)g=g.\]
	However, $\int_X g \ d\mu=0$, a contradiction.
	
	\subsection{Outline of the paper}
	This article is organized as follows. In Section~\ref{sec: 2} we introduce notation and recall some general facts in ergodic theory. In Section~\ref{sec: 3}, after some preparation, we prove Theorem~\ref{framainthm}. In Section~\ref{sec: 4}, we obtain Theorem~\ref{findep} via Theorem~\ref{framainthm}, which requires us to verify two conditions are met. We do this in Theorems~\ref{characterequidistribution}~and~\ref{BTZ factor is characteristic}, and the section ends with a few corollaries in combinatorics and topological dynamics. Lastly, Section~\ref{sec: 5} is devoted to the proof of Theorem~\ref{rindep} via Theorem~\ref{framainthm}.
	
	\section{Preliminaries}\label{sec: 2}
	\subsection{Notation}\label{sec: 2.1}
	We put $\mathbb{N}=\{1,2,3,\dots\}$. Let $G$ be a second-countable locally compact abelian group\footnote{It is classical that such groups possess F\o lner sequences.}. For any subset $A \subseteq G$ with $0<m(A)<\infty$ and any bounded function $a: G \to \mathbb{C}$ we write
	\[ \mathbb{E}_{n \in A} \ a(n) :=\frac{1}{m(A)}\int_A a(n) \ dm(n),\]
	where $m$ is a Haar measure on $G$.
	\begin{definition}
		Let $G$ be a second-countable locally compact abelian group with a Haar measure $m$.\footnote{Once the Haar measure $m$ has been fixed, we use $|A|$ and $m(A)$ interchangeably for measurable $A \subseteq G$.} We say that a sequence $(\Phi_N)$ of nonempty subsets of $G$ is a F\o lner sequence in $G$ if, for every $g \in G$, we have
		\[ \lim_{N \to \infty} \frac{m(\Phi_N \Delta (g+\Phi_N))}{m(\Phi_N)}=0.\]
		(Note that the definition is independent of the choice of Haar measure.)
	\end{definition}
	We use the notation $o_{M; N \to \infty}(1)$ to denote a quantity that, for fixed $M \in \mathbb{N}$, tends to $0$ as $N \to \infty$.
	
	Let $G$ be a second-countable locally compact abelian group. A \emph{measure-preserving system}, or simply \emph{a system}, is a quadruple $(X,\mathcal{B},\mu,(T_g)_{g \in G})$ for which $(X,\mathcal{B},\mu)$ is a probability space\footnote{All our spaces are standard, i.e. measurably isomorphic to an interval with Lebesgue measure together with (at most) countably many disjoint atoms.}, the map $T_g: X \to X$ is an invertible measure-preserving transformation for each $g \in G$, and $T_g\circ T_h=T_{gh}$ for all $g, h \in G$. We typically omit the $\sigma$-algebra $\mathcal{B}$ and write $(X,\mu,(T_g)_{g \in G})$. As usual, for $f \in L^{\infty}(\mu)$ and $g \in G$, we denote by $T_gf$ the function $f \circ T_g$.
	
	We say that $(X,\mu,(T_g)_{g \in G})$ is ergodic if the only functions $f \in L^1(\mu)$ satisfying $T_gf(x)=f(x)$ for $\mu$-a.e. $x \in X$ for all $g \in G$ are the constant functions.
	\subsection{Background in ergodic theory}
	We will need two variants of the van der Corput trick.
	\begin{lemma}\label{vdc}
		Let $\{x_g\}_{g\in G}$ be a bounded family of elements of a Hilbert space indexed by a second-countable locally compact abelian group $G$, and let $(\Phi_N)$ be a F\o lner sequence in $G$. Then, for all $M, N \in \mathbb{N}$,
		\[ \norm{\mathbb{E}_{n \in \Phi_N} x_n}^2 \leq \mathbb{E}_{h' \in \Phi_M}\mathbb{E}_{h \in \Phi_M-h'} \mathbb{E}_{n \in \Phi_N+h'} \langle x_{n+h}, x_n \rangle +o_{M;N \to \infty}(1).\]
	\end{lemma}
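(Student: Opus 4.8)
The plan is to run the classical van der Corput argument, taking care only that averages are now Haar integrals and that translations are controlled by translation invariance of the Haar measure $m$ and the F\o lner property. Let $C := \sup_{g \in G}\norm{x_g} < \infty$ and, for fixed $M$, introduce the auxiliary bounded vector-valued function $u_n := \mathbb{E}_{h' \in \Phi_M} x_{n+h'}$. Since $\{x_g\}$ is bounded and measurable and all the sets involved have finite measure, every Bochner integral and Fubini swap below is legitimate.

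First I would replace $\mathbb{E}_{n \in \Phi_N} x_n$ by $\mathbb{E}_{n \in \Phi_N} u_n$. By Fubini and the substitution $n \mapsto n + h'$ in the inner integral,
\[ \mathbb{E}_{n \in \Phi_N} u_n = \mathbb{E}_{h' \in \Phi_M}\mathbb{E}_{n \in \Phi_N} x_{n+h'} = \mathbb{E}_{h' \in \Phi_M}\mathbb{E}_{n \in \Phi_N + h'} x_n, \]
and since the two inner integrals agree on $\Phi_N \cap (\Phi_N + h')$,
\[ \norm{\mathbb{E}_{n \in \Phi_N} x_n - \mathbb{E}_{n \in \Phi_N} u_n} \le \mathbb{E}_{h' \in \Phi_M}\norm{\mathbb{E}_{n \in \Phi_N} x_n - \mathbb{E}_{n \in \Phi_N + h'} x_n} \le C\,\mathbb{E}_{h' \in \Phi_M}\frac{m(\Phi_N \Delta (\Phi_N + h'))}{m(\Phi_N)}. \]
For each fixed $h'$ the integrand tends to $0$ as $N \to \infty$ by the F\o lner property and is bounded by $2C$; as $\Phi_M$ has finite measure, bounded convergence shows the whole right-hand side is $o_{M; N \to \infty}(1)$.

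Next, using $\norm{\mathbb{E}_{n \in \Phi_N} u_n} \le C$ together with the previous estimate, one gets $\norm{\mathbb{E}_{n \in \Phi_N} x_n}^2 \le \norm{\mathbb{E}_{n \in \Phi_N} u_n}^2 + o_{M; N \to \infty}(1)$. A Cauchy--Schwarz (Jensen) inequality in the $n$-average yields $\norm{\mathbb{E}_{n \in \Phi_N} u_n}^2 \le \mathbb{E}_{n \in \Phi_N}\norm{u_n}^2$, and expanding the square gives
\[ \mathbb{E}_{n \in \Phi_N}\norm{u_n}^2 = \mathbb{E}_{n \in \Phi_N}\mathbb{E}_{h_1 \in \Phi_M}\mathbb{E}_{h_2 \in \Phi_M}\langle x_{n+h_1}, x_{n+h_2}\rangle. \]

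It remains to identify this symmetric double average with the asserted right-hand side; this bookkeeping is the only genuinely fiddly point. Starting from $\mathbb{E}_{h' \in \Phi_M}\mathbb{E}_{h \in \Phi_M - h'}\mathbb{E}_{n \in \Phi_N + h'}\langle x_{n+h}, x_n\rangle$, the substitution $n \mapsto n + h'$ (valid since $m(\Phi_N + h') = m(\Phi_N)$) turns the inner integral into $\mathbb{E}_{n \in \Phi_N}\langle x_{n + h' + h}, x_{n + h'}\rangle$, and the substitution $h \mapsto h_1 - h'$ (valid since $m(\Phi_M - h') = m(\Phi_M)$) converts $\mathbb{E}_{h \in \Phi_M - h'}$ into $\mathbb{E}_{h_1 \in \Phi_M}$ while replacing $x_{n+h'+h}$ by $x_{n+h_1}$; writing $h_2 := h'$ and applying Fubini recovers exactly the double average displayed above. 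Combining the three inequalities completes the proof. The only obstacle worth flagging is applying translation invariance correctly in these changes of variables and ensuring the error term is uniform in $h' \in \Phi_M$ for fixed $M$; both are routine given that $m$ is Haar and $\Phi_M$ has finite measure.
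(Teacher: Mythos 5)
Your proposal is correct and follows essentially the same route as the paper's proof: replace $\mathbb{E}_{n\in\Phi_N}x_n$ by the smoothed average $\mathbb{E}_{m\in\Phi_M}\mathbb{E}_{n\in\Phi_N}x_{n+m}$ at the cost of an $o_{M;N\to\infty}(1)$ error controlled by the F\o lner property, apply Jensen to pass the norm inside the $n$-average, expand the square, and change variables with Fubini to match the stated right-hand side. The only difference is that you spell out the final change of variables, which the paper leaves to the reader.
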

	\begin{proof}
		We slightly adapt the proof of \cite[Lemma~1.8]{abb}. Let $M \in \mathbb{N}$ and notice that
		\[ \norm{\mathbb{E}_{n \in \Phi_N} x_n}^2 \leq \norm{\mathbb{E}_{m \in \Phi_M}\mathbb{E}_{n \in \Phi_N} x_{n+m}}^2+\sup_{g \in G} \norm{x_g}^2 \mathbb{E}_{m \in \Phi_M} \frac{|\Phi_N \Delta (\Phi_N+m)|}{|\Phi_N|}.\]
		By Jensen's inequality, the right-hand side is bounded above by
		\begin{multline*} \mathbb{E}_{n \in \Phi_N}\norm{\mathbb{E}_{m \in \Phi_M} x_{n+m}}^2+o_{M;N \to \infty}(1) \\ =\mathbb{E}_{n \in \Phi_N}\mathbb{E}_{m_1 \in \Phi_M}\mathbb{E}_{m_2 \in \Phi_M} \langle x_{n+m_1}, x_{n+m_2} \rangle+o_{M;N \to \infty}(1).
		\end{multline*}
		To obtain the desired inequality, simply change variables and use Fubini's theorem (we can do this because the Haar measure is $\sigma$-finite) to exchange the \break expectations.
	\end{proof}
	The following variant comes from \cite[Lemma~4]{leibmanconvergence}. We note that the proof given in \cite{leibmanconvergence} applies to countable abelian groups $G$ without changes.
	\begin{lemma}\label{2D trick} Let $\{x_g\}_{g\in G}$ be a bounded family of elements of a Hilbert space indexed by a countable abelian group $G$, and let $(\Phi_N)$ be a F\o lner sequence in $G$.
		\begin{enumerate}
			\item For any finite set $S \subset G$,
			\begin{equation*}
				\limsup_{N \to \infty} \norm{\mathbb{E}_{g\in\Phi_N} x_g}^2 \leq \limsup_{N \to \infty} \mathbb{E}_{(h,h')\in S^2} \mathbb{E}_{g\in\Phi_N} \left\langle x_{g+h},x_{g+h'}\right\rangle \in \mathbb{R}.
			\end{equation*}
			\item There exists a F\o lner sequence $(\Theta_M)$ in $G^3$ such that
			\begin{equation*}
				\limsup_{N\to\infty} \norm{\mathbb{E}_{g\in\Phi_N} x_g}^2 \leq \limsup_{M\to\infty} \mathbb{E}_{(g,h,h')\in\Theta_M} \left\langle x_{g+h},x_{g+h'}\right\rangle \in \mathbb{R}.
			\end{equation*}
		\end{enumerate}
	\end{lemma}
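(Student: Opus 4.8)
The plan is to treat the two parts in sequence, deriving Part (2) from Part (1) by a diagonal argument. Throughout I would use that $G$ is countable, so Haar measure is counting measure and each $\Phi_N$ (and each finite $S$) is a genuine finite set; hence every $\mathbb{E}_{g \in \Phi_N}$ is an honest finite average and convexity arguments are elementary.

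For Part (1), I would first exploit the F\o lner property to shift the averaging set by elements of $S$ at negligible cost. Fixing the finite set $S$, for each $h \in S$ one has $\mathbb{E}_{g \in \Phi_N} x_{g+h} = \mathbb{E}_{g \in \Phi_N} x_g + o_{N \to \infty}(1)$, since the two averages differ only over $(\Phi_N + h) \Delta \Phi_N$, whose relative size tends to $0$. Averaging over the finite set $h \in S$ gives $\mathbb{E}_{g \in \Phi_N} x_g = \mathbb{E}_{g \in \Phi_N} \mathbb{E}_{h \in S} x_{g+h} + o_{N \to \infty}(1)$. Taking norms squared, applying Jensen's inequality to move the square inside the average over $g \in \Phi_N$, and expanding the inner square yields
\[ \norm{\mathbb{E}_{g \in \Phi_N} x_g}^2 \leq \mathbb{E}_{g \in \Phi_N} \mathbb{E}_{(h,h') \in S^2} \langle x_{g+h}, x_{g+h'}\rangle + o_{N \to \infty}(1). \]
Taking $\limsup_N$ and using Fubini to interchange the finite average over $S^2$ with the average over $\Phi_N$ gives the claimed inequality. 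The right-hand side lies in $\mathbb{R}$ because $\mathbb{E}_{(h,h') \in S^2}\langle x_{g+h}, x_{g+h'}\rangle = \norm{\mathbb{E}_{h \in S} x_{g+h}}^2 \geq 0$, so the quantity inside the $\limsup$ is in fact a nonnegative real.

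For Part (2), the idea is to absorb the two layers of averaging from Part (1) into a single F\o lner average on $G^3$. I would fix a F\o lner sequence $(S_M)$ of finite subsets of $G$ (these exist, $G$ being countable abelian, hence amenable) and set $L := \limsup_N \norm{\mathbb{E}_{g \in \Phi_N} x_g}^2$. Applying Part (1) with $S = S_M$ and using the definition of $\limsup$, I would select indices $N_M \to \infty$ so that $\mathbb{E}_{(h,h') \in S_M^2} \mathbb{E}_{g \in \Phi_{N_M}} \langle x_{g+h}, x_{g+h'}\rangle \geq L - 1/M$. Defining $\Theta_M := \Phi_{N_M} \times S_M \times S_M \subseteq G^3$, with coordinates $(g,h,h')$, Fubini gives $\mathbb{E}_{(g,h,h') \in \Theta_M} \langle x_{g+h}, x_{g+h'}\rangle = \mathbb{E}_{(h,h') \in S_M^2}\mathbb{E}_{g \in \Phi_{N_M}}\langle x_{g+h}, x_{g+h'}\rangle \geq L - 1/M$, whence $\limsup_M \mathbb{E}_{(g,h,h') \in \Theta_M}\langle x_{g+h}, x_{g+h'}\rangle \geq L$. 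As before each such average equals $\mathbb{E}_{g \in \Phi_{N_M}} \norm{\mathbb{E}_{h \in S_M} x_{g+h}}^2 \geq 0$, so it is real.

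It remains to verify that $(\Theta_M)$ is a F\o lner sequence in $G^3$, which I expect to be the only genuinely technical point. This rests on the general fact that a product of F\o lner sequences is F\o lner in the product group: for $(a,b,c) \in G^3$ we have $(a,b,c) + \Theta_M = (\Phi_{N_M}+a) \times (S_M+b) \times (S_M+c)$, and the relative symmetric difference of two product sets is bounded by the sum of the relative symmetric differences in each coordinate. Since $N_M \to \infty$ makes the $g$-coordinate asymptotically invariant and $(S_M)$ is F\o lner in each of the remaining coordinates, the total tends to $0$. The main obstacle is thus organizational rather than conceptual: carrying out the diagonal selection of $N_M$ so that the quantitative bound $L - 1/M$ and the F\o lner property of $\Theta_M$ are secured simultaneously. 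This argument is the countable-abelian-group incarnation of the classical van der Corput computation of \cite[Lemma~4]{leibmanconvergence}.
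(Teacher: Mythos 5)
Your proof is correct: both parts go through, the Jensen/Fubini manipulations in Part (1) are valid for finite averages, and the diagonal selection of $N_M$ together with the fact that a product of F\o lner sequences is F\o lner in $G^3$ settles Part (2). The paper does not write out a proof but defers to \cite[Lemma~4]{leibmanconvergence}, and your argument is exactly that standard van der Corput computation, so the approaches coincide.
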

	We will also make use of the following version of the mean ergodic theorem; see \cite[Theorem~4.15]{bergdiophantine} for a reference.
	\begin{theorem}[Mean ergodic theorem for countable abelian groups]\label{meanergthm} Let $G$ be a countable abelian group acting on a Hilbert space $\mathcal{H}$ by unitary transformations $U_g : \mathcal{H} \to \mathcal{H}$. Then, for any $f \in \mathcal{H}$ and any F\o lner sequence $(\Phi_N)$ of $G$, we have $\lim_{N\to\infty} \mathbb{E}_{g\in\Phi_N} U_g f = Pf$ in norm, where $P$ is the orthogonal projection onto the invariant subspace $\{ f' \in \mathcal{H} : U_g f' = f' \text{ for all } g \in G\}$.
	\end{theorem}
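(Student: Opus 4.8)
The plan is to follow the classical von Neumann decomposition argument, adapted to averages over a F\o lner sequence. Write the group additively, so that $U_g^{*} = U_{-g}$ for each $g \in G$, and recall that for a countable group the Haar measure is counting measure, whence $\mathbb{E}_{g \in \Phi_N} U_g f = \frac{1}{|\Phi_N|} \sum_{g \in \Phi_N} U_g f$. Let $\mathcal{I} = \{f' \in \mathcal{H} : U_g f' = f' \text{ for all } g \in G\}$ be the invariant subspace and let $\mathcal{B} = \mathrm{span}\{U_g h - h : g \in G,\ h \in \mathcal{H}\}$ be the space of coboundaries. The first step is to establish the orthogonal splitting $\mathcal{H} = \mathcal{I} \oplus \overline{\mathcal{B}}$, equivalently $\mathcal{I}^{\perp} = \overline{\mathcal{B}}$. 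The inclusion $\overline{\mathcal{B}} \subseteq \mathcal{I}^{\perp}$ is immediate: for $f' \in \mathcal{I}$ one computes $\langle U_g h - h, f' \rangle = \langle h, U_{-g} f' - f' \rangle = 0$. For the reverse inclusion I would show $\mathcal{B}^{\perp} \subseteq \mathcal{I}$: if $f'$ is orthogonal to every coboundary, then taking $h = f'$ gives $\langle U_g f', f' \rangle = \|f'\|^2$, whence $\|U_g f' - f'\|^2 = 2\|f'\|^2 - 2\,\mathrm{Re}\langle U_g f', f' \rangle = 0$, so that $f' \in \mathcal{I}$; passing to orthogonal complements of the closed subspaces yields $\mathcal{I}^{\perp} \subseteq \overline{\mathcal{B}}$. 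Here the unitarity of $U_g$ is used essentially.

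With the decomposition in hand, I would verify convergence on each summand and then pass to the closure. On $\mathcal{I}$ the averages are constant: $\mathbb{E}_{g \in \Phi_N} U_g f' = f' = P f'$. On a single coboundary $f = U_{g_0} h - h$, telescoping gives
\[ \mathbb{E}_{g \in \Phi_N} U_g f = \frac{1}{|\Phi_N|}\left( \sum_{g \in \Phi_N + g_0} U_g h - \sum_{g \in \Phi_N} U_g h \right), \]
so after cancelling the common terms the norm is at most $\|h\| \cdot \frac{|(\Phi_N + g_0)\, \Delta\, \Phi_N|}{|\Phi_N|}$, which tends to $0$ by the F\o lner property; since $P f = 0$ for $f \in \mathcal{B} \subseteq \mathcal{I}^{\perp}$, this is the desired limit. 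By linearity the same holds for every $f \in \mathcal{B}$.

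Finally, for $f \in \overline{\mathcal{B}}$ I would use the uniform contraction $\|\mathbb{E}_{g \in \Phi_N} U_g u\| \leq \mathbb{E}_{g \in \Phi_N} \|U_g u\| = \|u\|$, valid for all $u \in \mathcal{H}$ and all $N$: given $\varepsilon > 0$, choose $f_0 \in \mathcal{B}$ with $\|f - f_0\| < \varepsilon$, so that $\limsup_N \|\mathbb{E}_{g \in \Phi_N} U_g f\| \leq \|f - f_0\| + \limsup_N \|\mathbb{E}_{g \in \Phi_N} U_g f_0\| < \varepsilon$, and let $\varepsilon \to 0$. For general $f$, decompose $f = Pf + (f - Pf)$ with $Pf \in \mathcal{I}$ and $f - Pf \in \mathcal{I}^{\perp} = \overline{\mathcal{B}}$, and add the two limits. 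The argument is entirely soft; the only quantitative input is the telescoping estimate for coboundaries, and the sole places where the hypotheses are genuinely used are the unitarity (for the orthogonal splitting) and the F\o lner property (for that estimate). I expect no real obstacle beyond carefully bookkeeping the two-sided inclusion in the splitting.
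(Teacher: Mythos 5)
Your proposal is correct, and it is the standard argument: the paper itself gives no proof of this statement, deferring to \cite[Theorem~4.15]{bergdiophantine}, where the proof is exactly the classical von Neumann decomposition you use --- the orthogonal splitting $\mathcal{H}=\mathcal{I}\oplus\overline{\mathcal{B}}$ via unitarity, the telescoping estimate $\|h\|\cdot|(\Phi_N+g_0)\,\Delta\,\Phi_N|/|\Phi_N|\to 0$ on coboundaries from the F\o lner property, and extension to $\overline{\mathcal{B}}$ by the uniform contraction bound. All steps check out, including the two-sided inclusion $\mathcal{I}^{\perp}=\overline{\mathcal{B}}$, so there is nothing to add.
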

	
	\subsection{Joint ergodicity}
	
	\begin{definition}
		Let $G$ be a second-countable locally compact abelian group. Let $(\Phi_N)$ be a F\o lner sequence. We say that a collection of mappings $a_1,\dots,a_{\ell} : G \to G$ is
		\begin{itemize}
			\item[(i)] \emph{jointly ergodic (along $(\Phi_N)$) for the system} $(X,\mu,(T_g)_{g \in G})$ if, for all functions $f_1,\dots,f_{\ell} \in L^{\infty}(\mu)$, we have
			\begin{equation*}
				\lim_{N \to \infty} \mathbb{E}_{n \in \Phi_N} T_{a_1(n)}f_1\cdots T_{a_{\ell}(n)}f_{\ell} \ = \ \prod_{j=1}^{\ell} \int_X f_j \ d\mu,
			\end{equation*}
			where convergence takes place in $L^2(\mu)$.
			\item[(ii)] \emph{jointly ergodic (along $(\Phi_N)$)} if it is jointly ergodic for every ergodic system.
		\end{itemize}
	\end{definition}
	\begin{remark}
		When the group $G$ is countable, it is more convenient to think of the mappings $f: G \to G$, as well as the functions $h: G \to \mathbb{C}$, as sequences (in analogy with the terminology in \cite{frajointerg}).
	\end{remark}
	\begin{definition}
		Let $G$ be a second-countable locally compact abelian group. If $(X,\mu,(T_g)_{g \in G})$ is a system, we let
		\[ \mathrm{Spec}((T_g)_{g \in G}):=\{ \alpha\in\mathrm{Hom}(G,\mathbb{T}) : T_gf=e(\alpha(g))f \textrm{ for some non-zero } f \in L^2(\mu)\},\]
		where as usual $e(x) := \exp(2\pi i x)$ for real $x$, and we let
		\[ \mathcal{E}((T_g)_{g \in G}) := \{ f \in L^\infty(\mu) : |f| = 1 \text{ and } T_gf = e(\alpha(g))f  \text{ for some } \alpha \in \mathrm{Hom}(G,\mathbb{T})\}  \]
		denote the eigenfunctions of modulus 1.
	\end{definition}
	When convenient, we also write $T_gf=\chi(g)f$ for a character $\chi \in \hat{G}$ and say that $\chi$ is in the spectrum of $G$.
	
	The seminorms $\nnorm{\cdot}_s$, which appear in the next definition, are defined in Section~\ref{sec: 3.1}.
	\begin{definition}
		Let $G$ be a second-countable locally compact abelian group. We say that a collection of mappings $a_1,\dots,a_{\ell}: G \to G$ is:
		\begin{itemize}
			\item[(i)] \emph{good for seminorm estimates for the system} $(X,\mu,(T_g)_{g \in G})$ \emph{(along} $(\Phi_N)$\emph{)} if there exists $s \in \mathbb{N}$ such that whenever $f_1,\dots,f_{\ell} \in L^{\infty}(\mu)$ satisfies $\nnorm{f_l}_s=0$ for some $l \in \{1,\dots,\ell\}$ and $f_{l+1},\dots, f_\ell \in \mathcal{E}((T_g)_{g \in G})$, then
			\[ \lim_{N \to \infty} \mathbb{E}_{n \in \Phi_N} T_{a_1(n)}f_1\cdots T_{a_{\ell}(n)}f_{\ell}=0,\]
			where convergence takes place in $L^2(\mu)$. It is \emph{good for seminorm estimates} if it is good for seminorm estimates for every ergodic system.
			\item[(ii)] \emph{good for equidistribution for the system} $(X,\mu,(T_g)_{g \in G})$ \emph{(along} $(\Phi_N)$\emph{)} if for all $\alpha_1,\dots,\alpha_{\ell} \in \mathrm{Spec}((T_g)_{g \in G})$, not all of them trivial, we have
			\begin{equation}\label{equidistribution1}
				\lim_{N \to \infty} \mathbb{E}_{n \in \Phi_N} e(\alpha_1(a_1(n))+\dots+\alpha_{\ell}(a_{\ell}(n)))=0.
			\end{equation}
			It is \emph{good for equidistribution (along} $(\Phi_N)$\emph{)} if it is good for equidistribution (along $\Phi_N$) for every system, or equivalently, if \ref{equidistribution1} holds for all homomorphisms $\alpha_1,\dots,\alpha_{\ell}: G \to \mathbb{T}$, not all of them trivial.
		\end{itemize}
	\end{definition}
	\section{Sufficient conditions for joint ergodicity}\label{sec: 3}
	We first define some notation and prove some basic results about seminorms, following the work of \cite{frajointerg}.
	\subsection{Ergodic seminorms and related results} \label{sec: 3.1}
	\begin{definition}
		Let $G$ be a second-countable locally compact abelian group. Let $(X,\mu,(T_g)_{g \in G})$ be a system and $f \in L^{\infty}(\mu)$. Let $\underline{n}:=(n_1,\dots,n_s) \in G^s$, $\underline{n}':=(n_1',\dots,n_k') \in G^k$, $\varepsilon:=(\varepsilon_1,\dots,\varepsilon_s) \in \{0,1\}^s$, and $z \in \mathbb{C}$. We put
		\begin{itemize}
			\item[(i)] $\varepsilon \cdot \underline{n}:=\sum_{i=1}^s \varepsilon_i n_i$.
			\item[(ii)] $|\underline{n}|:=\sum_{i=1}^s n_i$ and $|\underline{\ep}|:=\sum_{i=1}^s \ep_i.$
			\item[(iii)] $\mathcal{C}^lz:=z$ if $l$ is even and $\mathcal{C}^lz:=\bar{z}$ if $l$ is odd.
			\item[(iv)] $\underline{n}^{\varepsilon}:=(n_1^{\varepsilon_1},\dots,n_s^{\varepsilon_s})$, where $n_j^0=n_j$, $n_j^1=n_j'$ for $j=1,\dots,s$. Note that this notation will always be used in a context where $\underline{n}'$ is understood.
			\item[(v)] $\Delta_{n_1}\cdots\Delta_{n_s}f:=\prod_{\varepsilon \in \{0,1\}^s} \mathcal{C}^{|\varepsilon|} T_{\varepsilon \cdot \underline{n}}f$. For example, $\Delta_n f = f \cdot T_n \overline{f}$.
			\item[(vi)] $\Delta_{\underline{n}}f:=\Delta_{n_1}\dots\Delta_{n_s}f$.
		\end{itemize}
	\end{definition}
	For a given system $(X,\mu,(T_g)_{g \in G})$, in complete analogy to the $\mathbb{Z}$ case, or indeed the $\mathbb{Z}^d$ case, the so-called Gowers--Host--Kra seminorms $\nnorm{\cdot}_s$ for $s \in \mathbb{N}$ can be defined. We are going to recall the inductive definition next, starting with $\nnorm{\cdot}_0$, which we define as follows for $f \in L^{\infty}(\mu)$:
	\[ \nnorm{f}_0:=\int_X f \ d\mu,\]
	and for $s \geq 0$ we put
	\begin{equation}\label{seminormdefinition}
		\nnorm{f}_{s+1}^{2^{s+1}}:=\lim_{N \to \infty} \mathbb{E}_{n \in \Phi_N} \nnorm{\Delta_nf}_s^{2^s},
	\end{equation}
	where $(\Phi_N)$ is any F\o lner sequence in $G$.	(Note that with our definition, $\nnorm{\cdot}_0$ is not a seminorm, but $\nnorm{\cdot}_s$ is for $s \geq 1$).
	The fact that the limit in \ref{seminormdefinition} exists can be shown using the mean ergodic theorem for $G$-actions repeatedly. Furthermore, for any $f \in L^{\infty}(\mu)$, the mean ergodic theorem and the Cauchy--Schwarz inequality imply that $\nnorm{f}_s \leq \nnorm{f}_{s+1}$ for all $s \geq 1$.

	If one repeatedly applies the inductive definition \ref{seminormdefinition}, one may express $\nnorm{f}_s^{2^s}$ as an iterated limit. By \cite[Lemmas~1.1, 1.2]{blcubicaverages} and \cite[Theorem~1.1]{zorinkranich}, such an iterated limit may be combined in a suitable way without changing the value of the limit. To give two examples, one may obtain
	\begin{equation*}
		\nnorm{f}_s^{2^s}=\lim_{N \to \infty} \mathbb{E}_{\underline{n} \in \Phi_N^s} \int_X \Delta_{\underline{n}} f \ d\mu
	\end{equation*}
	and, if one stops earlier,
	\begin{equation}\label{2seminormearlystop}
		\nnorm{f}_s^{2^s}=\lim_{N \to \infty} \mathbb{E}_{\underline{n} \in \Phi_N^{s-2}} \nnorm{\Delta_{\underline{n}}f}_2^4.
	\end{equation}
	An alternative way to define the seminorms is via the introduction of some measures on product spaces. Namely, given the system $(X,\mu,(T_n)_{n \in G})$, we consider the sequence of systems $(X^{[k]},\mu^{[k]},(T_n^{[k]})_{n \in G})$, for $k=0,1,2,\dots$. These are defined inductively as follows: $(X^{[0]},\mu^{[0]},(T_n^{[0]})_{n \in G}):=(X,\mu,(T_n)_{n \in G})$, and assuming that $(X^{[k]},\mu^{[k]},(T_n^{[k]})_{n \in G})$ has been defined, we let $\mathcal{I}_k$ be the $\sigma$-algebra of $(T_n^{[k]})_{n \in G}$-invariant subsets of $X^{[k]}$. Let $I_k$ be the factor associated to $\mathcal{I}_k$. Then, $(X^{[k+1]},\mu^{[k+1]})$ is defined as the relative product $(X^{[k]}, \mu^{[k]}) \times_{I_k} (X^{[k]},\mu^{[k]})$ with the natural action $(T_n^{[k+1]})_{n \in G}=(T_n^{[k]} \times T_n^{[k]})_{n \in G}$. In particular, for $F, H \in L^{\infty}(X^{[k]})$ we have
	\[ \int_{X^{[k+1]}} F \otimes H \ \ d\mu^{[k+1]}=\int_{X^{[k]}} \mathbb{E}[F|I_k]\cdot\mathbb{E}[H|I_k] \ d\mu^{[k]}.\]
	This allows for the following alternative definition of the ergodic seminorms. For $f \in L^{\infty}(\mu)$ and $s \geq 0$, we have\footnote{Given $f_{\varepsilon} \in L^{\infty}(\mu)$, we use the notation $\bigotimes_{\varepsilon \in \{0,1\}^s} f_{\varepsilon}:=\prod_{\varepsilon \in \{0,1\}^s} f_{\varepsilon}(x_{\varepsilon})$, in the product space $X^{2^s}$.}
	\[ \nnorm{f}_s:=\left(\int_{X^{[s]}} \bigotimes_{\varepsilon \in \{0,1\}^s} \mathcal{C}^{|\varepsilon|}f \ d\mu^{[s]}\right)^{\frac{1}{2^s}},\]
	where $\{0,1\}^{0}$ is taken to be the singleton $\{0\}$.

In the sequel, we will need the following lemma. We denote by $\mathcal{K}((T_g)_{g \in G})$ the Kronecker factor of $(X,\mu,(T_g)_{g \in G})$.
	\begin{lemma}\label{2seminormkronecker}
		Let $G$ be a second-countable locally compact abelian group. Let $(X,\mu,(T_g)_{g \in G})$ be an ergodic system and $f \in L^{\infty}(\mu)$. Let $\tilde{f}:=\mathbb{E}[f|\mathcal{K}((T_g)_{g \in G})]$. Then
		\begin{equation*}
			\nnorm{f}_2=\nnorm{\tilde{f}}_2.
		\end{equation*}
	\end{lemma}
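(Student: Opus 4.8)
The plan is to reduce $\nnorm{f}_2$ to a spectral (Wiener-type) quantity that manifestly depends only on the point spectrum, and then to recognize that point spectrum as living entirely on the Kronecker factor.

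First I would unwind the inductive definition \eqref{seminormdefinition} twice, using ergodicity. Since the system is ergodic, Theorem~\ref{meanergthm} gives $\lim_{N}\mathbb{E}_{n\in\Phi_N}T_n g=\int_X g\,d\mu$ for every $g\in L^2(\mu)$, whence $\nnorm{g}_1=|\int_X g\,d\mu|$ for all $g\in L^\infty(\mu)$. Applying this to $g=\Delta_n f=f\cdot T_n\bar f$ and noting $\int_X\Delta_n f\,d\mu=\langle f,T_n f\rangle$, the definition yields
\begin{equation*}
\nnorm{f}_2^4=\lim_{N\to\infty}\mathbb{E}_{n\in\Phi_N}\nnorm{\Delta_n f}_1^2=\lim_{N\to\infty}\mathbb{E}_{n\in\Phi_N}|\langle T_n f,f\rangle|^2 .
\end{equation*}
The same identity holds with $f$ replaced by $\tilde f$, so it suffices to show that this average is unchanged upon passing from $f$ to $\tilde f$.

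Next I would invoke the spectral theorem (SNAG) for the unitary $G$-representation $g\mapsto T_g$ on $L^2(\mu)$: there is a projection-valued measure $P$ on $\hat G$ with $T_g=\int_{\hat G}\chi(g)\,dP(\chi)$, and, setting $\sigma_f(B):=\|P(B)f\|^2$, we have $\langle T_n f,f\rangle=\int_{\hat G}\chi(n)\,d\sigma_f(\chi)$. Expanding the square gives
\begin{equation*}
|\langle T_n f,f\rangle|^2=\iint_{\hat G\times\hat G}(\chi\overline{\chi'})(n)\,d\sigma_f(\chi)\,d\sigma_f(\chi').
\end{equation*}
For a fixed pair the inner average $\mathbb{E}_{n\in\Phi_N}(\chi\overline{\chi'})(n)$ tends to $1$ if $\chi=\chi'$ and to $0$ otherwise: a nontrivial character $\psi$ on $G$ satisfies $\psi(g)\,\mathbb{E}_{n\in\Phi_N}\psi(n)=\mathbb{E}_{n\in g+\Phi_N}\psi(n)$, which differs from $\mathbb{E}_{n\in\Phi_N}\psi(n)$ by a quantity tending to $0$ by the F\o lner property, forcing the average to $0$ once $g$ is chosen with $\psi(g)\neq1$. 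Since the integrand is bounded by $1$ and $\sigma_f\times\sigma_f$ is finite, dominated convergence gives
\begin{equation*}
\nnorm{f}_2^4=\iint_{\hat G\times\hat G}\mathbf{1}_{\{\chi=\chi'\}}\,d\sigma_f(\chi)\,d\sigma_f(\chi')=\sum_{\chi\in\hat G}\sigma_f(\{\chi\})^2,
\end{equation*}
a sum over the at most countably many atoms of $\sigma_f$; this is the analogue of Wiener's lemma.

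Finally I would translate atoms into the Kronecker picture. The atoms of $\sigma_f$ are exactly the $\chi\in\mathrm{Spec}((T_g)_{g\in G})$ for which $f$ has a nonzero eigencomponent, with $\sigma_f(\{\chi\})=\|P(\{\chi\})f\|^2$, where $P(\{\chi\})$ projects onto the (one-dimensional, by ergodicity) $\chi$-eigenspace. Because $L^2(\mathcal K((T_g)_{g\in G}))$ is the closed span of the eigenfunctions, the range of each $P(\{\chi\})$ lies in it, so $P(\{\chi\})\tilde f=P(\{\chi\})f$; moreover $\sigma_{\tilde f}$ is purely atomic. Hence $\sigma_{\tilde f}(\{\chi\})=\sigma_f(\{\chi\})$ for every $\chi$, and the displayed formula applied to $\tilde f$ gives $\nnorm{\tilde f}_2^4=\sum_{\chi}\sigma_{\tilde f}(\{\chi\})^2=\sum_{\chi}\sigma_f(\{\chi\})^2=\nnorm{f}_2^4$. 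I expect the main obstacle to be this harmonic-analytic layer: setting up the spectral theorem and the vanishing of nontrivial-character averages along $(\Phi_N)$ in the locally compact setting, and cleanly identifying the pure-point part of $\sigma_f$ with the projection onto $\mathcal K$ (equivalently, that $f-\tilde f$ has purely continuous spectrum). One could instead avoid the spectral theorem by applying Theorem~\ref{meanergthm} to the product action on $X\times X$ to obtain $\nnorm{f}_2^4=\|\mathbb{E}[f\otimes\bar f\mid\mathcal I_{X\times X}]\|^2$, but identifying these invariants with the Kronecker factor requires the same structural input; everything else is the routine twofold unwinding of the seminorm.
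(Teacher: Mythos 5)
Your argument is correct, but it takes a genuinely different route from the paper's. The common starting point is the twofold unwinding under ergodicity, giving $\nnorm{f}_2^4=\lim_{N\to\infty}\mathbb{E}_{n\in\Phi_N}|\langle T_nf,f\rangle|^2$. From there the paper rewrites this as an ergodic average on the product system $X\times X$ applied to $f\otimes\bar f$ and invokes the Jacobs--de Leeuw--Glicksberg decomposition: the weak-mixing component of $f$ tensors into the weak-mixing component of the product and is annihilated by the average, leaving $\nnorm{f}_2=\nnorm{Pf}_2$ with $P$ the projection onto the compact component, which is then identified with $\mathbb{E}[\,\cdot\,|\mathcal{K}((T_g)_{g\in G})]$. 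You instead pass through the SNAG spectral theorem and prove a Wiener-type lemma, $\nnorm{f}_2^4=\sum_{\chi\in\hat G}\sigma_f(\{\chi\})^2$, using the off-diagonal cancellation $\mathbb{E}_{n\in\Phi_N}(\chi\overline{\chi'})(n)\to\mathbf{1}_{\{\chi=\chi'\}}$ along a F\o lner sequence, and then match atoms of $\sigma_f$ and $\sigma_{\tilde f}$. Both proofs ultimately rest on the same structural fact --- that the pure point part of the spectrum (equivalently, the JdLG compact component) is exactly $L^2(\mathcal{K}((T_g)_{g\in G}))$, spanned by unimodular eigenfunctions --- so neither avoids that input; they differ only in how the orthogonal complement is killed. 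Your version is more quantitative: it produces the explicit $\ell^4$ formula for $\nnorm{\cdot}_2$ in terms of Fourier coefficients along eigenfunctions, which would essentially yield Proposition~\ref{correlates1} as a byproduct. The cost is the harmonic-analytic overhead: SNAG requires the representation $g\mapsto T_g$ on $L^2(\mu)$ to be weakly measurable (hence strongly continuous, by separability), which is the same implicit regularity the paper already needs for its averages to make sense, and one should note that the diagonal of $\hat G\times\hat G$ is measurable because $\hat G$ is second countable. The paper's route is softer and more in keeping with its stated aim of eschewing spectral machinery, but your computation is sound.
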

	\begin{proof}
		Note that by the definition of the seminorms, we have
		\begin{multline}
			\label{deleeuw1}
			\nnorm{f}_2^4=\lim_{N_1 \to \infty} \mathbb{E}_{n_1 \in \Phi_{N_1}} \lim_{N_2 \to \infty} \mathbb{E}_{n_2 \in \Phi_{N_2}} \int_X f\cdot T_{n_1}\bar{f}\cdot T_{n_2}\bar{f}\cdot T_{n_1+n_2}f \ d\mu= \\
			\lim_{N_1 \to \infty} \mathbb{E}_{n_1 \in \Phi_{N_1}} \int_{X \times X} (f \otimes \bar{f})\cdot (T \times T)_{n_1}(\bar{f} \otimes f) \ d\mu \ d\mu.
		\end{multline}
		Using the Jacobs--de Leeuw--Glicksberg decomposition\footnote{There is a hint of this decomposition (when $G = \mathbb{Z}$) in the work of Koopman and von Neumann \cite{koopmanvonneumann}. See \cite[Section~16.3]{efhn} for a more modern treatment.}, we see by ergodicity that the last term in the equality \ref{deleeuw1} is $0$ in case $f \in L^2(\mu)_{\textrm{wm}}$ (as it implies that $f \otimes \bar{f}$ is in the weak mixing component of the product space). Thus, $\nnorm{f}_2^4=\nnorm{Pf}_2^4$, where $P$ is the projection onto the compact component of $L^2(\mu)$. It can be shown that $Pf=\tilde{f}$ $\mu$-a.e., so taking 4th roots completes the proof.
	\end{proof}
	This lemma allows us to generalize \cite[Proposition~3.1]{frajointerg} to derive the following proposition.
	\begin{proposition}\label{correlates1}
		Let $G$ be a second-countable locally compact abelian group. Let $(X,\mu,(T_g)_{g \in G})$ be an ergodic system and $f \in L^{\infty}(\mu)$ be a function with $\norm{f}_{L^{\infty}(\mu)} \leq 1$. Then
		\[ \nnorm{f}_2^4 \ \leq \  \sup_{\chi \in \mathcal{E}((T_g)_{g \in G})} \mathrm{Re} \left( \int_X f \cdot \chi \ d\mu\right).\]
	\end{proposition}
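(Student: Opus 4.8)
The plan is to reduce the estimate to a computation on the Kronecker factor, where $f$ becomes an explicit superposition of eigenfunctions, and then to bound the resulting expression by an elementary interpolation inequality.

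First I would invoke Lemma~\ref{2seminormkronecker} to replace $f$ by its conditional expectation $\tilde{f}:=\mathbb{E}[f\mid\mathcal{K}((T_g)_{g\in G})]$ onto the Kronecker factor. Since $\nnorm{f}_2=\nnorm{\tilde{f}}_2$, the left-hand side is unchanged; and since every $\chi\in\mathcal{E}((T_g)_{g\in G})$ is measurable with respect to $\mathcal{K}((T_g)_{g\in G})$, pulling it out of the conditional expectation gives $\int_X f\cdot\chi\,d\mu=\int_X\tilde{f}\cdot\chi\,d\mu$, so the right-hand side is unchanged as well. As conditional expectation is an $L^\infty$-contraction, $\norm{\tilde{f}}_{L^\infty(\mu)}\leq\norm{f}_{L^\infty(\mu)}\leq 1$. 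Hence it suffices to prove the inequality with $f$ replaced by the Kronecker-measurable function $\tilde{f}$.

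Second I would exploit the spectral structure of the ergodic Kronecker factor: its $L^2$-space admits an orthonormal basis $\{\phi_j\}$ of modulus-$1$ eigenfunctions, $T_g\phi_j=\chi_j(g)\phi_j$, with distinct characters $\chi_j$. (Ergodicity forces each eigenvalue to be simple, and distinct eigenfunctions are orthogonal because $\phi_j\bar{\phi_k}$ is a nonconstant eigenfunction whenever $\chi_j\neq\chi_k$.) Writing $\tilde{f}=\sum_j c_j\phi_j$ with $c_j=\int_X\tilde{f}\,\bar{\phi_j}\,d\mu$, orthonormality gives the autocorrelation
\[ \int_X\tilde{f}\cdot T_n\overline{\tilde{f}}\,d\mu=\sum_j|c_j|^2\,\overline{\chi_j(n)}. \]
Unwinding the inductive definition of the seminorms (using ergodicity and the mean ergodic theorem to get $\nnorm{g}_1^2=|\int_X g\,d\mu|^2$) yields the identity $\nnorm{\tilde{f}}_2^4=\lim_{N\to\infty}\mathbb{E}_{n\in\Phi_N}\big|\int_X\tilde{f}\cdot T_n\overline{\tilde{f}}\,d\mu\big|^2$. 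Expanding the square, averaging over $\Phi_N$, and applying the mean ergodic theorem to each character $\chi_k\bar{\chi_j}$ (whose F\o lner average is $1$ if $j=k$ and $0$ otherwise) gives
\[ \nnorm{\tilde{f}}_2^4=\sum_j|c_j|^4, \]
where the interchange of the limit with the double sum is justified by $\sum_{j,k}|c_j|^2|c_k|^2=\big(\sum_j|c_j|^2\big)^2\leq\norm{\tilde{f}}_{L^2(\mu)}^4<\infty$.

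Third I would finish with elementary estimates. Since $\sum_j|c_j|^2=\norm{\tilde{f}}_{L^2(\mu)}^2\leq 1$ and each $|c_j|\leq\norm{\tilde{f}}_{L^\infty(\mu)}\leq 1$, we get $\sum_j|c_j|^4\leq(\sup_j|c_j|^2)\sum_j|c_j|^2\leq\sup_j|c_j|^2\leq\sup_j|c_j|$. Finally, for each $j$ and each unimodular $\lambda$, the function $\lambda\bar{\phi_j}$ lies in $\mathcal{E}((T_g)_{g\in G})$ and $\int_X\tilde{f}\cdot\lambda\bar{\phi_j}\,d\mu=\lambda c_j$; choosing $\lambda=\overline{c_j}/|c_j|$ gives $\mathrm{Re}\big(\int_X\tilde{f}\cdot\lambda\bar{\phi_j}\,d\mu\big)=|c_j|$, so that $\sup_j|c_j|\leq\sup_{\chi\in\mathcal{E}((T_g)_{g\in G})}\mathrm{Re}\big(\int_X\tilde{f}\cdot\chi\,d\mu\big)$. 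Chaining these bounds (and the reduction of the first paragraph) yields the claim. The one step carrying real content is the identity $\nnorm{\tilde{f}}_2^4=\sum_j|c_j|^4$, which rests on the spectral description of the ergodic Kronecker factor in the general second-countable locally compact abelian setting---simplicity of eigenvalues, stability of eigenfunctions under products, and the vanishing of F\o lner averages of nontrivial characters---together with the justified interchange of limit and summation; the reduction and the concluding interpolation are routine.
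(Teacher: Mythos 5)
Your proposal is correct and follows essentially the same route as the paper: reduce to the Kronecker factor via Lemma~\ref{2seminormkronecker}, expand $\tilde{f}$ in the countable orthonormal basis of modulus-one eigenfunctions, establish $\nnorm{\tilde{f}}_2^4=\sum_j|c_j|^4$, and conclude by the interpolation bound $\sum_j|c_j|^4\leq\sup_j|c_j|\leq\sup_{\chi\in\mathcal{E}((T_g)_{g\in G})}\mathrm{Re}\bigl(\int_X f\cdot\chi\,d\mu\bigr)$. The only difference is that you write out in full the computation the paper delegates ``verbatim'' to \cite[Proposition~3.2]{frajointerg}, including the useful explicit justifications (vanishing of F\o lner averages of nontrivial characters, and the interchange of limit and sum).
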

	\begin{proof}
		By Lemma \ref{2seminormkronecker} we have
		\[ \nnorm{f}_2=\nnorm{\tilde{f}}_2.\]
		Since $(T_g)_{g \in G}$ is ergodic, $\mathcal{K}((T_g)_{g \in G})$ has an orthonormal basis of eigenfunctions of modulus one. Furthermore, since $(X,\mathcal{B},\mu)$ is standard (defined in footnote 8), such a base is countable; suppose it is $(\chi_j)_{j \in \mathbb{N}}$. The rest of the proof is as in \cite[Proposition~3.2]{frajointerg} verbatim, so we do not reproduce it here.
	\end{proof}
	
	We may use the previous proposition to deduce the following one.
	\begin{proposition}
		Let $G$ be a second-countable locally compact abelian group. Let $(\Phi_N)$ be a F\o lner sequence. Let $(X,\mu,(T_g)_{g \in G})$ be an ergodic system and $f \in L^{\infty}(\mu)$ be such that $\nnorm{f}_{s+2}>0$ for some $s \geq 0$. Then there exist $\chi_{\underline{n}} \in \mathcal{E}((T_g)_{g \in G})$, $\underline{n} \in G^s$, such that
		\[ \liminf_{N \to \infty} \mathbb{E}_{\underline{n} \in \Phi_N^s} \mathrm{Re}\left( \int_X \Delta_{\underline{n}}f \cdot \chi_{\underline{n}} \ d\mu\right)>0.\]
	\end{proposition}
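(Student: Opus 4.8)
The plan is to collapse the higher seminorm $\nnorm{f}_{s+2}$ down to the level-$2$ seminorm via the ``early stopping'' identity \ref{2seminormearlystop}, and then to run Proposition \ref{correlates1} inside the average over $\underline{n}$. First I would normalize so that $\norm{f}_{L^\infty(\mu)} \leq 1$; this is harmless, since replacing $f$ by a positive multiple scales both the hypothesis and the conclusion by a positive power of the constant and does not affect the choice of eigenfunctions $\chi_{\underline{n}}$. Applying \ref{2seminormearlystop} with $s+2$ in place of $s$ gives
\[ c \ := \ \nnorm{f}_{s+2}^{2^{s+2}} \ = \ \lim_{N \to \infty} \mathbb{E}_{\underline{n} \in \Phi_N^s} \nnorm{\Delta_{\underline{n}}f}_2^4 \ > \ 0, \]
where positivity is exactly the assumption $\nnorm{f}_{s+2}>0$. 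Since $\norm{f}_{L^\infty(\mu)} \leq 1$, each differenced function obeys $\norm{\Delta_{\underline{n}}f}_{L^\infty(\mu)} \leq 1$, so Proposition \ref{correlates1} applies to $\Delta_{\underline{n}}f$ for every $\underline{n} \in G^s$.

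The core of the argument is to pick, for each $\underline{n}$, an eigenfunction $\chi_{\underline{n}} \in \mathcal{E}((T_g)_{g \in G})$ realizing the estimate of Proposition \ref{correlates1} up to a fixed error. I would fix $\delta := c/2 > 0$ and, for each $\underline{n}$, use Proposition \ref{correlates1} to produce $\chi_{\underline{n}}$ with $\mathrm{Re}\left( \int_X \Delta_{\underline{n}}f \cdot \chi_{\underline{n}} \ d\mu \right) \geq \nnorm{\Delta_{\underline{n}}f}_2^4 - \delta$. Averaging this inequality over $\Phi_N^s$ and taking the limit inferior, the right-hand side tends to $c - \delta = c/2 > 0$, which is precisely the desired conclusion.

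The one genuinely new point, relative to the $\mathbb{Z}$-case of \cite{frajointerg} — where $\mathbb{E}_{\underline{n} \in \Phi_N^s}$ is a finite sum and $\chi_{\underline{n}}$ can be chosen freely at each $\underline{n}$ — is that here $G^s$ may be uncountable, so the average is an honest Haar integral and the selection $\underline{n} \mapsto \chi_{\underline{n}}$ must be made so that $\underline{n} \mapsto \mathrm{Re} \int_X \Delta_{\underline{n}}f \cdot \chi_{\underline{n}} \ d\mu$ is measurable; this measurable-selection step is the part I expect to require the most care. I would resolve it using the countable orthonormal basis $(\chi_j)_{j \in \mathbb{N}}$ of modulus-one eigenfunctions of $\mathcal{K}((T_g)_{g \in G})$ supplied by the proof of Proposition \ref{correlates1}. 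Writing $c_j(\underline{n}) := \int_X \Delta_{\underline{n}}f \cdot \overline{\chi_j} \ d\mu$, joint measurability of the action $(g,x) \mapsto T_g x$ makes each $\underline{n} \mapsto c_j(\underline{n})$ and $\underline{n} \mapsto \nnorm{\Delta_{\underline{n}}f}_2^4$ measurable, while the proof of Proposition \ref{correlates1} gives $\sup_j |c_j(\underline{n})| \geq \nnorm{\Delta_{\underline{n}}f}_2^4$. I would then let $j(\underline{n})$ be the least index $j$ with $|c_j(\underline{n})| \geq \nnorm{\Delta_{\underline{n}}f}_2^4 - \delta$ — a measurable function of $\underline{n}$, since it is assembled from countably many measurable conditions — and set $\chi_{\underline{n}} := \lambda(\underline{n}) \, \overline{\chi_{j(\underline{n})}}$, where the unit scalar $\lambda(\underline{n}) = \overline{c_{j(\underline{n})}(\underline{n})}\,|c_{j(\underline{n})}(\underline{n})|^{-1}$ is chosen to make the correlation real and nonnegative (and $\lambda(\underline{n}) := 1$ in the degenerate case $c_{j(\underline{n})}(\underline{n}) = 0$, which can only occur when $\nnorm{\Delta_{\underline{n}}f}_2^4 \leq \delta$). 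Then $\chi_{\underline{n}}$ is a modulus-one eigenfunction, $\int_X \Delta_{\underline{n}}f \cdot \chi_{\underline{n}} \ d\mu = |c_{j(\underline{n})}(\underline{n})|$ is real, nonnegative, measurable in $\underline{n}$, and at least $\nnorm{\Delta_{\underline{n}}f}_2^4 - \delta$, so the averaging step of the previous paragraph goes through and the proof concludes. Everything apart from this selection is a direct transcription of the integer case.
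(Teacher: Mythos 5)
Your argument is correct and follows essentially the same route as the paper: normalize $\norm{f}_{L^{\infty}(\mu)}\leq 1$, use \ref{2seminormearlystop} to rewrite the hypothesis as $\lim_N \mathbb{E}_{\underline{n}\in\Phi_N^s}\nnorm{\Delta_{\underline{n}}f}_2^4>0$, apply Proposition~\ref{correlates1} to each $\Delta_{\underline{n}}f$, and select a near-optimal eigenfunction $\chi_{\underline{n}}$ up to a fixed additive error before averaging (the paper uses $a/10$ where you use $c/2$; the difference is immaterial). Your explicit measurable-selection argument via the countable eigenbasis of the Kronecker factor is a welcome elaboration of a point the paper leaves implicit when it simply "chooses" $\chi_{\underline{n}}$ for each $\underline{n}\in G^s$.
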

	\begin{proof}
		Without loss of generality we will assume that $\norm{f}_{L^{\infty}(\mu)} \leq 1$.	We begin with an application of \ref{2seminormearlystop} to see that the initial hypothesis is equivalent to:
		\[ \lim_{N \to \infty} \mathbb{E}_{\underline{n} \in \Phi_N^s} \nnorm{\Delta_{\underline{n}}f}_2^4>0.\]
		A simple application of Proposition~\ref{correlates1} shows that for some $a>0$ we have
		\[ \liminf_{N \to \infty} \mathbb{E}_{\underline{n} \in \Phi_N^s} \sup_{\chi \in \mathcal{E}((T_g)_{g \in G})} \mathrm{Re} \left( \int_X \Delta_{\underline{n}}f \cdot \chi \ d\mu\right)\geq a. \]
		Now, for each $\underline{n} \in G^s$, choose $\chi_{\underline{n}} \in \mathcal{E}((T_g)_{g \in G})$ such that
		\[ \mathrm{Re} \left( \int_X \Delta_{\underline{n}}f \cdot \chi_{\underline{n}} \ d\mu\right) \geq
		\sup_{\chi \in \mathcal{E}((T_g)_{g \in G})} \mathrm{Re} \left( \int_X \Delta_{\underline{n}}f \cdot \chi \ d\mu\right)-\frac{a}{10},\]
		which completes the proof.
	\end{proof}
	Up to straightforward changes in notation we also have the following lemma (cf. \cite[Lemma~3.3]{frajointerg}):
	\begin{lemma}
		Let $G$ be a second-countable locally compact abelian group. Let $(\Phi_N)$ be a F\o lner sequence. Let $(X,\mu,(T_g)_{g \in G})$ be a system, and for any $s \in \mathbb{N}$ let $f_{\varepsilon} \in L^{\infty}(\mu)$ be bounded by $1$ for $\varepsilon \in \{0,1\}^s$, and $g_{\underline{n}} \in L^{\infty}(\mu)$ for $\underline{n} \in G^s$. Let $\underline{1}:=(1,\dots,1)$. Then for every $N \in \mathbb{N}$ we have
		\begin{multline*} \left| \mathbb{E}_{\underline{n} \in \Phi_N^s} \int_X \prod_{\varepsilon \in \{0,1\}^s} T_{\varepsilon \cdot \underline{n}} f_{\varepsilon} \cdot g_{\underline{n}} \ d\mu\right|^{2^s} \\ \leq \mathbb{E}_{\underline{n},\underline{n'} \in \Phi_N^s} \int_X \Delta_{\underline{n}-\underline{n}'} f_{\underline{1}} \cdot T_{-|\underline{n}|} \left( \prod_{\varepsilon \in \{0,1\}^s} \mathcal{C}^{|\varepsilon|} g_{\underline{n}^{\varepsilon}}\right) \ d\mu.\end{multline*}
	\end{lemma}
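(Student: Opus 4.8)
The plan is to establish the inequality by applying the Cauchy--Schwarz inequality once for each of the $s$ coordinates $n_1,\dots,n_s$, doubling one coordinate at each stage; no limiting procedure is involved since the statement is an identity for fixed $N$. The only analytic input is the elementary fact that if $\phi \in L^{\infty}(\mu)$ satisfies $\norm{\phi}_{L^{\infty}(\mu)} \le 1$ and $(\psi_m)$ is any bounded family, then, because $\mu$ is a probability measure, Cauchy--Schwarz (together with Jensen's inequality, used to pass the square inside the average over the remaining variables) gives
\[ \Bigl| \int_X \phi \cdot \mathbb{E}_{m} \psi_m \ d\mu \Bigr|^2 \le \int_X \bigl| \mathbb{E}_m \psi_m \bigr|^2 \ d\mu = \mathbb{E}_{m,m'} \int_X \psi_m \cdot \overline{\psi_{m'}} \ d\mu. \]
This is exactly the mechanism that doubles a single averaging variable $m$ into a pair $(m,m')$, attaches a complex conjugate to the primed copy, and discards any bounded factor independent of $m$.

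First I would single out the coordinate $n_1$. Grouping the product $\prod_{\varepsilon \in \{0,1\}^s} T_{\varepsilon \cdot \underline n} f_{\varepsilon}$ according to the value of $\varepsilon_1$, the factors with $\varepsilon_1=0$ have shift $\sum_{i \ge 2}\varepsilon_i n_i$ and are thus independent of $n_1$; being a product of functions bounded by $1$, they assemble into one factor $\phi$ with $\norm{\phi}_{L^{\infty}(\mu)}\le 1$. Applying the displayed estimate in the variable $n_1$ (after Cauchy--Schwarz in the remaining averaging variables $n_2,\dots,n_s$) bounds $|B|^2$, where $B$ is the left-hand side before raising to the power $2^s$, by an average over $(n_1,n_1')$ and $n_2,\dots,n_s$ in which the $\varepsilon_1=0$ factors have been dropped and each surviving factor (those with $\varepsilon_1=1$, together with the weight $g_{\underline n}$) appears once unconjugated with argument $n_1$ and once conjugated with argument $n_1'$. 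I would then iterate this step for $n_2,\dots,n_s$ in turn: at the $i$-th stage one discards the factors whose $i$-th coordinate is $0$ and doubles $n_i$ into $(n_i,n_i')$, conjugating the primed copy.

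After all $s$ steps the only surviving $f_{\varepsilon}$ is the one indexed by $\varepsilon=\underline 1$, while the weight has been doubled in every coordinate, producing $\prod_{\varepsilon \in \{0,1\}^s} \mathcal{C}^{|\varepsilon|} g_{\underline n^{\varepsilon}}$, the conjugation $\mathcal{C}^{|\varepsilon|}$ recording the parity of the number of primed coordinates selected by $\varepsilon$. The $2^s$ copies of $f_{\underline 1}$ carry the shifts obtained by replacing $n_i$ by $n_i'$ coordinate by coordinate; factoring out the common translation $T_{|\underline n|}$ (legitimate by measure preservation) collects them into $T_{|\underline n|}\Delta_{\underline n-\underline n'} f_{\underline 1}$, and applying $T_{-|\underline n|}$ to the whole integrand transfers this translation onto the weight, yielding precisely
\[ \mathbb{E}_{\underline n, \underline n' \in \Phi_N^s} \int_X \Delta_{\underline n - \underline n'} f_{\underline 1} \cdot T_{-|\underline n|}\Bigl( \prod_{\varepsilon \in \{0,1\}^s} \mathcal{C}^{|\varepsilon|} g_{\underline n^{\varepsilon}}\Bigr) \ d\mu. \]

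I expect the one real difficulty to be bookkeeping rather than analysis: one must track, through all $s$ Cauchy--Schwarz steps, which copy is conjugated, the exact arguments fed to the weights $g_{\underline n}$, and the accumulated shift that is normalized away at the end. In particular, the sign in $\Delta_{\underline n-\underline n'}$ and the placement of the translation $T_{-|\underline n|}$ on the weight depend on the conventions fixed for $\mathcal{C}^{|\varepsilon|}$ and for $\underline n^{\varepsilon}$, and checking that these are mutually consistent is where care is needed. Otherwise the argument is identical to that of \cite[Lemma~3.3]{frajointerg}, the passage from $\mathbb{Z}$ to a general $G$ requiring only that $\mu$ be a probability measure and each $T_g$ be measure preserving.
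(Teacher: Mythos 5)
Your proposal is correct and follows exactly the route the paper intends: the paper gives no proof of this lemma, deferring to \cite[Lemma~3.3]{frajointerg} ``up to straightforward changes in notation,'' and that proof is precisely the iterated Cauchy--Schwarz argument you describe — doubling one coordinate $n_i$ into $(n_i,n_i')$ at each of the $s$ stages, discarding the factors with $\varepsilon_i=0$, and finally factoring out $T_{|\underline{n}|}$ to assemble $\Delta_{\underline{n}-\underline{n}'}f_{\underline{1}}$ and the weight $\prod_{\varepsilon}\mathcal{C}^{|\varepsilon|}g_{\underline{n}^{\varepsilon}}$. You are right that the only delicate point is the bookkeeping of conjugations and of the sign in $\Delta_{\underline{n}-\underline{n}'}$ against the conventions for $\mathcal{C}^{|\varepsilon|}$ and $\underline{n}^{\varepsilon}$, and that nothing beyond measure preservation and $\mu(X)=1$ is needed to pass from $\mathbb{Z}$ to a general $G$.
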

	The analogue of \cite[Lemma~3.4]{frajointerg} also holds in our context, but the proof has to be adapted at a certain stage:
	\begin{lemma}\label{stepreduction}
		Let $G$ be a second-countable locally compact abelian group. Let $(X,\mu,(T_g)_{g \in G})$ be an ergodic system and $f \in L^{\infty}(\mu)$ be such that $\nnorm{f}_s=0$ for some $s \in \mathbb{N}$. Let $(\Phi_N), (\Psi_N)$ be F\o lner sequences in $G$. For $j=1,\dots,s$, $N \in \mathbb{N}$, let $b_{j,N}(n_1,\dots,n_s)\in \ell^{\infty}(G^s)$ be mappings that do not depend on the variable $n_j$ and are bounded by $1$, and let
		\[ c_N(\underline{n}):=\prod_{j=1}^s b_{j,N}(\underline{n}), \ \underline{n} \in \Phi_N^s, N \in \mathbb{N}.\]
		Then,
		\[ \lim_{D \to \infty}\lim_{N \to \infty}\norm{\mathbb{E}_{\underline{n} \in \Phi_N^{s-1}}\mathbb{E}_{d \in \Psi_D} c_N(\underline{n},d) \cdot \Delta_{\underline{n},d}f}_{L^2(\mu)}=0. \]
	\end{lemma}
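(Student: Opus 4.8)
The plan is to adapt the proof of \cite[Lemma~3.4]{frajointerg}, proceeding by induction on $s$ and combining the two van der Corput lemmas with the Cauchy--Schwarz--Gowers inequality of the preceding lemma (the analogue of \cite[Lemma~3.3]{frajointerg}). I would first normalize $\norm{f}_{L^\infty(\mu)}\le 1$, so that the inner average $B_{N,D}:=\mathbb{E}_{\underline n\in\Phi_N^{s-1}}\mathbb{E}_{d\in\Psi_D}c_N(\underline n,d)\,\Delta_{\underline n,d}f$ is itself bounded by $1$ in $L^\infty(\mu)$, which is convenient when it is later used as a dual function. The base case $s=1$ is immediate: there is no $\underline n$-variable, $b_{1,N}$ is constant in $d$ and bounded by $1$, and $\mathbb{E}_{d\in\Psi_D}\Delta_d f = f\cdot\mathbb{E}_{d\in\Psi_D}T_d\bar f$ converges, by the mean ergodic theorem (Theorem~\ref{meanergthm}) and ergodicity, to $f\cdot\int_X\bar f\,d\mu$; since $\nnorm{f}_1 = |\int_X f\,d\mu| = 0$, this limit vanishes.

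For the inductive step the essential mechanism is how the product structure $c_N=\prod_{j=1}^s b_{j,N}$ interacts with a Gowers-type Cauchy--Schwarz. After writing $\norm{B_{N,D}}_{L^2(\mu)}^2=\langle B_{N,D},B_{N,D}\rangle$ and applying the preceding lemma in the variables $\underline n$ (absorbing both $c_N(\underline n,d)$ and the bounded dual function $\overline{B_{N,D}}$ into the arbitrary weight $g_{\underline n}$, and using Jensen's inequality to bring the $d$-average outside), each surviving coefficient appears in a box product $\prod_{\varepsilon\in\{0,1\}^{s-1}}\mathcal{C}^{|\varepsilon|}b_{j,N}(\underline n^\varepsilon,d)$. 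Because $b_{j,N}$ is independent of the variable $n_j$, the factors with $\varepsilon_j=0$ and $\varepsilon_j=1$ are complex conjugates of equal modulus and pair off into $|b_{j,N}|^2\le 1$, so the coefficients $b_{1,N},\dots,b_{s-1,N}$ are neutralized. The remaining coefficient $b_{s,N}$ is independent of $d$, so it can be folded together with the $d$-variable and, after recognizing the residual expression as an $(s-1)$-fold difference applied to $\Delta_d f$, the problem is reduced to the inductive hypothesis; the hypothesis $\nnorm{f}_s=0$ then enters through the inductive definition \ref{seminormdefinition} of the seminorms in the form $\nnorm{f}_s^{2^s}=\lim_{D\to\infty}\mathbb{E}_{d\in\Psi_D}\nnorm{\Delta_d f}_{s-1}^{2^{s-1}}$.

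I expect the main difficulty, and the stage where the argument genuinely departs from the $\mathbb{Z}$-case, to be the reconciliation of the two F\o lner scales $\Phi_N$ (for $\underline n$) and $\Psi_D$ (for $d$): passing from the two-parameter averages produced by the Cauchy--Schwarz steps to honest difference-averages, carrying the $d$-independent coefficient $b_{s,N}$ through, and identifying the resulting iterated limit with a Gowers--Host--Kra seminorm cannot be effected by the explicit interval manipulations available over $\mathbb{Z}$. Instead one must invoke Lemmas~\ref{vdc} and~\ref{2D trick}, whose $o_{M;N\to\infty}(1)$ error terms and auxiliary F\o lner sequence on $G^3$ supply exactly the control needed to send $N$ and then $D$ to infinity in the prescribed order. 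Once this reconciliation is in place, the bound collapses to a constant multiple of $\nnorm{f}_s^{2^s}=0$, completing the induction.
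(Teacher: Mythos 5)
Your base case, your normalizations, and your diagnosis of where the argument must depart from the $\mathbb{Z}$-setting (reconciling the two F\o lner scales $\Phi_N$ and $\Psi_D$ via Lemma~\ref{vdc}, a supremum over auxiliary F\o lner sequences, and a pigeonhole) all agree with the paper. The gap is in the central mechanism. After the Cauchy--Schwarz--Gowers step, each coefficient $b_{j,N}$ does reappear as the box product $\prod_{\varepsilon}\mathcal{C}^{|\varepsilon|}b_{j,N}(\underline{n}^{\varepsilon},d)$, and for $j\le s-1$ the independence from $n_j$ does make the $\varepsilon_j=0$ and $\varepsilon_j=1$ factors conjugate, so the product collapses to $\prod_{\varepsilon:\varepsilon_j=0}|b_{j,N}(\underline{n}^{\varepsilon},d)|^2\in[0,1]$. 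But this does not neutralize anything: the result is a nonnegative weight in $(\underline{n},\underline{n}',d)$ sitting in front of integrals of the form $\int_X \Delta_{\underline{n}-\underline{n}'}(\Delta_d f)\cdot(\cdots)\,d\mu$, and these integrals are not of constant sign, so a weight bounded by $1$ cannot simply be discarded. Concretely, if $f$ is a nonconstant eigenfunction of modulus $1$ then $\nnorm{f}_1=0$ while $\left|\int_X \Delta_n f\,d\mu\right|=1$ for every $n$; an adversarial $[0,1]$-valued weight on the differencing parameters therefore destroys any bound by a Gowers--Host--Kra seminorm. Nor does the residue fit your inductive hypothesis: the surviving weights (including the genuinely un-paired box product of $b_{s,N}$, which is independent of $d$ and hence is \emph{not} collapsed by doubling the $\underline{n}$-variables) live on the doubled tuple $(\underline{n},\underline{n}')$ while $f$ is differenced along $\underline{n}-\underline{n}'$, so the expression is not an instance of Lemma~\ref{stepreduction} with $s$ replaced by $s-1$; and $\nnorm{\Delta_d f}_{s-1}$ is not zero for individual $d$ --- only its average over $d$ vanishes, via \ref{seminormdefinition}.

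The paper never invokes the full Gowers--Cauchy--Schwarz at once; it peels off one weight per round. First $b_{s,N}$, the weight that does not depend on $d$, is removed by an ordinary Cauchy--Schwarz after writing $\Delta_{\underline{n},d}f=T_d(\Delta_{\underline{n}}\bar f)\cdot\Delta_{\underline{n}}f$ and moving the $L^2$-norm inside $\mathbb{E}_{\underline{n}}$ (so that one bounds a sum of nonnegative terms and the weight really can be discarded). Then Lemma~\ref{vdc} is applied to the single average over $d$, and a pigeonhole over the auxiliary shift $n'_{s,D}$ turns each remaining $b_{j,N}$ into $b_{j,N}(\underline{n},n'_{s,D}+n_s)\cdot\bar b_{j,N}(\underline{n},n'_{s,D})$ --- a single new weight still independent of $n_j$. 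This returns the statement to the same shape with $s-1$ weights on the same $s$ variables; after iterating, all weights are gone and the remaining quantity is exactly $\nnorm{f}_s^{2^s}=0$ (for whichever F\o lner sequences arise, which is immaterial). To repair your outline you must replace the ``neutralization'' step by this one-variable-at-a-time elimination, or some other device that genuinely removes, rather than merely bounds, the weights.
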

	\begin{proof}
		Using the Cauchy--Schwarz inequality, together with the fact that $b_{s,N}$ does not depend on $n_s$ and is bounded by $1$ and the fact that
		\[\Delta_{\underline{n},d}f=T_{d}(\Delta_{\underline{n}}\bar{f})\cdot (\Delta_{\underline{n}}f), \quad \underline{n} \in G^{s-1}, d \in G, \]
		it suffices to show that
		\begin{equation}\label{eqn: stupid} \lim_{D \to \infty}\lim_{N \to \infty} \mathbb{E}_{\underline{n} \in \Phi_N^{s-1}} \norm{\mathbb{E}_{n_s' \in \Psi_D} \prod_{j=1}^{s-1}b_{j,N}(\underline{n},n_s')\cdot T_{n_s'}(\Delta_{\underline{n}}\bar{f})}^2_{L^2(\mu)}=0,
		\end{equation}
		relabeling $d$ as $n_s'$.
		
		By Lemma \ref{vdc} for the average over $n_s'$ and composing with $T_{-n_s'}$, for any $M \in \mathbb{N}$, the left-hand side of \ref{eqn: stupid} is bounded above by
		\begin{multline*} \lim_{D \to \infty}\lim_{N \to \infty} \mathbb{E}_{\underline{n} \in \Phi_N^{s-1}} \mathbb{E}_{h' \in \Phi_M}\mathbb{E}_{n_s \in \Phi_M-h'} \mathbb{E}_{n_s' \in \Psi_D+h'} \\ \prod_{j=1}^{s-1} b_{j,N}(\underline{n},n_s'+n_s)\cdot\bar{b}_{j,N}(\underline{n},n_s') \int_X T_{n_s}(\Delta_{\underline{n}}\bar{f})\cdot \Delta_{\underline{n}} f \ d\mu, \end{multline*}
		which is bounded above by
		\begin{multline*} \sup_{\substack{(H_N),(\Theta_D) \\ \text{F\o lner sequences}}} \Bigg|\lim_{D \to \infty}\lim_{N \to \infty} \mathbb{E}_{\underline{n} \in \Phi_N^{s-1}}\mathbb{E}_{n_s \in H_M} \mathbb{E}_{n_s' \in \Theta_D} \\ \prod_{j=1}^{s-1} b_{j,N}(\underline{n},n_s'+n_s)\cdot\bar{b}_{j,N}(\underline{n},n_s') \int_X T_{n_s}(\Delta_{\underline{n}}\bar{f})\cdot \Delta_{\underline{n}} f \ d\mu\Bigg|. \end{multline*}
		Since $M$ is arbitrary, it is thus enough to show that for all F\o lner sequences $(H_N), (\Theta_N)$ we have
		\begin{multline*} \lim_{M \to \infty}\lim_{D \to \infty}\lim_{N \to \infty} \mathbb{E}_{\underline{n} \in \Phi_N^{s-1}}\mathbb{E}_{n_s \in H_M} \mathbb{E}_{n_s' \in \Theta_D} \\ \prod_{j=1}^{s-1} b_{j,N}(\underline{n},n_s'+n_s)\cdot\bar{b}_{j,N}(\underline{n},n_s') \int_X T_{n_s}(\Delta_{\underline{n}}\bar{f})\cdot \Delta_{\underline{n}} f \ d\mu \ = \ 0,\end{multline*}
		so it is enough to show that
		\begin{multline*} \lim_{M \to \infty}\lim_{D \to \infty}\lim_{N \to \infty}
			\mathbb{E}_{n_s' \in \Theta_D} \Bigg|\mathbb{E}_{\underline{n} \in \Phi_N^{s-1}}\mathbb{E}_{n_s \in H_M} \\  \prod_{j=1}^{s-1} b_{j,N}(\underline{n},n_s'+n_s)\cdot\bar{b}_{j,N}(\underline{n},n_s') \int_X T_{n_s}(\Delta_{\underline{n}}\bar{f})\cdot \Delta_{\underline{n}} f \ d\mu\Bigg| \ = \ 0.\end{multline*}
		Thus, as in \cite[Lemma 3.4]{frajointerg}, it is enough to show that for any choice of $n_{s,D}' \in \Theta_D, D  \in \mathbb{N}$ we have
		\begin{multline*} \lim_{M \to \infty} \lim_{N \to \infty} \mathbb{E}_{\underline{n} \in \Phi_N^{s-1}}\mathbb{E}_{n_s \in H_M} \\ \prod_{j=1}^{s-1} b_{j,N}(\underline{n},n_{s,D}'+n_s)\cdot\bar{b}_{j,N}(\underline{n},n_{s,D}') \int_X T_{n_s}(\Delta_{\underline{n}}\bar{f})\cdot \Delta_{\underline{n}} f \ d\mu \ = \ 0, \end{multline*}
		and by the Cauchy--Schwarz inequality it is actually enough to show that
		\[ \lim_{M \to \infty}\lim_{N \to \infty} \norm{\mathbb{E}_{\underline{n} \in \Phi_N^{s-1}}\mathbb{E}_{n_s \in H_M}\prod_{j=1}^{s-1} b_{j,N}'(\underline{n},n_s)\cdot \Delta_{\underline{n},n_s}f}_{L^2(\mu)}=0,\]
		where $b_{j,N}'(\underline{n},n_s):=b_{j,N}(\underline{n},n_{s,D}'+n_s)\cdot\bar{b}_{j,N}(\underline{n},n_{s,D}').$ Note that we now have a product of $s-1$ mappings $b_{j,N}'$ that do not depend on the variable $n_j$. If we keep iterating this process, we will find that it is enough to show
		\[ \lim_{M \to \infty}\lim_{N \to \infty} \mathbb{E}_{\underline{n} \in \Phi_N^{s-1}}\mathbb{E}_{n_s \in H_M} \int_X \Delta_{\underline{n},n_s}f \ d\mu=0\]
		for any choice of F\o lner sequences $(\Phi_{N}),(H_{N})$. This now follows from the assumption that $\nnorm{f}_s=0$, as the choice of F\o lner sequence does not affect the definition.
	\end{proof}
	\subsection{Proof of Theorem~\ref{framainthm}}
	We follow the approach in \cite{frajointerg} and obtain the following result. It is restated here in a more complicated way for technical reasons.
	\begin{thmrep}[\ref{framainthm}]
		Let $G$ be a second-countable locally compact abelian group. Let $(\Phi_N)$ be a F\o lner sequence. Let $(X,\mu,(T_g)_{g \in G})$ be an ergodic system and $a_1,\dots,a_{\ell} : G \to G$ be mappings that are good for seminorm estimates and equidistribution for $(X,\mu,(T_g)_{g \in G})$. Then, for every $l \in \{0,1,\dots,\ell\}$ we have:
		\begin{itemize}
			\item[$(P_l)$] If $f_1,\dots,f_{\ell} \in L^{\infty}(\mu)$ are such that $f_j \in \mathcal{E}((T_g)_{g \in G})$ for $j=l+1,\dots,\ell$, then
			\begin{equation}\label{mainthmeq}
				\lim_{N \to \infty} \mathbb{E}_{n \in \Phi_N} T_{a_1(n)}f_1\cdots T_{a_{\ell}(n)} f_{\ell} \ = \ \int_X f_1 \ d\mu \cdots \int_X f_{\ell} \ d\mu \end{equation}
			in $L^2(\mu)$.
		\end{itemize}
	\end{thmrep}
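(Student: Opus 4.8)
The plan is to prove the restated form by induction on $l \in \{0,1,\dots,\ell\}$, where $(P_\ell)$ is exactly the assertion of Theorem~\ref{framainthm}; throughout I normalize so that $\norm{f_j}_{L^\infty(\mu)} \le 1$. For the base case $(P_0)$ every $f_j$ is an eigenfunction, say $T_g f_j = e(\alpha_j(g))f_j$ with $\alpha_j \in \mathrm{Spec}((T_g)_{g \in G})$, so $T_{a_j(n)}f_j = e(\alpha_j(a_j(n)))f_j$ and the average factors as
\[ \mathbb{E}_{n \in \Phi_N}\Big(\prod_{j=1}^\ell e(\alpha_j(a_j(n)))\Big)\prod_{j=1}^\ell f_j = \Big(\mathbb{E}_{n \in \Phi_N} e\big(\textstyle\sum_{j}\alpha_j(a_j(n))\big)\Big)\prod_{j=1}^\ell f_j. \]
If all $\alpha_j$ are trivial, each $f_j$ is $(T_g)_{g \in G}$-invariant, hence constant equal to $\int_X f_j \, d\mu$ by ergodicity, and the scalar average equals $1$, giving precisely $\prod_j \int_X f_j \, d\mu$. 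If some $\alpha_{j_0}$ is nontrivial, the good-for-equidistribution hypothesis forces the scalar average to $0$, while $\int_X f_{j_0}\,d\mu = e(\alpha_{j_0}(g))\int_X f_{j_0}\,d\mu$ for all $g$ shows that integral vanishes, so the right-hand side is also $0$.

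For the inductive step I assume $(P_{l-1})$. Writing $f_l = \int_X f_l\,d\mu + f_l^{0}$ with $\int_X f_l^{0}\,d\mu = 0$ and using multilinearity in the $l$-th slot, the constant part contributes $\int_X f_l\,d\mu$ times the average carrying $1$ in slot $l$; since $1$ is an eigenfunction, slots $l,\dots,\ell$ are then all eigenfunctions, so $(P_{l-1})$ applies and this term converges to $\int_X f_l\,d\mu \cdot \prod_{j\ne l}\int_X f_j\,d\mu = \prod_j \int_X f_j\,d\mu$. It therefore suffices to prove that the average tends to $0$ in $L^2(\mu)$ when $\int_X f_l\,d\mu = 0$, the conditions $f_{l+1},\dots,f_\ell \in \mathcal{E}((T_g)_{g \in G})$ being retained.

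To establish this vanishing I would run a degree-lowering argument in the spirit of Peluse and Peluse--Prendiville. The good-for-seminorm-estimates hypothesis supplies an $s$ for which $\nnorm{f_l}_s = 0$ already forces the average to $0$, so the task is to lower the controlling degree from $s$ down to $1$, since $\nnorm{f_l}_1 = |\int_X f_l\,d\mu| = 0$ is exactly our reduced hypothesis. Each lowering step takes a configuration whose average is controlled by $\nnorm{\cdot}_k$ and shows it is controlled by $\nnorm{\cdot}_{k-1}$: assuming the limit does not vanish, the inverse-type consequence of Proposition~\ref{correlates1} produces eigenfunctions $\chi_{\underline n}$ with $\liminf_{N\to\infty}\mathbb{E}_{\underline n \in \Phi_N^{k-2}}\mathrm{Re}\int_X \Delta_{\underline n}f_l\cdot\chi_{\underline n}\,d\mu > 0$; applying van der Corput via Lemmas~\ref{vdc} and~\ref{2D trick} produces character sums in the $a_j$, where the good-for-equidistribution hypothesis averages away every nontrivial phase, and Lemma~\ref{stepreduction} shows that any surviving correlation is incompatible with $\nnorm{f_l}_{k-1} = 0$. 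Iterating from $k = s$ down to $k = 2$ yields control by the mean and hence $(P_l)$.

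The main obstacle is precisely this descent in the locally compact abelian setting: one must track, across the van der Corput manipulations, the interplay between the eigenfunction weights arising from $f_{l+1},\dots,f_\ell$, the eigenfunctions $\chi_{\underline n}$ furnished by the inverse-type proposition, and the equidistribution hypothesis, all while ensuring that the iterated limits over F\o lner sequences—including the auxiliary F\o lner sequence in $G^3$ provided by Lemma~\ref{2D trick}—behave as in the $\mathbb{Z}$-case. The base case and the reduction to mean zero are routine; the degree-lowering is where essentially all of the work resides.
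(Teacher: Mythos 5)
Your base case $(P_0)$ matches the paper's argument and is fine. The inductive step, however, has a genuine gap. You decompose only in slot $l$ and reduce to proving vanishing when $\int_X f_l\,d\mu=0$. The paper instead decomposes the functions $f_j$ for $j\neq l$ and reduces to the case where \emph{some $f_j$ with $j\neq l$} has zero integral. This is not cosmetic: the contradiction that closes the paper's inductive step is that $\int_X\tilde f_l\,d\mu\neq0$ forces, via Cauchy--Schwarz, $\limsup_{k\to\infty}\norm{\mathbb{E}_{n\in\Phi_{N_k}}\prod_{j\neq l}T_{a_j(n)}f_j}_{L^2(\mu)}>0$, which contradicts $(P_{l-1})$ (applied with $f_l:=1$) precisely because one of the factors with index $j\neq l$ has zero integral. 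Under your reduction, $(P_{l-1})$ applied to the product over $j\neq l$ yields $\prod_{j\neq l}\int_X f_j\,d\mu$, which need not vanish, so that contradiction is unavailable; your case can be salvaged only by a further decomposition of the $f_j$, $j\neq l$, which lands you back in the paper's case and which you do not perform.

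More seriously, your degree-lowering is applied to the wrong function. The central device of the paper's proof, Proposition~\ref{flprop}, replaces $f_l$ by the dual function $\tilde f_l=\lim_{k\to\infty}\mathbb{E}_{n\in\Phi_{N_k}}T_{-a_l(n)}g_k\cdot\prod_{j\neq l}T_{a_j(n)-a_l(n)}\bar f_j$, and it is $\tilde f_l$, not $f_l$, whose seminorms are lowered in Proposition~\ref{mainprop} ($\nnorm{\tilde f_l}_{s+2}>0\Rightarrow\nnorm{\tilde f_l}_{s+1}>0$, iterated down to $\nnorm{\tilde f_l}_1>0$). This matters because the ``character sums in the $a_j$'' you invoke do not arise from van der Corput applied to $\Delta_{\underline n}f_l\cdot\chi_{\underline n}$ --- the sequences $a_1,\dots,a_\ell$ appear nowhere in that expression. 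They arise from substituting the explicit weak-limit form of $\tilde f_l$ into the correlation supplied by Proposition~\ref{fnkprop}, composing with $T_{a_l(n)}$, using the inductive hypothesis $(P_{l-1})$ \emph{inside} the descent to replace the non-eigenfunction factors by their integrals, and only then reading off exponential sums of the form $\mathbb{E}_{n}\,e\bigl(\beta_{\underline n,\underline n'}(a_l(n))+\sum_{j>l}\alpha_{j,\underline n,\underline n'}(a_j(n))\bigr)$, to which the equidistribution hypothesis applies. Your sketch never constructs $\tilde f_l$ and never uses $(P_{l-1})$ within the degree-lowering, so the step where equidistribution is supposed to ``average away every nontrivial phase'' has nothing to act on. Since you yourself locate essentially all of the work in this descent, the proposal as written does not contain a proof of the inductive step.
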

	In order to prove Theorem~\ref{framainthm} we need some auxiliary results. The first one is an easy proof by contradiction as in \cite[Proposition~4.2]{frajointerg}, so we merely give the statement.
	\begin{proposition}\label{flprop}
		Let $G$ be a second-countable locally compact abelian group. Let $a_1,\dots,a_{\ell}: G \to G$ be mappings, $(\Phi_N)$ be a F\o lner sequence, $(X,\mu, (T_g)_{g \in G})$ be a system, and $f_1,\dots,f_{\ell} \in L^{\infty}(\mu)$ be such that
		\[ \limsup_{N\to\infty} \norm{\mathbb{E}_{n \in \Phi_N} T_{a_1(n)}f_1\cdots T_{a_{\ell}(n)}f_{\ell}}_{L^2(\mu)} \ > \ 0. \]
		Let $l \in \{1,\dots,\ell\}$. Then we can find a F\o lner subsequence $(\Phi_{N_k})$ and $g_k \in L^{\infty}(\mu)$ with $\norm{g_k}_{L^{\infty}(\mu)} \leq 1$, $k \in \mathbb{N}$, such that for
		\begin{equation}\label{fldef}
			\tilde{f}_{l}:=\lim_{k \to \infty} \mathbb{E}_{n \in \Phi_{N_k}} T_{-a_{l}(n)}g_k \cdot \prod_{j=1, j \neq l}^{\ell} T_{a_j(n)-a_l(n)}\bar{f_j},
		\end{equation}
		where the limit is a weak limit (so $\tilde{f}_l \in L^{\infty}(\mu)$), we have
		\begin{equation*}
			\limsup_{N\to\infty} \norm{\mathbb{E}_{n \in \Phi_N} T_{a_l(n)}\tilde{f}_l \cdot \prod_{j=1, j \neq l}^{\ell} T_{a_j(n)}f_j}_{L^2(\mu)} \ > \  0.
		\end{equation*}
		Furthermore, if $\ell = l = 1$ and $\int f_1 \ d\mu = 0$, then we can choose $\tilde{f}_1$ of the form \ref{fldef} and such that $\int \tilde{f}_1 \ d\mu = 0$. 
	\end{proposition}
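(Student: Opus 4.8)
The plan is to run a soft duality (adjoint) argument combined with a weak-$*$ compactness extraction, exactly paralleling \cite[Proposition~4.2]{frajointerg}. Write $A_N := \mathbb{E}_{n \in \Phi_N} T_{a_1(n)}f_1\cdots T_{a_{\ell}(n)}f_{\ell}$, and for $h \in L^{\infty}(\mu)$ introduce the ($n$-averaged) multilinear operator
\[ \Lambda_N(h) := \mathbb{E}_{n \in \Phi_N} T_{a_l(n)}h \cdot \prod_{j=1,\, j \neq l}^{\ell} T_{a_j(n)}f_j, \]
so that $A_N = \Lambda_N(f_l)$ and the target quantity is $\Lambda_N(\tilde{f}_l)$. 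Applying the measure-preserving map $T_{-a_l(n)}$ inside the integral for each fixed $n$ shows that $\Lambda_N$ has adjoint
\[ \Lambda_N^*(G) := \mathbb{E}_{n \in \Phi_N} T_{-a_l(n)}G \cdot \prod_{j=1,\, j \neq l}^{\ell} T_{a_j(n)-a_l(n)}\bar{f_j}, \]
that is, $\langle \Lambda_N(h), G\rangle = \langle h, \Lambda_N^*(G)\rangle$; note that $\Lambda_N^*(G)$ is precisely the expression inside the limit \ref{fldef} when $G = g_k$. This identity drives the whole argument, and lining up the conjugations and the shift by $-a_l(n)$ with \ref{fldef} is the one place requiring care.

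First I would use the hypothesis to pass to a F\o lner subsequence $(\Phi_{N_k})$ along which $\norm{A_{N_k}}_{L^2(\mu)} \to c > 0$. For each $k$ set $g_k := A_{N_k}/\norm{A_{N_k}}_{L^{\infty}(\mu)}$, so that $\norm{g_k}_{L^{\infty}(\mu)} \leq 1$ and
\[ \langle g_k, A_{N_k}\rangle = \frac{\norm{A_{N_k}}_{L^2(\mu)}^2}{\norm{A_{N_k}}_{L^{\infty}(\mu)}} \ \geq \ \frac{\norm{A_{N_k}}_{L^2(\mu)}^2}{C}, \]
where $C := \prod_{j} \norm{f_j}_{L^{\infty}(\mu)}$ bounds $\norm{A_{N_k}}_{L^{\infty}(\mu)}$. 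The functions $\tilde{f}_{l,k} := \Lambda_{N_k}^*(g_k)$ are uniformly bounded in $L^{\infty}(\mu)$, so by the Banach--Alaoglu theorem together with the separability of $L^1(\mu)$ (which makes the weak-$*$ topology sequentially compact on bounded sets) I pass to a further subsequence along which $\tilde{f}_{l,k} \to \tilde{f}_l$ weak-$*$; this $\tilde{f}_l$ has exactly the form \ref{fldef}.

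Next I would extract two consequences of weak-$*$ convergence, pairing against the fixed $L^1(\mu)$ functions $\bar{f_l}$ and $\overline{\tilde{f}_l}$ respectively. Using the adjoint identity with $h = f_l$ gives $\langle \tilde{f}_{l,k}, f_l\rangle = \langle g_k, A_{N_k}\rangle \geq c^2/C$ for large $k$, hence $\langle \tilde{f}_l, f_l\rangle \geq c^2/C > 0$, so in particular $\tilde{f}_l \neq 0$. Then, using the adjoint identity with $h = \tilde{f}_l$ and $G = g_k$,
\[ \langle \Lambda_{N_k}(\tilde{f}_l), g_k\rangle = \langle \tilde{f}_l, \tilde{f}_{l,k}\rangle \ \longrightarrow \ \norm{\tilde{f}_l}_{L^2(\mu)}^2 > 0. \]
Since $\norm{g_k}_{L^2(\mu)} \leq \norm{g_k}_{L^{\infty}(\mu)} \leq 1$, the Cauchy--Schwarz inequality yields $\norm{\Lambda_{N_k}(\tilde{f}_l)}_{L^2(\mu)} \geq |\langle \Lambda_{N_k}(\tilde{f}_l), g_k\rangle|$, so $\limsup_{N} \norm{\Lambda_N(\tilde{f}_l)}_{L^2(\mu)} \geq \norm{\tilde{f}_l}_{L^2(\mu)}^2 > 0$, which is the desired conclusion (equivalently, assuming the $\limsup$ vanishes forces $\norm{\tilde{f}_l}_{L^2(\mu)} = 0$, contradicting $\tilde{f}_l \neq 0$, giving the ``easy proof by contradiction''). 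For the final ``Furthermore'' claim, observe that when $\ell = l = 1$ and $\int f_1 \ d\mu = 0$ one has $\int A_N \ d\mu = \mathbb{E}_{n \in \Phi_N} \int f_1 \ d\mu = 0$ for every $N$; hence each $g_k$, being a scalar multiple of $A_{N_k}$, integrates to $0$, and $\int \tilde{f}_1 \ d\mu = \lim_{k} \mathbb{E}_{n \in \Phi_{N_k}} \int T_{-a_1(n)}g_k \ d\mu = \lim_{k} \int g_k \ d\mu = 0$.

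The main obstacle is bookkeeping rather than conceptual: one must set up $\Lambda_N^*$ so that $\Lambda_N^*(g_k)$ reproduces \ref{fldef} verbatim, and then check that the single weak-$*$ extraction simultaneously delivers $\tilde{f}_l \neq 0$ (via the pairing against $\bar{f_l}$) and the convergence $\langle \tilde{f}_l, \tilde{f}_{l,k}\rangle \to \norm{\tilde{f}_l}_{L^2(\mu)}^2$ (via the pairing against $\overline{\tilde{f}_l}$). Notably, neither the equidistribution hypothesis nor the seminorm estimates enter here; this is one of the purely ``soft'' steps of the proof of Theorem~\ref{framainthm}.
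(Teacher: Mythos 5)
Your proof is correct and follows essentially the same route as the paper, which itself defers to the ``easy proof by contradiction'' of \cite[Proposition~4.2]{frajointerg}: the duality identity obtained by composing with $T_{-a_l(n)}$, the dual test functions $g_k$ built from (normalized) $A_{N_k}$, a weak-$*$ extraction of $\tilde f_l$, and the two pairings (against $f_l$ to get $\tilde f_l \neq 0$, against $\tilde f_l$ to get the positive $\limsup$) are exactly the intended argument, and your treatment of the mean-zero case when $\ell=l=1$ is the standard one.
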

	For the next result, the proof given in \cite[Proposition~4.3]{frajointerg} works for our setting almost verbatim, so we do not reproduce it.
	\begin{proposition}\label{fnkprop}
		Let $G$ be a second-countable locally compact abelian group. Let $(X,\mu,(T_g)_{g \in G})$ be an ergodic system, $f_{n,k} \in L^{\infty}(\mu)$, $k \in \mathbb{N}, n \in G$, be bounded by $1$, and $f \in L^{\infty}(\mu)$ be defined by
		\[ f:=\lim_{k \to \infty} \mathbb{E}_{n \in \Phi_{N_k}} f_{n,k}\]
		for some F\o lner sequence $(\Phi_{N_k})$, where the average is assumed to converge weakly. If $\nnorm{f}_{s+2}>0$ for some $s \geq 0$, then there exist $a>0$, a subset $\Lambda$ of $G^s$ with $\underline{d}_{(\Phi_N^s)}(\Lambda)>0$, and $\chi_{\underline{n}} \in \mathcal{E}((T_g)_{g \in G})$, $\underline{n} \in G^s$, \footnote{We choose $\chi_{\underline{n}}$, $\underline{n} \in G^s$, arbitrarily if $\underline{n} \notin \Lambda$.} such that
		\[ \mathrm{Re} \left( \int_X \Delta_{\underline{n}}f \cdot \chi_{\underline{n}}\right)>a, \quad \underline{n} \in \Lambda,\]
		and
		\[ \liminf_{N \to \infty} \mathbb{E}_{n, n' \in \Phi_N^s} \limsup_{k \to \infty} \mathbb{E}_{n \in \Phi_{N_k}} \mathrm{Re} \left(\int_X \Delta_{\underline{n}-\underline{n}'}f_{n,k} \cdot \chi_{\underline{n},\underline{n}'}\cdot 1_{\Lambda'}(\underline{n},\underline{n}')\right)>0,\]
		where
		\[ \chi_{\underline{n},\underline{n}'}:= T_{-|\underline{n}|} \left( \prod_{\varepsilon \in \{0,1\}^s} \mathcal{C}^{|\varepsilon|}\chi_{\underline{n}^{\varepsilon}}\right),\]
		and
		\[ \Lambda':=\{(\underline{n},\underline{n}') \in G^{2s} : \underline{n}^{\varepsilon} \in \Lambda \textrm{ for all } \varepsilon \in \{0,1\}^s\}.\]
	\end{proposition}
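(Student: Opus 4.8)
The plan is to adapt the argument of \cite[Proposition~4.3]{frajointerg}, producing the density set $\Lambda$ (and hence the first conclusion) from the unlabeled proposition stated just above, and then transferring positivity to the pre-limit functions $f_{n,k}$ by unfolding a single linear factor of the weak limit and invoking the van der Corput-type lemma above. Since $f$ is a weak limit of averages of functions bounded by $1$, we have $\norm{f}_{L^\infty(\mu)} \le 1$, and I would first note that $(\Phi_N^s)$ is a F\o lner sequence in $G^s$ (a product of F\o lner sequences is F\o lner), so that the density $\underline{d}_{(\Phi_N^s)}$ is meaningful. Applying the unlabeled proposition above --- which converts $\nnorm{f}_{s+2}>0$ into a positive averaged correlation of $\Delta_{\underline{n}}f$ with eigenfunctions --- yields $\chi_{\underline{n}} \in \mathcal{E}((T_g)_{g \in G})$ with
\[ \liminf_{N \to \infty} \mathbb{E}_{\underline{n} \in \Phi_N^s} \mathrm{Re}\left( \int_X \Delta_{\underline{n}}f \cdot \chi_{\underline{n}} \ d\mu\right) \ =: \ 2c \ > \ 0. \]
As each integrand has modulus at most $1$, a Markov-type estimate shows that $\Lambda := \{ \underline{n} \in G^s : \mathrm{Re}( \int_X \Delta_{\underline{n}}f \cdot \chi_{\underline{n}} \, d\mu) > c \}$ has $\underline{d}_{(\Phi_N^s)}(\Lambda) > 0$; taking $a := c$ gives the first displayed conclusion.

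For the second conclusion I would exploit that, inside $\int_X \Delta_{\underline{n}}f \cdot \chi_{\underline{n}} \, d\mu$, the factor $\mathcal{C}^s T_{|\underline{n}|}f$ (the $\varepsilon = \underline{1}$ slot of $\Delta_{\underline{n}}f$) enters \emph{linearly}. Hence I may unfold that slot alone: using $f = \lim_k \mathbb{E}_{m \in \Phi_{N_k}} f_{m,k}$ and weak continuity on bounded sets,
\[ \int_X \Delta_{\underline{n}}f \cdot \chi_{\underline{n}} \ d\mu = \lim_{k \to \infty} \mathbb{E}_{m \in \Phi_{N_k}} \int_X \left(\prod_{\varepsilon \neq \underline{1}} \mathcal{C}^{|\varepsilon|} T_{\varepsilon \cdot \underline{n}} f \right) \cdot \mathcal{C}^s T_{|\underline{n}|} f_{m,k} \cdot \chi_{\underline{n}} \ d\mu. \]
Averaging against $1_\Lambda(\underline{n})$ over $\underline{n} \in \Phi_N^s$ and using the first conclusion keeps the real part bounded below; then, for each fixed $m$, I would apply Jensen's (power-mean) inequality to move $\mathbb{E}_{m}$ outside a $2^s$-th power and apply the van der Corput-type lemma above (the analogue of \cite[Lemma~3.3]{frajointerg}) with distinguished slot $f_{\underline{1}} := f_{m,k}$, remaining slots the relevant translates of $f$, and weight $g_{\underline{n}} := 1_\Lambda(\underline{n})\chi_{\underline{n}}$. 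The lemma's right-hand side then produces exactly the box $\Delta_{\underline{n}-\underline{n}'} f_{m,k}$, the twisted eigenfunction $\chi_{\underline{n},\underline{n}'} = T_{-|\underline{n}|}(\prod_{\varepsilon} \mathcal{C}^{|\varepsilon|}\chi_{\underline{n}^{\varepsilon}})$, and the indicator $\prod_{\varepsilon} 1_\Lambda(\underline{n}^{\varepsilon}) = 1_{\Lambda'}(\underline{n},\underline{n}')$; reorganizing the averages and limits gives the second displayed conclusion.

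The hard part will be the legitimacy and bookkeeping of this transfer. The weak limit defining $f$ does not commute with the $2^s$-fold product implicit in $\Delta_{\underline{n}}f$, so one cannot substitute $\lim_k \mathbb{E}_m f_{m,k}$ into all factors at once; unfolding only the single linear slot $\mathcal{C}^s T_{|\underline{n}|}f$ and then invoking van der Corput --- which retains precisely one distinguished factor while Cauchy--Schwarz absorbs the others (each bounded by $1$) --- is exactly what circumvents this. Equally delicate is controlling the nested limits $\liminf_N$, $\limsup_k$ and the averages $\mathbb{E}_{m}$, $\mathbb{E}_{\underline{n},\underline{n}'}$ so that the positive lower bound genuinely survives (a Fatou-type inequality lets me push $\limsup_k$ inside $\liminf_N \mathbb{E}_{\underline{n},\underline{n}'}$), together with checking that the $o_{M;N\to\infty}(1)$ errors from the van der Corput step and from the F\o lner structure on $G^s$ vanish in the limit. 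Once these soft points are handled, the adaptation of \cite[Proposition~4.3]{frajointerg} from $\mathbb{Z}$ to a general second-countable locally compact abelian $G$ is routine, as the authors indicate.
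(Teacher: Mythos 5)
Your proposal follows essentially the same route as the paper, which does not reproduce the argument but states that the proof of \cite[Proposition~4.3]{frajointerg} carries over almost verbatim: you correctly extract $\Lambda$ and the $\chi_{\underline{n}}$ from the preceding correlation proposition via a Markov-type estimate, unfold only the linear $\varepsilon=\underline{1}$ slot of $\Delta_{\underline{n}}f$ using the weak limit, and apply the Gowers--Cauchy--Schwarz-type lemma with $g_{\underline{n}}=1_{\Lambda}(\underline{n})\chi_{\underline{n}}$ to produce $\Delta_{\underline{n}-\underline{n}'}f_{n,k}$, $\chi_{\underline{n},\underline{n}'}$, and $1_{\Lambda'}$, with reverse Fatou handling the limit bookkeeping. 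This is the intended adaptation.
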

	We now move to the most important intermediate result in this section:
	\begin{proposition}\label{mainprop}
		Let $G$ be a second-countable locally compact abelian group. Let $(\Phi_N)$ be a F\o lner sequence. Let $(X,\mu,(T_g)_{g \in G})$ be an ergodic system and $a_1,\dots,a_{\ell} : G \to G$ be good for equidistribution for the system $(X,\mu,(T_g)_{g \in G})$. Suppose that property $(P_{l-1})$ of Theorem \ref{framainthm} holds for some $l \in \{1,\dots,\ell\}$. Let $f_1,\ldots, f_{\ell}$ be as in Proposition~\ref{flprop}, and let $(\Phi_{N_k})$, $g_k$, $k\in \mathbb{N}$, and $\tilde{f}_{l}$ be as in the conclusion of that proposition. Further assume that $f_{l+1},\dots,f_{\ell} \in \mathcal{E}((T_g)_{g\in G})$. Suppose that $\nnorm{\tilde{f}_l}_{s'+2}>0$ for some $s' \geq 0$. Then $\int_X \tilde{f}_l \ d\mu \neq 0$.
	\end{proposition}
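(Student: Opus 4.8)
The plan is to argue by finite descent on $s'$, the base case $s'=0$ being the crux that delivers the conclusion and the step $s'\mapsto s'-1$ being pure degree-lowering. In the base case $\nnorm{\tilde f_l}_2>0$, so by Proposition~\ref{correlates1} (or Lemma~\ref{2seminormkronecker}) I may fix an eigenfunction $\chi\in\mathcal E((T_g)_{g\in G})$ with $\int_X\tilde f_l\cdot\bar\chi\,d\mu\neq0$. I would evaluate this correlation by inserting the weak-limit formula \ref{fldef} for $\tilde f_l$ and applying the measure-preserving map $T_{a_l(n)}$ to the integrand: the inner average over $n\in\Phi_{N_k}$ becomes $\mathbb E_{n}\big(\prod_{j<l}T_{a_j(n)}\bar f_j\big)\,T_{a_l(n)}\bar\chi\,\prod_{j>l}T_{a_j(n)}\bar f_j$, whose slots $l,l+1,\dots,\ell$ are occupied by the eigenfunctions $\bar\chi,\bar f_{l+1},\dots,\bar f_\ell$. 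This is exactly the situation governed by the inductive hypothesis $(P_{l-1})$, so it converges in $L^2$ to the constant $C:=\prod_{j<l}\int\bar f_j\cdot\int\bar\chi\cdot\prod_{j>l}\int\bar f_j$. Since $\norm{g_k}_{L^\infty(\mu)}\le1$, I may pull $C$ through the $k$-limit and conclude $\int_X\tilde f_l\bar\chi\,d\mu=C\cdot\lim_k\int_X g_k\,d\mu$. As the left-hand side is nonzero, $C\neq0$, whence $\int_X\bar\chi\,d\mu\neq0$; by ergodicity this forces $\chi$ to be constant, which is precisely the assertion $\int_X\tilde f_l\,d\mu\neq0$.

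For $s'\ge1$ I would show $\nnorm{\tilde f_l}_{s'+2}>0\Rightarrow\nnorm{\tilde f_l}_{s'+1}>0$, so that finitely many iterations reduce to the base case. Since $\tilde f_l$ has the weak-limit shape required by Proposition~\ref{fnkprop}, with building blocks $f_{n,k}=T_{-a_l(n)}g_k\prod_{j\neq l}T_{a_j(n)-a_l(n)}\bar f_j$, I apply that proposition with $s=s'$ to produce $a>0$, a positive-density set $\Lambda\subseteq G^{s'}$, eigenfunctions $\chi_{\underline n}$ with characters $\beta_{\underline n}$, and the accompanying double-average positivity over $\underline n,\underline n'\in\Phi_N^{s'}$. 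I then substitute the explicit $f_{n,k}$ into $\Delta_{\underline n-\underline n'}f_{n,k}$ and regroup the $2^{s'}$ factors: the $g_k$-copies recombine into $T_{-a_l(n)}\Delta_{\underline n-\underline n'}g_k$, each $\bar f_j$ with $j<l$ into $T_{a_j(n)-a_l(n)}\Delta_{\underline n-\underline n'}\bar f_j$, while the eigenfunction copies with $j>l$ cancel as functions (as $|f_j|=1$) and, since $\sum_{\varepsilon\in\{0,1\}^{s'}}(-1)^{|\varepsilon|}=0$ for $s'\ge1$, leave only an $n$-independent phase. After applying $T_{a_l(n)}$ and pulling the $n$-independent factor $\Delta_{\underline n-\underline n'}g_k$ outside the average over $n$, the remaining inner average is once more of $(P_{l-1})$-type—general functions in slots $<l$, the eigenfunction $\chi_{\underline n,\underline n'}$ in slot $l$, constants above—so it converges to $\prod_{j<l}\int\Delta_{\underline n-\underline n'}\bar f_j\cdot\int\chi_{\underline n,\underline n'}$.

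By ergodicity $\int\chi_{\underline n,\underline n'}\neq0$ only when $\chi_{\underline n,\underline n'}$ is trivial, i.e.\ when $\sum_{\varepsilon}(-1)^{|\varepsilon|}\beta_{\underline n^\varepsilon}=0$; hence the double-average positivity forces this character-alignment to hold on a positive-density subset of $\Lambda'$. Feeding this alignment back into the pointwise correlation $\mathrm{Re}\int\Delta_{\underline n}\tilde f_l\cdot\chi_{\underline n}>a$ on $\Lambda$ recorded by Proposition~\ref{fnkprop}, I would assemble the $\chi_{\underline n}$ into eigenfunctions correlating with the $(s'-1)$-fold differences $\Delta_{\underline m}\tilde f_l$ on a positive-density set of $\underline m\in G^{s'-1}$, which by the description \ref{2seminormearlystop} of the seminorms is exactly $\nnorm{\tilde f_l}_{s'+1}>0$, completing the descent. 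I expect this last extraction—the passage from positive-density character-alignment to genuine positivity of the one-lower seminorm—to be the main obstacle; it is a degree-lowering (``inverse'') step, and care is also needed in the regrouping of the $2^{s'}$ factors and in justifying the application of $(P_{l-1})$ with constants in the unused slots. Good-for-equidistribution enters throughout via $(P_{l-1})$ (ultimately through its base case $(P_0)$), which is what makes $\int\chi_{\underline n,\underline n'}$ and $\int\bar\chi$ vanish for nontrivial characters.
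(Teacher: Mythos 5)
Your overall architecture matches the paper's: a descent on the seminorm degree, with \ref{fldef} fed into Proposition~\ref{fnkprop}, the factors regrouped, property $(P_{l-1})$ invoked to reduce to a character statement, and ergodicity/equidistribution forcing the alignment $\beta_{\underline{n},\underline{n}'}=0$ on a positive-density set. Your base case $s'=0$ is a correct (and slightly more direct) shortcut: the paper instead runs the same descent down to $\nnorm{\tilde{f}_l}_1>0$ and concludes by ergodicity, but your argument via Proposition~\ref{correlates1} and $(P_{l-1})$ is sound. Likewise, applying $(P_{l-1})$ wholesale and using $\int\chi_{\underline{n},\underline{n}'}\,d\mu=0$ for nontrivial eigenfunctions is an acceptable repackaging of the paper's step of replacing slots $1,\dots,l-1$ by integrals and then invoking equidistribution on the resulting character sum.

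The genuine gap is the step you yourself flag as ``the main obstacle'': passing from the character alignment \ref{characterequalitylast} to $\nnorm{\tilde{f}_l}_{s'+1}>0$. As sketched, your route is based on a reversed implication. You propose to produce eigenfunctions correlating with the $(s'-1)$-fold differences $\Delta_{\underline{m}}\tilde{f}_l$ on a positive-density set and assert that this ``is exactly'' $\nnorm{\tilde{f}_l}_{s'+1}>0$; but by \ref{2seminormearlystop} and Proposition~\ref{correlates1} such correlation is only an \emph{upper bound} for $\nnorm{\tilde{f}_l}_{s'+1}$ --- it is implied by, and does not imply, positivity of that seminorm (a priori the correlating eigenfunctions could be supplying all of the mass). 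The paper's mechanism is different and goes \emph{up} a variable rather than down: from \ref{characterequalitylast} the weight $e(\alpha_{\underline{n}}(n_{s+1}))$ factors as $\prod_{j=1}^{s}b_{j,N}(\underline{n},n_{s+1})$ with each $b_{j,N}$ independent of $n_j$; one then averages over an extra variable $n_{s+1}$, applies Lemma~\ref{vdc} and a pigeonhole choice of $h'_M$ to arrange a genuine F\o lner average, and finally invokes the contrapositive of Lemma~\ref{stepreduction} (itself an iterated van der Corput argument) to conclude that such a product-weighted average of $\Delta_{(\underline{n},n_{s+1})}\tilde{f}_l$ could not be bounded away from zero if $\nnorm{\tilde{f}_l}_{s+1}=0$. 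Without Lemma~\ref{stepreduction} or a substitute for it, the descent does not close.
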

	\begin{proof}
		Suppose without loss of generality that the functions $f_1,\dots,f_{\ell}$ are bounded by $1$. We will show that if $\nnorm{\tilde{f}_l}_{s+2}>0$ for some $s \geq 0$, then $\nnorm{\tilde{f}_l}_{s+1}>0$. Applying this $s'+1$ times, it follows that $\nnorm{\tilde{f}_l}_1>0$, so $\int_X \tilde{f}_l \ d\mu \neq 0$ by ergodicity. Since $\nnorm{\tilde{f}_l}_{s+2}>0$, we can use Proposition \ref{fnkprop} for $\tilde{f}_l$ with
		\[ f_{n,k}:=T_{-a_l(n)}g_k \cdot \prod_{j \in \{1,\dots,\ell\}, j \neq l} T_{a_j(n)-a_l(n)} \bar{f}_j, \quad k \in \mathbb{N}, n \in G.\]
		Thus, there exist $a>0$, a subset $\Lambda$ of $G^s$ with positive lower density (along $(\Phi_N^s)$), $\chi_{\underline{n}} \in \mathcal{E}((T_g)_{g \in G})$, $\underline{n} \in G^s$, such that
		\begin{equation}\label{firstineqmainprop}
			\mathrm{Re} \left( \int_X \Delta_{\underline{n}} \tilde{f}_l \cdot \chi_{\underline{n}}\right)>a, \quad \underline{n} \in \Lambda,
		\end{equation}
		and
		\begin{multline}\label{mainpropfirsteq}
			\liminf_{N \to \infty} \mathbb{E}_{\underline{n}, \underline{n}' \in \Phi_N^s} \limsup_{k \to \infty}
			\mathbb{E}_{n \in \Phi_{N_k}} \mathrm{Re} \Bigg( \int_X T_{-a_l(n)}(\Delta_{\underline{n}-\underline{n}'}g_k) \\ \prod_{j \in \{1,\dots,\ell\}, j \neq l} T_{a_j(n)-a_l(n)}(\Delta_{\underline{n}-\underline{n}'}\bar{f}_j) \cdot \chi_{\underline{n},\underline{n}'} \cdot 1_{\Lambda'}(\underline{n},\underline{n}') \ d\mu\Bigg) \ > \ 0,
		\end{multline}
		where
		\[ \chi_{\underline{n},\underline{n}'}:=T_{-|\underline{n}|} \left( \prod_{\varepsilon \in \{0,1\}^s}\mathcal{C}^{|\varepsilon|} \chi_{\underline{n}^{\varepsilon}} \right), \quad \underline{n}, \underline{n}' \in G^s,\]
		and
		\[ \Lambda':=\{(\underline{n},\underline{n}') \in G^{2s} : \underline{n}^{\varepsilon} \in \Lambda \textrm{ for all } \varepsilon \in \{0,1\}^s \}.\]
		We start by analyzing \ref{mainpropfirsteq}. After composing with $T_{a_l(n)}$ we get
		\begin{multline*}
			\liminf_{N \to \infty} \mathbb{E}_{\underline{n}, \underline{n}' \in \Phi_N^s} \limsup_{k \to \infty}
			\mathbb{E}_{n \in \Phi_{N_k}} \mathrm{Re} \Bigg( \int_X (\Delta_{\underline{n}-\underline{n}'}g_k) \\ \prod_{j \in \{1,\dots,\ell\}, j \neq l} T_{a_j(n)}(\Delta_{\underline{n}-\underline{n}'}\bar{f}_j) \cdot T_{a_l(n)}\chi_{\underline{n},\underline{n}'} \cdot 1_{\Lambda'}(\underline{n},\underline{n}') \ d\mu\Bigg) \ > \ 0.
		\end{multline*}
		Let
		\[ g_{j,\underline{n},\underline{n}'}:=\Delta_{\underline{n}-\underline{n}'}\bar{f}_j, \quad j \in \{1,\dots,\ell\}, \quad j \neq l, \quad \underline{n}, \underline{n}' \in G^s,\]
		and
		\[ g_{l,\underline{n},\underline{n}'}:=\chi_{\underline{n},\underline{n}'}, \quad \underline{n},\underline{n}' \in G^s.\]
		Using Cauchy--Schwarz we deduce that
		\[ \liminf_{N \to \infty} \mathbb{E}_{\underline{n}, \underline{n}' \in \Phi_N^s} 1_{\Lambda'}(\underline{n},\underline{n}') \limsup_{k \to \infty} \norm{\mathbb{E}_{n \in \Phi_{N_k}} \prod_{j=1}^{\ell} T_{a_j(n)} g_{j,\underline{n},\underline{n}'}}_{L^2(\mu)}>0.\]
		Since
		\[ 1_{\Lambda'}(\underline{n},\underline{n}')=\prod_{\varepsilon \in \{0,1\}^s} 1_{\Lambda}(\underline{n}^{\varepsilon}) \leq 1_{\Lambda}(\underline{n})\]
		and the set $\Lambda$ has positive lower density, we get
		\begin{equation}\label{eqn: some garbage}
			\liminf_{N \to \infty} \mathbb{E}_{\underline{n}' \in \Phi_N^s} \mathbb{E}_{\underline{n} \in \Lambda \cap \Phi_N^s} \limsup_{k \to \infty} \norm{\mathbb{E}_{n \in \Phi_{N_k}}\prod_{j=1}^{\ell} T_{a_j(n)} g_{j,\underline{n},\underline{n}'}}_{L^2(\mu)}>0.
		\end{equation}
		By assumption, $f_j \in \mathcal{E}((T_g)_{g \in G})$ for $j=l+1,\dots,\ell$, so it follows that $g_{j,\underline{n},\underline{n}'} \in \mathcal{E}((T_g)_{g \in G})$ for $j=l+1,\dots,\ell$, $\underline{n},\underline{n}' \in G^s$. Moreover, since $\chi_{\underline{n}} \in \mathcal{E}((T_g)_{g \in G})$ for all $\underline{n} \in G^s$, we get that $g_{l,\underline{n},\underline{n}'} \in \mathcal{E}((T_g)_{g \in G})$ for all $\underline{n},\underline{n}' \in G^s$. By assumption, property $(P_{l-1})$ of Theorem \ref{framainthm} holds, so \ref{eqn: some garbage} remains unchanged if for $j=1,\dots,l-1$ we replace the functions $g_{j,\underline{n},\underline{n}'}, \underline{n},\underline{n}' \in G^s,$ by their integrals.
		Now, for some group homomorphisms $\alpha_{\underline{n}}: G \to \mathbb{T}$ we have
		\begin{equation*}
			T_{a_l(n)}\chi_{\underline{n}}=e(\alpha_{\underline{n}}(a_l(n)))\chi_{\underline{n}}, \quad n \in G, \underline{n} \in G^s,
		\end{equation*}
		where $\alpha_{\underline{n}}$ is in the spectrum of $(T_g)_{g \in G}$, $\underline{n} \in G^s$. Thus,
		\[ T_{a_l(n)}g_{l,\underline{n},\underline{n}'}=e(\beta_{\underline{n},\underline{n}'}(a_l(n)))g_{l,\underline{n},\underline{n}'},\]
		where
		\[ \beta_{\underline{n},\underline{n}'}:=\sum_{\varepsilon \in \{0,1\}^s} (-1)^{|\varepsilon|} \alpha_{\underline{n}^{\varepsilon}}.\]
		For $j=l+1,\dots,\ell, \underline{n},\underline{n}' \in G^s$, the eigenvalues of the eigenfunctions $g_{j,\underline{n},\underline{n}'}$ are of the form $e(\alpha_{j,\underline{n},\underline{n}'})$, where $\alpha_{j,\underline{n},\underline{n}'} : G \to \mathbb{T}$ are in the spectrum of the $G$-action. Thus,
		\begin{equation}\label{characterspositive}
\begin{split}
			&\liminf_{N \to \infty} \mathbb{E}_{\underline{n}' \in \Phi_N^s} \mathbb{E}_{\underline{n} \in \Lambda \cap \Phi_N^s} \limsup_{k \to \infty} \left| \mathbb{E}_{n \in \Phi_{N_k}} e\left(\beta_{\underline{n},\underline{n}'}(a_l(n))+\sum_{j=l+1}^{\ell} \alpha_{j,\underline{n},\underline{n}'}(a_j(n))\right)\right|\\&>0.
\end{split}
		\end{equation}
		From \ref{characterspositive} we can find elements $\underline{n}'_N \in \Phi_N^s$, and subsets $\Lambda_N$ of $\Lambda \cap \Phi_N^s$ with
		\begin{equation}\label{positivedensitymainprop}
			\liminf_{N \to \infty} \frac{|\Lambda_N|}{|\Phi_N|^s}>0, \quad N \in \mathbb{N},
		\end{equation}
		and such that
		\[ \limsup_{k \to \infty} \left| \mathbb{E}_{n \in \Phi_{N_k}} e(\beta_{\underline{n},\underline{n}'_N}(a_l(n))+\sum_{j=l+1}^{\ell} \alpha_{j,\underline{n},\underline{n}'_N}(a_j(n)))\right|>0, \quad \underline{n} \in \Lambda_N.\]
		Since the mappings $a_1,\dots,a_{\ell}$ are good for equidistribution for $(X,\mu,(T_g)_{g \in G})$ and, for every $j=l+1,\dots,\ell$, $\underline{n} \in \Lambda_N$, and $N \in \mathbb{N}$, the homomorphisms $\beta_{\underline{n},\underline{n}'_N}$, $\alpha_{j,\underline{n},\underline{n}'_N}$ are in the spectrum of $(T_g)_{g \in G}$, we deduce that
		\[ \beta_{\underline{n},\underline{n}'_N}=0, \quad \underline{n} \in \Lambda_N, N \in \mathbb{N},\]
		that is, it is the trivial homomorphism. Now, if $\underline{n}_N^{\varepsilon}:=(n_1^{\varepsilon_1},\dots,n_s^{\varepsilon_s})$, where $n_j^0:=n_j$ and $n_j^1:=n_{j,N}'$ for $j=1,\dots,s$, on recalling the definition of $\beta_{\underline{n},\underline{n}'}$, we observe
		\begin{equation}\label{characterequalitylast}
			\alpha_{\underline{n}}=-\sum_{\varepsilon \in \{0,1\}^s \setminus \underline{0}} (-1)^{|\varepsilon|}\alpha_{\underline{n}_N^{\varepsilon}}, \quad \underline{n} \in \Lambda_N, N \in \mathbb{N}.
		\end{equation}
		This implies that $\alpha_{\underline{n}}$ can be expressed as a sum of mappings, each of which depends only on $s-1$ variables from $n_1,\dots,n_s$. Since \ref{firstineqmainprop} holds for all $\underline{n} \in \Lambda_N \subseteq \Lambda$, $N \in \mathbb{N}$, and \ref{positivedensitymainprop} holds, we deduce that
		\[ \liminf_{N \to \infty} \mathbb{E}_{\underline{n} \in \Lambda_N} \mathrm{Re}\left( \int_X \Delta_{\underline{n}} \tilde{f}_l \cdot \chi_{\underline{n}} \ d\mu\right)>0.\]
		Composing with $T_{n_s'}$ and averaging $n_s'$ over $\Phi_N$, together with the fact that
		\[ T_{n_s'}\chi_{\underline{n}}=e(\alpha_{\underline{n}}(n_s')) \cdot \chi_{\underline{n}}, \quad \underline{n} \in G^s, n_s' \in G,\]
		and then using the Cauchy--Schwarz inequality, we deduce that
		\[ \liminf_{N \to \infty} \mathbb{E}_{\underline{n} \in \Lambda_N} \norm{\mathbb{E}_{n_s' \in \Phi_N} T_{n_s'}(\Delta_{\underline{n}}\tilde{f}_l) \cdot e(\alpha_{\underline{n}}(n_s'))}_{L^2(\mu)}>0.\]
		Using Lemma \ref{vdc} for the average over $n_s'$ and composing with $T_{-n_s'}$ on the integrals that arise we deduce that for each $M \in \mathbb{N}$,
		\[ \limsup_{N \to \infty} \mathrm{Re} \left( \mathbb{E}_{\underline{n} \in \Phi_N^{s}} \mathbb{E}_{h' \in \Phi_M}\mathbb{E}_{h \in \Phi_M-h'} e(\alpha_{\underline{n}}(h)) \cdot 1_{\Lambda_N}(\underline{n}) \int_X \Delta_{(\underline{n},n_{s+1})} \tilde{f}_l \ d\mu\right)>0.\]
		By subadditivity of $\limsup$ (and the pigeonhole principle), we can choose $h'_M \in G$ such that
		\begin{equation}\label{limsupequation} \limsup_{N \to \infty} \mathrm{Re} \left( \mathbb{E}_{\underline{n} \in \Phi_N^{s}} \mathbb{E}_{h \in \Phi_M-h'_M} e(\alpha_{\underline{n}}(h)) \cdot 1_{\Lambda_N}(\underline{n}) \int_X \Delta_{(\underline{n},n_{s+1})} \tilde{f}_l \ d\mu\right)>0.
		\end{equation}
		Let $(H_M)$ be the F\o lner sequence defined by $H_M:=\Phi_M-h'_M$. Then, \ref{limsupequation} implies
		\[ \liminf_{M \to \infty}\limsup_{N \to \infty} \mathrm{Re} \left( \mathbb{E}_{\underline{n} \in \Phi_N^{s}} \mathbb{E}_{n_{s+1} \in H_M} c_N(\underline{n},n_{s+1}) \cdot 1_{\Lambda_N}(\underline{n}) \int_X \Delta_{(\underline{n},n_{s+1})} \tilde{f}_l \ d\mu\right)>0,\]
		where
		\begin{equation}\label{lastmainprop}
			c_N(\underline{n},n_{s+1}):=e(\alpha_{\underline{n}}(n_{s+1})), \quad \underline{n} \in \Phi_N^s, n_{s+1} \in H_M, N, M \in \mathbb{N}.
		\end{equation}
		Hence, by the Cauchy--Schwarz inequality we deduce that
		\[ \liminf_{M \to \infty}\limsup_{N \to \infty} \norm{\mathbb{E}_{\underline{n} \in \Phi_N^{s}} \mathbb{E}_{n_{s+1} \in H_M} c_N(\underline{n},n_{s+1}) \cdot 1_{\Lambda_N}(\underline{n}) \cdot \Delta_{(\underline{n},n_{s+1})} \tilde{f}_l}_{L^2(\mu)}>0.\]
		Using \ref{characterequalitylast} and \ref{lastmainprop} we get that for all $N \in \mathbb{N}$ and $\underline{n} \in \Lambda_N$ we have
		\[ c_N(\underline{n},n_{s+1})=\prod_{j=1}^s b_{j,N}(\underline{n},n_{s+1}), \quad \underline{n} \in \Lambda_N, n_{s+1} \in H_M, N, M \in \mathbb{N},\]
		where $b_{j,N}$ do not depend on the variable $n_j$, for $j=1,\dots,s, N \in \mathbb{N}$, and are bounded by $1$. Since it is also the case that $1_{\Lambda_N}(\underline{n})$ does not depend on the variable $n_{s+1}$, we deduce from Lemma \ref{stepreduction} that
		\[ \nnorm{\tilde{f}_{l}}_{s+1}>0,\]
		completing the proof.
	\end{proof}
	We are now ready to give a proof of Theorem~\ref{framainthm}.
	\begin{proof}[Proof of Theorem~\ref{framainthm}] Fix an ergodic system $(X,\mu,(T_g)_{g \in G})$, a positive integer $\ell \in \mathbb{N}$, and mappings $a_1,\dots,a_{\ell}: G \to G$.
	
	First suppose that $\ell = 1$. The assumption that the sequence $a_1$ is good for equidistribution for the system $(X,\mu,(T_g)_{g\in G})$ implies property $(P_0)$ in the formulation of Theorem~\ref{framainthm} given at the beginning of this subsection. Next, still assuming $\ell = 1$, we show that property $(P_0)$ implies property $(P_1)$. Assume to the contrary that property $(P_0)$ holds and there exists $f_1 \in L^\infty(\mu)$ such that $\int f_1 \ d\mu = 0$ but
\begin{equation*}
\limsup_{N\to\infty} \norm{\mathbb{E}_{n \in \Phi_N} T_{a_1(n)} f_1}_{L^2(\mu)} \ > \ 0.
\end{equation*}
By Proposition~\ref{flprop}, there exists $\tilde{f}_1 \in L^\infty(\mu)$ of the form \ref{fldef} satisfying $\int \tilde{f}_1 \ d\mu = 0$ and
\begin{equation*}
\limsup_{N\to\infty} \norm{\mathbb{E}_{n \in \Phi_N} T_{a_1(n)}\tilde{f}_1}_{L^2(\mu)} \ > \ 0.
\end{equation*}
Since the sequence $a_1$ is good for seminorm estimates for $(X,\mu,(T_g)_{g\in G})$, we deduce that $\nnorm{\tilde{f}_1}_s > 0$ for some $s \geq 1$. If $s = 1$, by ergodicity we have $\int_X \tilde{f}_l \ d\mu \neq 0$, a contradiction. Otherwise, $s \geq 2$. Then, since the assumptions of Proposition \ref{mainprop} are satisfied, we deduce that $\int \tilde{f}_1 \ d\mu \neq 0$, the same contradiction.

	Now suppose that $\ell \geq 2$. We show that property $(P_l)$ holds by induction on $l \in \{0,\dots,\ell\}$.

		Assume first that $l=0$. In this case, $T_gf_j=e(\alpha_j(g))f_j$ for some $\alpha_j: G \to \mathbb{T}$, $j=1,\dots,\ell$. If $\alpha_j=0$ for $j=1,\dots,\ell$, then by ergodicity, $T_gf_j=\int_X f_j \ d\mu$, in which case \ref{mainthmeq} is obvious. Thus, to see that \ref{mainthmeq} holds, it suffices to show that
		\[ \lim_{N \to \infty} \mathbb{E}_{n \in \Phi_N} e(\alpha_1(a_1(n))+\dots+\alpha_{\ell}(a_{\ell}(n)))=0\]
		for $\alpha_1,\dots,\alpha_{\ell} \in \textrm{Spec}((T_g)_{g \in G})$, not all of them zero. This holds by the equidistribution assumption on $a_1,\dots,a_{\ell}$.

		For $l \in \{1,\dots,\ell\}$, we assume that the property $(P_{l-1})$ of Theorem \ref{framainthm} holds. We will show that property $(P_l)$ holds. Note that it suffices to assume that at least one of the functions $f_j$, $j \in \{1,\dots,\ell\} \setminus \{l\},$ has zero integral, by adding and subtracting the respective integrals for each $f_j$. The only noteworthy term is of the form $\prod_{j \in \{1,\dots,\ell\} \setminus \{l\}} \int_X f_j \ d\mu \cdot \mathbb{E}_{n \in \Phi_N} T_{a_l(n)}f_l$, which by the $\ell=1$ case converges in $L^2(\mu)$ to $\prod_{j \in \{1,\dots,\ell\}} \int_X f_j \ d\mu$.

		Our goal is to show that
		\begin{equation*}
			\lim_{N \to \infty} \mathbb{E}_{n \in \Phi_N} T_{a_1(n)}f_1 \cdots T_{a_{\ell}(n)}f_{\ell}=0,
		\end{equation*}
		where convergence takes place in $L^2(\mu)$. Arguing by contradiction, suppose that
		\[ \limsup_{N\to\infty} \norm{\mathbb{E}_{n \in \Phi_N} T_{a_1(n)}f_1 \cdots T_{a_{\ell}(n)}f_{\ell}}_{L^2(\mu)} \ > \ 0.\]
		Using Proposition \ref{flprop}, the same holds replacing $f_l$ with the function $\tilde{f}_l$ defined by the weak limit in \ref{fldef}, namely
		\begin{equation}\label{flmainthm}
			\tilde{f}_{l}:=\lim_{k \to \infty} \mathbb{E}_{n \in \Phi_{N_k}} T_{-a_{l}(n)}g_k \cdot \prod_{j=1, j \neq l}^{\ell} T_{a_j(n)-a_l(n)}\bar{f_j},
		\end{equation}
		for some F\o lner subsequence $(\Phi_{N_k})$, $g_k \in L^{\infty}(\mu)$, $k \in \mathbb{N}$, with all the functions bounded by $1$.
		
Since $f_{l+1},\dots,f_{\ell} \in \mathcal{E}((T_g)_{g \in g})$ and the mappings $a_1,\dots,a_{\ell}$ are good for seminorm estimates for the system $(X,\mu,(T_g)_{g \in G})$, we deduce 
		that $\nnorm{\tilde{f}_l}_s>0$ for some $s \in \mathbb{N}$. If $s = 1$, by ergodicity we have $\int_X \tilde{f}_l \ d\mu \neq 0$. Otherwise, $s \geq 2$. Then, since the assumptions of Proposition \ref{mainprop} are satisfied, we again have $\int_X \tilde{f}_l \ d\mu \neq 0$.

		In either case, using \ref{flmainthm}, one obtains
		\[ \lim_{k \to \infty} \mathbb{E}_{n \in \Phi_{N_k}} \int_X T_{-a_l(n)}g_k \cdot \prod_{j \in \{1,\dots,\ell\}, j \neq l} T_{a_j(n)-a_l(n)} \bar{f}_j \ d\mu \neq 0.\]
		Composing with $T_{a_l(n)}$ and using the Cauchy--Schwarz inequality, we get that
		\begin{equation}\label{eqn: some more garbage} \limsup_{k\to\infty} \norm{\mathbb{E}_{n \in \Phi_{N_k}} \prod_{j \in \{1,\dots,\ell\}, j \neq l} T_{a_j(n)} f_j}_{L^2(\mu)} \ > \ 0.\end{equation}
	Since at least one of the functions $f_j$, for $j \in \{1,\dots,\ell\} \setminus \{l\},$ has zero integral, and $f_{l+1},\dots,f_{\ell} \in \mathcal{E}((T_g)_{g \in G})$, using property $(P_{l-1})$ of Theorem \ref{framainthm} (with $f_l:=1\in \mathcal{E}((T_g)_{g \in G}))$, we would find that the limit of the left-hand side of \ref{eqn: some more garbage} is zero, a contradiction. We conclude that property $(P_l)$ holds.
	\end{proof}
	\section{Joint ergodicity of independent polynomial field actions}\label{sec: 4}
	Throughout this section, we will work with fields $F$ with characteristic zero.
	\subsection{Equidistribution}
	The purpose of this short subsection is to prove the following result:
	\begin{theorem}\label{characterequidistribution}
		Let $F$ be a countable field with characteristic zero. Let $\chi_1,\dots,\chi_k \in \hat{F}$ not all trivial. Let $(\Phi_N)$ be a F\o lner sequence in $F$ and let $p_1,\dots,p_k \in F[x]$ be linearly independent polynomials with $p_i(0)=0$, $i=1,\dots,k$. Then
		\begin{equation}\label{qcharacteravg}
			\lim_{N \to \infty} \mathbb{E}_{n \in \Phi_N} \chi_1(p_1(n))\cdots \chi_k(p_k(n))=0.
		\end{equation}
	\end{theorem}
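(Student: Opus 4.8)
The plan is to reduce the multi-variable exponential average to a single-variable polynomial phase and then lower its degree by van der Corput induction. The key observation is that $\hat{F}$ is naturally an $F$-vector space: for $c \in F$ and $\chi \in \hat{F}$ set $(c \cdot \chi)(m) := \chi(cm)$, which is again a character since $m \mapsto cm$ is an endomorphism of $(F,+)$; the vector-space axioms are immediate, and scalar multiplication by a nonzero $c$ is invertible (with inverse $c^{-1}\cdot$) because $F$ is a field. Writing $p_i(x) = \sum_{d \geq 1} c_{i,d}\, x^d$ (there is no constant term, by hypothesis) and regrouping the product by powers of $n$, one obtains
\[ \chi_1(p_1(n)) \cdots \chi_k(p_k(n)) = \prod_{d \geq 1} \eta_d(n^d), \qquad \eta_d := \sum_{i=1}^k c_{i,d} \cdot \chi_i \in \hat{F}. \]

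First I would check that some $\eta_d$ is nontrivial. If every $\eta_d$ were trivial, then $\sum_{i=1}^k c_{i,d} \cdot \chi_i = 0$ in $\hat{F}$ for each $d$, i.e. the vector $(\chi_1, \dots, \chi_k) \in \hat{F}^k$ lies in the kernel of the coefficient matrix $(c_{i,d})$ viewed as a matrix over $F$ acting on $\hat{F}^k$. Linear independence of $p_1, \dots, p_k$ over $F$ says exactly that this matrix has full column rank $k$, hence admits a left inverse with entries in $F$; applying it forces all the $\chi_i$ to be trivial, contrary to hypothesis. Thus there is a largest index $D \geq 1$ with $\eta_D \neq 0$.

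It then suffices to prove, by induction on $D$, the following claim: for every F\o lner sequence $(\Phi_N)$ in $F$ and all $\eta_0, \dots, \eta_D \in \hat{F}$ with $\eta_D \neq 0$, one has $\lim_{N \to \infty} \mathbb{E}_{n \in \Phi_N} \prod_{d=0}^D \eta_d(n^d) = 0$. The base case $D = 1$ is the standard fact that a nontrivial character averages to zero along any F\o lner sequence, which follows from Theorem~\ref{meanergthm} (or directly from the F\o lner property). For the inductive step I would apply Lemma~\ref{vdc} with $x_n := \prod_{d=0}^D \eta_d(n^d)$. For fixed $h$, the correlation $x_{n+h}\overline{x_n} = \prod_{d} \eta_d\bigl((n+h)^d - n^d\bigr)$ is again a phase of the same shape but of degree $D-1$ in $n$, and since only the $d = D$ term reaches degree $D-1$, its top-degree character is $(Dh) \cdot \eta_D$. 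Because $F$ has characteristic zero, $Dh \neq 0$ whenever $h \neq 0$, so $(Dh) \cdot \eta_D \neq 0$; the inductive hypothesis (applied to the shifted F\o lner sequence $(\Phi_N + h')$) then sends every $h \neq 0$ term to zero, while the diagonal $h = 0$ contributes at most $1/|\Phi_M|$. Letting $M \to \infty$ yields $\lim_N \mathbb{E}_{n \in \Phi_N} x_n = 0$.

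The conceptual heart, and the step I expect to require the most care, is the reduction above: recognizing $\hat{F}$ as an $F$-vector space is what turns the independence hypothesis on the polynomials into honest linear algebra and guarantees a nontrivial top-degree character. Once that is in place, the van der Corput induction is routine; the only points demanding attention are the bookkeeping of the leading coefficient after differencing and the use of characteristic zero to keep that coefficient nonzero.
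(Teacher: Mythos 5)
Your proof is correct, but it takes a genuinely different route from the paper's. The paper first preprocesses the characters: whenever a multiplicative relation $\chi_{r_1}(a_{r_1}n)\cdots\chi_{r_t}(a_{r_t}n)=\chi_s(a_sn)$ holds, it substitutes to eliminate $\chi_s$ (adjusting the polynomials and checking that linear independence is preserved) until no non-trivial relations remain; it then isolates the polynomials of maximal degree $d$, applies a van der Corput inequality $d-1$ times to reduce to a product $\chi_t(a_tn)\cdots\chi_k(a_kn)$ of characters at linear polynomials, and uses the no-relations assumption to see this is a single non-trivial character, whose F\o lner averages vanish by the limit-point argument. You instead regroup the product by powers of $n$ using the $F$-module structure of $\hat{F}$, so the whole expression becomes one ``polynomial phase'' $\prod_d \eta_d(n^d)$; linear independence of the $p_i$ translates, via left-invertibility over $F$ of the full-column-rank coefficient matrix, into non-triviality of some $\eta_d$ with $d\geq 1$, and a single induction on the top degree $D$ (van der Corput plus characteristic zero giving $(Dh)\cdot\eta_D\neq 0$ for $h\neq 0$) finishes the argument. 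Your route packages the independence hypothesis as honest linear algebra over $F$ rather than a case analysis on character relations, which is arguably cleaner; both proofs bottom out at the same elementary fact that a non-trivial character of $F$ averages to zero along any F\o lner sequence.
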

	\begin{proof}
		We may assume without loss of generality that there are no non-trivial relations between the characters $\chi_i$. Indeed, suppose such a relation existed, so that there exist distinct $r_1,\ldots, r_t, s \in \{1,\ldots, k\}$ and (not necessarily distinct) elements $a_{r_1},\ldots,a_{r_t}, a_s \in F$, $a_s \neq 0$, such that
		\[ \chi_{r_1}(a_{r_1}n)\cdots \chi_{r_t}(a_{r_t}n)=\chi_s(a_sn)\]
		holds for all $n \in F$.
		Clearly, we may as well assume that not all of $a_{r_1},\dots,a_{r_t}$ are zero, for otherwise, $\chi_s(p_s(n))=\chi_s\left(a_s\cdot\frac{p_s(n)}{a_s}\right)=1$, which would imply that we could eliminate the character $\chi_s$ from the averages we are considering. Thus, we may assume that not all of $a_{r_1},\dots,a_{r_t}$ are zero. Then, we notice that
		\[ \chi_s(p_s(n))=\chi_s\left(a_s\cdot\frac{p_s(n)}{a_s}\right)=\chi_{r_1}\left(\frac{a_{r_1}p_s(n)}{a_s}\right)\dots\chi_{r_t}\left(\frac{a_{r_t}p_s(n)}{a_s}\right),\]
		which implies that the averages in \ref{qcharacteravg} can be simplified to
		\[  \mathbb{E}_{n \in \Phi_N} \prod_{i \neq s} \chi_i(\tilde{p}_i(n)),\]
		where $\tilde{p}_i:=p_i$ if $i \notin \{r_1,\dots,r_t\}$ and $\tilde{p}_i:=p_i+\frac{a_i}{a_s}p_s$ if $i \in \{r_1,\dots,r_t\}$. 
		It is straightforward to check that the family of polynomials $\tilde{p}_i$ is linearly independent, because the family of $p_i$ is.

		Thus, it is enough to show that \ref{qcharacteravg} holds under the extra assumption that the characters $\chi_i$ do not satisfy any non-trivial relation.
		
		After reordering and relabeling if necessary, we may assume that for some $t \in \{1,\dots,k\}$, $p_t,\dots,p_k$ are all of degree $d:=\max_{1\leq i \leq k} \deg p_i$. Applying \cite[Theorem~2.12]{bm}, another convenient form of the van der Corput trick, $d-1$ times, we see that it suffices to show that for all $h_1,\dots,h_{d-1} \in F\setminus \{0\}$ we have
		\begin{equation}\label{qcharacteravg2}
			\lim_{N \to \infty} \mathbb{E}_{n \in \Phi_N} \chi_t(\Delta_{h_1}\dotso\Delta_{h_{d-1}}p_t(n))\cdots \chi_k(\Delta_{h_1}\dotso\Delta_{h_{d-1}}p_k(n))=0,
		\end{equation}
		where, given $p \in F[x]$ and $h \in F \setminus \{0\}$, we put $(\Delta_hp)(n):=p(n+h)-p(n)$. Since $F$ has characteristic 0, we can find $a_{h_1,\dots,h_{d-1}; i} \in F \setminus\{0\}$, $b_{h_1,\dots,h_{d-1}; i} \in F$ such that $\Delta_{h_1}\dotso\Delta_{h_{d-1}}p_i(n)=a_{h_1,\dots,h_{d-1}; i}n+b_{h_1,\dots,h_{d-1}; i}$. Thus, to show \ref{qcharacteravg2} holds, it is enough to check that
		\[ \lim_{N \to \infty} \mathbb{E}_{n \in \Phi_N} \chi_t(a_{h_1,\dots,h_{d-1}; t}n)\cdots \chi_k(a_{h_1,\dots,h_{d-1}; k}n)=0. \]
		Since the characters under consideration do not satisfy any non-trivial relation, it is clearly enough to check that for any $\chi \in \hat{F} \setminus\{1\}$ we have
		\[ \lim_{N \to \infty} \mathbb{E}_{n \in \Phi_N}\chi(n)=0.\]
		This is straightforward: suppose that $z$ is a limit point of the sequence $(\mathbb{E}_{n \in \Phi_N}\break \chi(n))_{N \in \mathbb{N}}$. Since $\chi$ is a non-trivial character, there exists $q \in F$ such that $\chi(q) \neq 1$. Suppose that
		\[ \lim_{k \to \infty} \mathbb{E}_{n \in \Phi_{N_k}}\chi(n)=z.\]
		Since $(\Phi_{N_k})$ is still a F\o lner sequence, we have that
		\[ z=\lim_{k \to \infty}\mathbb{E}_{n \in \Phi_{N_k}}\chi(n+q)=z\chi(q).\]
		Given that $\chi(q)\neq 1$, we see that $z=0$. The sequence $(\mathbb{E}_{n \in \Phi_N}\chi(n))_{N \in \mathbb{N}}$ is in the closed disk of radius $1$ in $\mathbb{C}$, which is compact. This implies that at least one limit point must exist for $(\mathbb{E}_{n \in \Phi_N}\chi(n))_{N \in \mathbb{N}}$, and it must be $0$. No other limit point is possible, so the sequence must converge to $0$, completing the proof.
	\end{proof}
	\subsection{Seminorm estimates}
	The purpose of this subsection is to show that families of independent polynomials $p_1,\dots,p_k \in F[x]$ are good for seminorm estimates, one of the key requirements of Theorem~\ref{framainthm}. Thus, we want to show the following theorem, which proves a slightly stronger property.
	\begin{theorem}\label{BTZ factor is characteristic} Let $F$ be a countable field with characteristic zero. Let $d \in \mathbb{N}$. For any $r, b \in \mathbb{N}$, there exists $k \in \mathbb{N}$ such that for any family of nonconstant, essentially distinct\footnote{We say that a family of polynomials $p_1,\dots,p_r \in F[x_1,\dots,x_d]$ is \emph{essentially distinct} if $p_i-p_j$ is a non-constant polynomial for $i \neq j$.} polynomials $p_1, \ldots, p_r : F^d \to F$ of degree less than $b$ and any $f_1, \ldots, f_r \in L^\infty(X)$ with $\nnorm{f_1}_k = 0$, one has
		\begin{equation*} \lim_{N\to\infty} \mathbb{E}_{\mathbf{g}\in\Phi_N} T_{p_1(\mathbf{g})} f_1 \cdots T_{p_r(\mathbf{g})} f_r = 0
		\end{equation*}
		in $L^2(X)$ for any F\o lner sequence $(\Phi_N)$ in $(F^d,+)$.
	\end{theorem}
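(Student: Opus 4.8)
The plan is to establish the stronger quantitative estimate that there exists $k=k(r,b,d)\in\mathbb{N}$ with
\[ \limsup_{N\to\infty}\norm{\mathbb{E}_{\mathbf{g}\in\Phi_N} T_{p_1(\mathbf{g})}f_1\cdots T_{p_r(\mathbf{g})}f_r}_{L^2(\mu)} \ \leq \ \nnorm{f_1}_k,\]
from which the theorem is immediate once $\nnorm{f_1}_k=0$. After normalizing so that $\norm{f_i}_{L^\infty(\mu)}\leq 1$ for every $i$, the whole argument is a polynomial exhaustion (PET) induction in the sense of Bergelson, carried out over the group $G=F^d$ by means of the van der Corput estimates of Lemma~\ref{vdc} and Lemma~\ref{2D trick}, which are already stated for arbitrary (second-countable locally compact) abelian groups.

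First I would fix the PET scaffolding. To each finite family of nonconstant, essentially distinct polynomials $F^d\to F$ I attach a complexity, namely a weight vector recording, for each equivalence class under the relation $\deg(p-q)<\deg p$ and each degree, the number of polynomials involved, and I order these complexities by a well-founded ordering. The basic reduction step takes the squared $L^2$-norm of the average, applies the two-parameter van der Corput trick of Lemma~\ref{2D trick}(2) to introduce a difference parameter $\mathbf{h}$, and then composes the resulting integrals with a suitable translate $T_{-q(\mathbf{g})}$ for one of the polynomials $q$ of the current family. Because $F$ has characteristic zero, the differenced polynomial $\Delta_{\mathbf{h}}p(\mathbf{g}):=p(\mathbf{g}+\mathbf{h})-p(\mathbf{g})$ is genuinely nonconstant of strictly smaller degree whenever $p$ is nonconstant, so each such step yields a new family (in the variable $\mathbf{g}$, with $\mathbf{h}$ a parameter) of strictly smaller complexity. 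The essential bookkeeping is to never normalize against the descendant of $p_1$: by always choosing $q$ among the other polynomials, the polynomial carrying $f_1$ stays nonconstant and essentially distinct from the rest throughout the induction.

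After finitely many steps—the number $k$ being bounded purely in terms of $r$, $b$, $d$, since the PET ordering is well-founded and the complexity of the starting family is controlled by these parameters—the induction terminates in a family in which the descendant of $p_1$ is a fixed nonzero linear form, isolated from the remaining polynomials. At this point one further passage through Lemma~\ref{vdc} in the surviving linear direction, combined with the iterated description \ref{seminormdefinition} of the seminorms, identifies the resulting average of box-type inner products with $\nnorm{f_1}_k$, the contributions of $f_2,\dots,f_r$ being absorbed into their $L^\infty$ bounds. Unwinding the successive squarings then delivers the displayed estimate.

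The main obstacle is precisely this bookkeeping: verifying that the complexity strictly decreases at every step while simultaneously preserving nonconstancy and essential distinctness of the differenced family and keeping the distinguished polynomial alive and linear at termination. Here the characteristic-zero hypothesis is indispensable, since in positive characteristic repeated differencing can annihilate a polynomial prematurely and destroy the degree-lowering mechanism. A secondary point requiring care is that the van der Corput passages generate averages along auxiliary F\o lner sequences in $G$ and in its powers; one must check, exactly as in the proof of Lemma~\ref{stepreduction}, that the final bound is insensitive to these choices, which is guaranteed by the fact that the seminorms \ref{seminormdefinition} do not depend on the F\o lner sequence used to define them.
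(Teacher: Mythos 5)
Your overall strategy---a PET induction over $G=F^d$ driven by the van der Corput estimates of Lemmas~\ref{vdc} and \ref{2D trick}---is the same as the paper's, but the proposal has genuine gaps at exactly the points where the argument is delicate. First, the termination of the induction is not ``one further passage through Lemma~\ref{vdc}.'' When the weight induction bottoms out you are left with a system of several essentially distinct \emph{linear} forms on $F^d$ (the van der Corput steps roughly double the family), each carrying its own function, and extracting $\nnorm{f_1}_k$ from this configuration is itself a nontrivial induction on the number of linear forms (Proposition~\ref{linear inequality for char thm} in the paper). More importantly, that induction rests on the identity $\lim_{N\to\infty}\mathbb{E}_{\mathbf{g}\in\Phi_N}\nnorm{f\cdot T_{p(\mathbf{g})}\bar f}_k^{2^k}=\nnorm{f}_{k+1}^{2^{k+1}}$ for a nonzero linear form $p:F^d\to F$ (Lemmas~\ref{easy linear bound} and \ref{linear seminorm identity}), which holds because the image of a nonzero linear form over a \emph{field} is all of $F$, so the invariant subspace of the subaction coincides with that of the full $F$-action. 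This is where the field hypothesis genuinely enters; your proposal locates the role of the hypotheses only in the degree-lowering mechanism of differencing, and without this identification the quantity produced at termination is a seminorm relative to a subaction, not the Gowers--Host--Kra seminorm $\nnorm{f_1}_k$ of the statement. (This is precisely the point that forces the constants $[R:J]$ in the ring version of Section~5.)

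Second, your bookkeeping rule ``never normalize against the descendant of $p_1$'' keeps $f_1$ attached to a surviving polynomial, but it does not guarantee that this polynomial has \emph{maximal} degree in the current system, and the induction needs this: in the van der Corput step the function attached to a linear polynomial $p_i$ becomes $f_i\cdot T_{p_i(\mathbf{h})-p_i(\mathbf{h}')}\bar f_i$, so if $p_1$ is not of top degree the seminorm hypothesis on $f_1$ is lost after a single step. The paper handles this with a separate reduction from nonstandard to standard systems, adding a degree-$b$ polynomial $q(\mathbf{g})$ to every term via Lemma~\ref{2D trick}(2) before running Proposition~\ref{BTZ factor is characteristic warmup}; your proposal omits this entirely. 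Two smaller inaccuracies: the claim that $\Delta_{\mathbf{h}}p$ is ``nonconstant of strictly smaller degree whenever $p$ is nonconstant'' fails for linear $p$ (the correct mechanism is that the equivalence class of the normalizing polynomial vanishes or splits into classes of lower degree), and the clean bound $\limsup_{N\to\infty}\norm{\cdot}_{L^2(\mu)}\leq\nnorm{f_1}_k$ with constant $1$ is stronger than what the method yields---each van der Corput passage costs a square root, and the paper proves a quantitative inequality (with a constant $C$) only in the linear case, settling for the qualitative vanishing statement elsewhere, which is all the theorem requires.
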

	\begin{remark} We will only care about $d = 1$, but the method we use here, introduced in \cite{leibmanconvergence}, makes use of the ``dimension increment trick'', so we will show the result for $F^d$.
	\end{remark}
	
	A stepping stone in proving Theorem~\ref{BTZ factor is characteristic} is the following proposition.
	\begin{proposition} \label{linear inequality for char thm} Let $F$ be a countable field with characteristic zero. Let $d \in \mathbb{N}$. Let $p_1, \ldots, p_r : F^d \to F$ be essentially distinct polynomials of degree one\footnote{We use degree to mean total degree.}. Then there exists a constant $C$ such that for any $f_1, \ldots, f_r \in L^\infty(X)$, we have
		\begin{equation*} \limsup_{N \to \infty} \norm{\mathbb{E}_{\mathbf{g}\in\Phi_N} T_{p_1(\mathbf{g})}f_1\cdots T_{p_r(\mathbf{g})}f_r}_{L^2(X)} \leq C\nnorm{f_1}_{r} \prod_{i=2}^r \norm{f_i}_{L^\infty(X)}
		\end{equation*}
		for any F\o lner sequence $(\Phi_N)$ in $F^d$.
	\end{proposition}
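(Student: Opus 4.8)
The plan is to induct on $r$, using a single van der Corput step (Lemma~\ref{vdc}) to pass from $r$ essentially distinct linear forms to $r-1$, at the cost of replacing each $f_i$ by a derivative and raising the controlling seminorm index by one. Throughout I absorb the constant terms, writing $p_i(\mathbf{g})=L_i(\mathbf{g})$ for nonzero, pairwise distinct linear forms $L_i:F^d\to F$ (replacing $f_i$ by $T_{c_i}f_i$ changes nothing), and by multilinearity in $(f_1,\dots,f_r)$ I may assume $\norm{f_i}_{L^\infty(X)}\le 1$ for $i\ge 2$, so that it suffices to produce an absolute constant $C_r$.

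For the base case $r=1$, the maps $\mathbf{g}\mapsto T_{L_1(\mathbf{g})}$ form a unitary action of $F^d$, and since $L_1\neq 0$ over a field it is surjective onto $F$; hence the mean ergodic theorem (Theorem~\ref{meanergthm}) gives $\mathbb{E}_{\mathbf{g}\in\Phi_N}T_{L_1(\mathbf{g})}f_1\to\mathbb{E}[f_1\mid\mathcal I]$ in $L^2(X)$, where $\mathcal I$ is the $(T_g)$-invariant $\sigma$-algebra. A direct computation from \ref{seminormdefinition} shows $\nnorm{f_1}_1=\norm{\mathbb{E}[f_1\mid\mathcal I]}_{L^2(X)}$, so the claim holds with $C_1=1$ (using also $\nnorm{f_1}_1\le\nnorm{f_1}_r$). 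Note this makes no use of ergodicity.

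For the inductive step ($r\ge 2$), set $x_\mathbf{g}:=\prod_{i=1}^r T_{L_i(\mathbf{g})}f_i$. Grouping factors by index and using that each $T_{L_i}$ is measure-preserving, one finds the clean identity $\langle x_{\mathbf{g}+\mathbf{k}},x_\mathbf{g}\rangle=\int_X\prod_{i=1}^r T_{L_i(\mathbf{g})}\big(\overline{\Delta_{L_i(\mathbf{k})}f_i}\big)\,d\mu$. The crucial bookkeeping choice is to compose with $T_{-L_2(\mathbf{g})}$ (rather than $T_{-L_1(\mathbf{g})}$): this pulls $f_2$'s derivative out as a factor $\overline{\Delta_{L_2(\mathbf{k})}f_2}$ constant in $\mathbf{g}$, while leaving $f_1$'s derivative attached to the \emph{nonzero} form $L_1-L_2$, so that $f_1$ can remain the designated function. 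The forms $\{L_i-L_2:i\neq 2\}$ are $r-1$ essentially distinct linear forms, so Cauchy--Schwarz in $X$ followed by the inductive hypothesis — applied with $\overline{\Delta_{L_1(\mathbf{k})}f_1}$ as the controlled function, which is legitimate since the statement is symmetric in the roles of the $f_i$ — yields, for each fixed $\mathbf{k}$ and any Følner sequence,
\[ \limsup_{N\to\infty}\Big|\mathbb{E}_{\mathbf{g}\in\Phi_N}\langle x_{\mathbf{g}+\mathbf{k}},x_\mathbf{g}\rangle\Big|\ \le\ C_{r-1}\,\nnorm{\Delta_{L_1(\mathbf{k})}f_1}_{r-1}. \]
Feeding this into Lemma~\ref{vdc}, averaging over the differencing variable, applying Jensen's inequality to the convex power $t\mapsto t^{2^{r-1}}$, and invoking the definition \ref{seminormdefinition}, $\nnorm{f_1}_r^{2^r}=\lim_N\mathbb{E}_{w\in\Phi_N}\nnorm{\Delta_{w}f_1}_{r-1}^{2^{r-1}}$, gives $\limsup_N\norm{\mathbb{E}_\mathbf{g} x_\mathbf{g}}_{L^2(X)}^2\le C_{r-1}\nnorm{f_1}_r^2$, closing the induction with $C_r=C_{r-1}^{1/2}$.

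The main obstacle is precisely this last step. The seminorm $\nnorm{f_1}_r$ is defined by differencing over the acting group $F$ (the variable $w$ above), whereas van der Corput forces me to average the derivative $\Delta_{L_1(\mathbf{k})}f_1$ over $\mathbf{k}$ in the ambient group $F^d$ through the surjection $L_1$, and moreover over the nested Følner averages produced by Lemma~\ref{vdc}. Justifying that this pushed-forward, iterated average computes the same limit as the single Følner average in the definition of the seminorm — independently of all the Følner sequences involved — is exactly where the dimension-increment philosophy and the results on combining iterated limits (\cite{blcubicaverages}, \cite{zorinkranich}) are needed, and is the technically delicate part of the argument.
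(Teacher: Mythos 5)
Your architecture is essentially the paper's: induct on $r$, with the base case handled by the mean ergodic theorem (using that a nonzero linear form on $F^d$ is surjective onto $F$, so the relevant invariant subspace is the full $F$-invariant one), and the inductive step handled by a van der Corput differencing that reduces $r$ essentially distinct linear forms to $r-1$ while attaching the derivative of $f_1$ to a nonzero form. However, the step you flag as ``the main obstacle'' is a genuine gap, and it is exactly the content that the paper isolates and proves as a separate lemma (Lemma~\ref{linear seminorm identity}): for a degree-one $p:F^{2d}\to F$ with $p(\mathbf{0})=0$ and \emph{any} F\o lner sequence $(\Psi_N)$ in $F^{2d}$, one has $\lim_{N}\mathbb{E}_{\mathbf{k}\in\Psi_N}\nnorm{f\cdot T_{p(\mathbf{k})}\bar f}_{k}^{2^{k}}=\nnorm{f}_{k+1}^{2^{k+1}}$. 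This is not a matter of rearranging iterated limits: the pushforward of a F\o lner sequence in $F^{2d}$ under a linear form is not in any evident sense a F\o lner sequence in $F$, so the results of \cite{blcubicaverages} and \cite{zorinkranich} on merging iterated averages do not by themselves yield the identity. The paper's proof goes through the cube systems $(X^{[k]},\mu^{[k]})$: it applies the mean ergodic theorem to the action $\mathbf{k}\mapsto T^{[k]}_{p(\mathbf{k})}$ on $L^2(X^{[k]})$ and observes that the resulting projection coincides with the projection onto the functions invariant under the full $F$-action on $X^{[k]}$ --- the same field-specific observation as in your base case, but now on the cube space. Without supplying this argument (or an equivalent), the induction does not close; indeed this identity is precisely what fails for general rings and forces the constant $[R:J]$ in the ring analogue (Lemma~\ref{linear seminorm identity rings}).

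A secondary, fixable point: you run the differencing through Lemma~\ref{vdc}, which produces nested averages $\mathbb{E}_{h'\in\Phi_M}\mathbb{E}_{h\in\Phi_M-h'}$ over the differencing variable that you then have to disentangle before the seminorm identity can be invoked. The paper avoids this entirely by using part (1) of Lemma~\ref{2D trick}: the differencing variables $(\mathbf{h},\mathbf{h}')$ range over $S^2$ for a finite set $S$, the derivative of $f_1$ appears as $f_1\cdot T_{p_1(\mathbf{h}'-\mathbf{h})}\bar f_1$ after exploiting invariance of $\nnorm{\cdot}_{r-1}$, and one then takes $S=\Psi_N$ so that $(\Psi_N\times\Psi_N)$ is a F\o lner sequence in $F^{2d}$ and $(\mathbf{h},\mathbf{h}')\mapsto p_1(\mathbf{h}'-\mathbf{h})$ is a degree-one polynomial vanishing at $\mathbf{0}$, to which the seminorm identity applies verbatim. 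I recommend adopting that variant of van der Corput and then proving the seminorm identity as above.
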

	\begin{corollary} \label{linear case of char thm} With assumptions as in Proposition \ref{linear inequality for char thm}, if additionally $\nnorm{f_1}_{r+1} = 0$, then we have
		\begin{equation*} \lim_{N\to\infty} \mathbb{E}_{\mathbf{g}\in\Phi_N} T_{p_1(\mathbf{g})}f_1\cdots T_{p_r(\mathbf{g})}f_r = 0
		\end{equation*}
		in $L^2(X)$ for any F\o lner sequence $(\Phi_N)$ in $F^d$.
	\end{corollary}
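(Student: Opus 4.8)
The plan is to obtain this corollary as an immediate consequence of Proposition~\ref{linear inequality for char thm}, once we exploit the monotonicity of the Gowers--Host--Kra seminorms. Recall from Section~\ref{sec: 3.1} that for every $f \in L^\infty(X)$ and every $s \geq 1$ one has $\nnorm{f}_s \leq \nnorm{f}_{s+1}$. Since $r \geq 1$, the added hypothesis $\nnorm{f_1}_{r+1} = 0$ thus yields
\[ 0 \ \leq \ \nnorm{f_1}_r \ \leq \ \nnorm{f_1}_{r+1} \ = \ 0, \]
whence $\nnorm{f_1}_r = 0$.

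With this reduction in hand, I would simply feed the vanishing of $\nnorm{f_1}_r$ into the inequality supplied by Proposition~\ref{linear inequality for char thm}. For any F\o lner sequence $(\Phi_N)$ in $F^d$ it gives
\[ \limsup_{N \to \infty} \norm{\mathbb{E}_{\mathbf{g}\in\Phi_N} T_{p_1(\mathbf{g})}f_1\cdots T_{p_r(\mathbf{g})}f_r}_{L^2(X)} \ \leq \ C\nnorm{f_1}_{r} \prod_{i=2}^r \norm{f_i}_{L^\infty(X)} \ = \ 0, \]
the last equality holding because $\nnorm{f_1}_r = 0$ while the remaining factors $\norm{f_i}_{L^\infty(X)}$ are finite. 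As the left-hand side is the limit superior of a sequence of nonnegative reals and is bounded above by $0$, it must equal $0$; consequently the limit exists and the displayed average converges to $0$ in $L^2(X)$, as claimed.

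There is essentially no obstacle here: the corollary is a one-step deduction from the proposition. The only point deserving a word is the index drop from $r+1$ to $r$, which is licensed precisely by the seminorm monotonicity recalled above. I expect the reason the hypothesis is phrased with the larger index $r+1$ --- rather than the $r$ that the proposition already handles --- is that this is the form convenient for the dimension-increment induction that will be used to prove Theorem~\ref{BTZ factor is characteristic}.
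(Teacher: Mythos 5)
Your proof is correct and is essentially the argument the paper intends (the paper states this corollary without proof as an immediate consequence of Proposition~\ref{linear inequality for char thm}): the monotonicity $\nnorm{f_1}_r \le \nnorm{f_1}_{r+1}$, valid for $r \ge 1$, reduces the hypothesis to $\nnorm{f_1}_r = 0$, and the proposition then forces the $\limsup$ of the nonnegative $L^2$-norms to vanish. Your closing remark about why the index is $r+1$ rather than $r$ is also the right explanation: it matches the form $\nnorm{f_1}_k = 0$ needed for the base case of the PET induction in Proposition~\ref{BTZ factor is characteristic warmup}.
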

	We prove two lemmas before showing Proposition \ref{linear inequality for char thm}.
	\begin{lemma}\label{easy linear bound} Let $F$ be a countable field with characteristic zero. Let $d \in \mathbb{N}$. Let $p : F^d \to F$ be a degree one polynomial. Then for any $f \in L^\infty(X)$, we have
		\begin{equation*} \lim_{N\to\infty} \norm{\mathbb{E}_{\mathbf{g}\in\Phi_N} T_{p(\mathbf{g})}f}_{L^2(X)} \leq \nnorm{f}_1
		\end{equation*}
		for any F\o lner sequence $(\Phi_N)$ in $F^d$.
	\end{lemma}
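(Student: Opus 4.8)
The plan is to recognize the average as a mean-ergodic average for an auxiliary $F^d$-action and to identify the resulting projection with the one computing $\nnorm{f}_1$. First I would split off the constant term: write $p(\mathbf{g}) = L(\mathbf{g}) + c$, where $L : F^d \to F$ is the (necessarily nonzero, since $\deg p = 1$) $F$-linear homogeneous part and $c \in F$. Because each $T_g$ is a unitary operator on $L^2(\mu)$ and the action is abelian, we have $T_{p(\mathbf{g})} = T_c T_{L(\mathbf{g})}$, so factoring out the isometry $T_c$ gives
\[ \norm{\mathbb{E}_{\mathbf{g}\in\Phi_N} T_{p(\mathbf{g})} f}_{L^2(\mu)} = \norm{\mathbb{E}_{\mathbf{g}\in\Phi_N} T_{L(\mathbf{g})} f}_{L^2(\mu)}, \]
and it suffices to treat the homogeneous case $p = L$.

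Next I would observe that $S_{\mathbf{g}} := T_{L(\mathbf{g})}$ defines a unitary action of the countable abelian group $F^d$ on $L^2(\mu)$, since $L$ is a homomorphism and hence $S_{\mathbf{g}}S_{\mathbf{h}} = T_{L(\mathbf{g})+L(\mathbf{h})} = S_{\mathbf{g}+\mathbf{h}}$. Applying the mean ergodic theorem for countable abelian groups (Theorem~\ref{meanergthm}) to $(S_{\mathbf{g}})_{\mathbf{g}\in F^d}$ along the F\o lner sequence $(\Phi_N)$ in $F^d$, the average $\mathbb{E}_{\mathbf{g}\in\Phi_N} T_{L(\mathbf{g})} f$ converges in $L^2(\mu)$ to the orthogonal projection of $f$ onto the subspace $\{ f' : T_{L(\mathbf{g})} f' = f' \text{ for all } \mathbf{g}\in F^d\}$. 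The key structural point is that this invariant subspace coincides with the $(T_n)_{n\in F}$-invariant functions: as $L$ is a nonzero $F$-linear map into the one-dimensional $F$-vector space $F$, its image is all of $F$, so $T_{L(\mathbf{g})} f' = f'$ for all $\mathbf{g}$ is equivalent to $T_n f' = f'$ for all $n \in F$. Writing $P$ for the projection onto the $(T_n)_{n\in F}$-invariants, we thus obtain $\lim_{N\to\infty}\norm{\mathbb{E}_{\mathbf{g}\in\Phi_N} T_{L(\mathbf{g})} f}_{L^2(\mu)} = \norm{Pf}_{L^2(\mu)}$.

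Finally I would verify $\norm{Pf}_{L^2(\mu)} = \nnorm{f}_1$ directly from the definition \ref{seminormdefinition}. Unwinding it at $s = 0$, and using $T_n\bar f = \overline{T_n f}$, gives
\[ \nnorm{f}_1^2 = \lim_{N\to\infty} \mathbb{E}_{n\in\Phi_N'} \int_X f\cdot T_n\bar f \ d\mu = \lim_{N\to\infty} \mathbb{E}_{n\in\Phi_N'} \langle f, T_n f\rangle = \langle f, Pf\rangle = \norm{Pf}_{L^2(\mu)}^2, \]
where $(\Phi_N')$ is any F\o lner sequence in $F$, the third equality is the mean ergodic theorem for the $F$-action together with continuity of the inner product, and the last equality uses that $P$ is a self-adjoint idempotent. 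Combining the two displays yields $\lim_{N\to\infty}\norm{\mathbb{E}_{\mathbf{g}\in\Phi_N} T_{p(\mathbf{g})} f}_{L^2(\mu)} = \nnorm{f}_1$, which is in fact an equality and hence stronger than the stated inequality. I do not expect a serious obstacle in this argument; the only points requiring care are the surjectivity of $L$ (which forces the two invariant subspaces to agree) and the bookkeeping identifying $\nnorm{f}_1$ with $\norm{Pf}_{L^2(\mu)}$.
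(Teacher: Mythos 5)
Your argument is correct and is essentially the paper's own proof: both reduce to the homogeneous part of $p$, apply the mean ergodic theorem (Theorem~\ref{meanergthm}) to the auxiliary $F^d$-action $\mathbf{g}\mapsto T_{L(\mathbf{g})}$, use the surjectivity of a nonzero linear map onto the field $F$ to identify the invariant subspace with the $(T_n)_{n\in F}$-invariant functions, and then identify $\norm{Pf}_{L^2(\mu)}$ with $\nnorm{f}_1$ via $\langle f,Pf\rangle$. The only cosmetic differences are that you factor out the unitary $T_c$ from the norm rather than replacing $f$ by $T_{a_0}f$, and you correctly note (as the paper's computation also shows) that the stated inequality is in fact an equality.
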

	\begin{proof} In coordinates, we write $p(\mathbf{g}) = p(g_1,\ldots, g_d) = a_1g_1 + \cdots + a_dg_d + a_0$ for $a_0, a_1, \ldots, a_d \in F$, not all zero. Since $\nnorm{T_{a_0}f}_1 = \nnorm{f}_1$, we may replace $f$ by $T_{a_0}f$; thus assume $a_0 = 0$. 
		Since $F$ is a countable field, $F^d$ is a countable amenable group under addition. Since $p$ is linear, we may define a group action of $(F^d,+)$ on $L^2(X)$ by $U_\mathbf{g}(f') := T_{p(\mathbf{g})}f'$ for each $f' \in L^2(X)$ and $\mathbf{g} \in F^d$. By the mean ergodic theorem (Theorem~\ref{meanergthm}),
		\begin{equation*}
			\lim_{N\to\infty} \mathbb{E}_{\mathbf{g}\in\Phi_N} T_{p(\mathbf{g})}f = P_1f
		\end{equation*}
		in norm, where $P_1$ projects onto $\{ f' : T_{a_1g_1 + \cdots + a_dg_d}f'=f' \text{ for all } g_1,\ldots,g_d \in F\}$, which we observe equals $\{ f' : T_g f' = f' \text{ for all } g \in F\}$ since $F$ is a field. Thus, we have by the mean ergodic theorem that, for any F\o lner sequence $(\Psi_N)$ in $F$,
		\begin{align*}
			\lim_{N\to\infty} \norm{\mathbb{E}_{\mathbf{g}\in\Phi_N} T_{p(\mathbf{g})}f }_{L^2(X)}^2 \ & = \ \norm{P_2f}_{L^2(X)}^2 \\
			& = \ \left\langle P_2f,P_2f \right\rangle \\
			& = \ \left\langle f,P_2f \right\rangle \\
			& = \  \lim_{N\to\infty} \mathbb{E}_{g\in\Psi_N} \int_X f \cdot T_g \bar{f} \ d\mu  \\
			& = \ \lim_{N \to \infty} \mathbb{E}_{g \in \Psi_N}\nnorm{\Delta_g f}_0 \ = \ \nnorm{f}_1^2,
		\end{align*}
		proving the first lemma.
	\end{proof}
	\begin{lemma}\label{linear seminorm identity} Let $F$ be a countable field with characteristic zero. Let $d \in \mathbb{N}$. Let $p : F^d \to F$ be a degree one polynomial with $p(\mathbf{0}) = 0$. For any $f \in L^\infty(X)$, for any integer $k \geq 0$, and any F\o lner sequence $(\Phi_N)$ in $F^d$, we have
		\begin{equation*}
			\lim_{N\to\infty} \mathbb{E}_{\mathbf{g}\in\Phi_N} \nnorm{f \cdot T_{p(\mathbf{g})} \bar{f}}_k^{2^k} = \nnorm{f}_{k+1}^{2^{k+1}}.
		\end{equation*}
	\end{lemma}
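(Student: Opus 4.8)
The plan is to recognize the quantity $\nnorm{\Delta_{p(\mathbf{g})} f}_k^{2^k}$ as a matrix coefficient of a unitary representation on the product space $L^2(\mu^{[k]})$, and then to invoke the mean ergodic theorem (Theorem~\ref{meanergthm}) twice: once for the $F^d$-action coming from $p$, and once for the genuine $F$-action. The field hypothesis will enter at exactly one point, through the surjectivity of $p$.

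First I would use the product-space description of the seminorms. Writing $\Delta_{p(\mathbf{g})} f = f \cdot T_{p(\mathbf{g})}\bar f$ and expanding $\mathcal{C}^{|\varepsilon|}(f \cdot T_{p(\mathbf{g})}\bar f) = \mathcal{C}^{|\varepsilon|}f \cdot T_{p(\mathbf{g})}\mathcal{C}^{|\varepsilon|+1}f$ coordinatewise on $X^{[k]} = X^{2^k}$, one obtains
\[
\nnorm{\Delta_{p(\mathbf{g})} f}_k^{2^k} = \int_{X^{[k]}} F_0 \cdot T_{p(\mathbf{g})}^{[k]} G_0 \ d\mu^{[k]},
\]
where $F_0 := \bigotimes_{\varepsilon \in \{0,1\}^k} \mathcal{C}^{|\varepsilon|} f$ and $G_0 := \bigotimes_{\varepsilon \in \{0,1\}^k}\mathcal{C}^{|\varepsilon|+1} f$ are fixed elements of $L^\infty(\mu^{[k]}) \subseteq L^2(\mu^{[k]})$, and $T_n^{[k]} = \prod_{\varepsilon} T_n$ is the diagonal action. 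This identity is the crux: the entire dependence on $\mathbf{g}$ is carried by the single diagonal operator $T_{p(\mathbf{g})}^{[k]}$.

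Since $p$ is linear with $p(\mathbf{0}) = 0$, the map $U_{\mathbf{g}} := T_{p(\mathbf{g})}^{[k]}$ defines a unitary action of $F^d$ on $L^2(\mu^{[k]})$. Averaging over $\Phi_N$ and pulling the average inside the integral (legitimate since $F^d$ is discrete and $\Phi_N$ finite), I would apply the mean ergodic theorem for $F^d$ to get $\mathbb{E}_{\mathbf{g}\in\Phi_N} U_{\mathbf{g}} G_0 \to P_{F^d} G_0$ in $L^2(\mu^{[k]})$, where $P_{F^d}$ projects onto the $U$-invariant vectors. The decisive step is identifying this invariant subspace: $U_{\mathbf{g}}h = h$ for all $\mathbf{g}$ holds iff $T_n^{[k]} h = h$ for all $n$ in the image $p(F^d)$, and because $p$ is a nonzero linear form over a \emph{field}, $p(F^d) = F$. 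Hence the $U$-invariant subspace coincides with the $T^{[k]}$-invariant subspace (the $\mathcal{I}_k$-measurable functions), so $P_{F^d}$ equals the projection $P$ onto that subspace. Integrating against the fixed $L^2$ function $F_0$ and using continuity of the inner product yields $\lim_{N} \mathbb{E}_{\mathbf{g}\in\Phi_N}\nnorm{\Delta_{p(\mathbf{g})} f}_k^{2^k} = \int_{X^{[k]}} F_0 \cdot P G_0 \ d\mu^{[k]}$.

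Finally, applying the identical reasoning to a Følner sequence $(\Psi_M)$ in $F$ and the genuine $F$-action $T^{[k]}$ gives $\lim_{M} \mathbb{E}_{n\in\Psi_M}\nnorm{\Delta_n f}_k^{2^k} = \int_{X^{[k]}} F_0 \cdot P G_0 \ d\mu^{[k]}$ with the \emph{same} projection $P$; by the inductive definition \ref{seminormdefinition} of the seminorms (whose value is independent of the Følner sequence), this left-hand limit is exactly $\nnorm{f}_{k+1}^{2^{k+1}}$, and the lemma follows. The only place requiring genuine care — and the sole place the field hypothesis is used — is the surjectivity of $p$, which forces the two invariant subspaces to agree; everything else reduces to the interchange of a finite average with an integral and the continuity of the inner product.
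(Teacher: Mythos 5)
Your proposal is correct and follows essentially the same route as the paper's proof: the same product-space expansion of $\nnorm{f\cdot T_{p(\mathbf{g})}\bar f}_k^{2^k}$ into a matrix coefficient of the diagonal action $T_{p(\mathbf{g})}^{[k]}$, the same two applications of the mean ergodic theorem, and the same key observation that surjectivity of the nonzero linear form $p$ over a field forces the $F^d$-invariant and $F$-invariant subspaces to coincide.
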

	\begin{proof} The main idea in this lemma is the same as in Lemma \ref{easy linear bound}, namely that two apparently different invariant subspaces turn out to be the same because $F$ is a field. Unfolding definitions, letting $(\Psi_N)$ be a F\o lner sequence in $F$, and agreeing to discuss the marked equality afterwards, we calculate
		\begin{align*}
			& \lim_{N\to\infty} \mathbb{E}_{\mathbf{g}\in\Phi_N} \nnorm{f \cdot T_{p(\mathbf{g})} \bar{f}}_k^{2^k} \\ & = \ \lim_{N\to\infty} \mathbb{E}_{\mathbf{g}\in\Phi_N} \int_{X^{[k]}} \bigotimes_{\varepsilon \in \{0,1\}^k} \mathcal{C}^{|\varepsilon|}(f\cdot T_{p(\mathbf{g})} \bar{f}) \ d\mu^{[k]} \\
			& = \ \lim_{N\to\infty} \mathbb{E}_{\mathbf{g}\in\Phi_N} \int_{X^{[k]}} \left( \bigotimes_{\varepsilon \in \{0,1\}^k} \mathcal{C}^{|\varepsilon|}f \right) \cdot \left( \bigotimes_{\varepsilon \in \{0,1\}^k} T_{p(\mathbf{g})}\mathcal{C}^{|\varepsilon|}\bar{f} \right) d\mu^{[k]} \\
			& = \ \lim_{N\to\infty} \mathbb{E}_{\mathbf{g}\in\Phi_N} \int_{X^{[k]}} \left( \bigotimes_{\varepsilon \in \{0,1\}^k} \mathcal{C}^{|\varepsilon|}f \right) \cdot T_{p(\mathbf{g})}^{[k]} \left( \bigotimes_{\varepsilon \in \{0,1\}^k} \mathcal{C}^{|\varepsilon|}\bar{f} \right) d\mu^{[k]} \\
			& \overset{*}{=} \ \lim_{N\to\infty} \mathbb{E}_{g\in\Psi_N} \int_{X^{[k]}}\left( \bigotimes_{\varepsilon \in \{0,1\}^k} \mathcal{C}^{|\varepsilon|}f \right) \cdot T_{g}^{[k]} \left( \bigotimes_{\varepsilon \in \{0,1\}^k} \mathcal{C}^{|\varepsilon|}\bar{f} \right) d\mu^{[k]} \\
			& = \ \lim_{N\to\infty} \mathbb{E}_{g\in\Psi_N} \nnorm{f \cdot T_g \bar{f}}_k^{2^k} \\
			& = \ \nnorm{f}_{k+1}^{2^{k+1}}.
		\end{align*}
		For the marked equality, consider the following. Since $p$ is linear, we may define a group action of $(F^d, +)$ on $L^2(X^{[k]})$ by $U_\mathbf{g}(f') := T_{p(\mathbf{g})}^{[k]}f'$ for each $f' \in L^2(X^{[k]})$ and $\mathbf{g} \in F^d$. By the mean ergodic theorem (Theorem~\ref{meanergthm}),
		\begin{multline}\label{one}
			\lim_{N\to\infty} \mathbb{E}_{\mathbf{g}\in\Phi_N} \int_{X^{[k]}} \left( \bigotimes_{\varepsilon \in \{0,1\}^k}\mathcal{C}^{|\varepsilon|} f \right) \cdot T_{p(\mathbf{g})}^{[k]} \left( \bigotimes_{\varepsilon \in \{0,1\}^k} \mathcal{C}^{|\varepsilon|}\bar{f} \right) d\mu^{[k]} = \\ \int_{X^{[k]}} \left( \bigotimes_{\varepsilon \in \{0,1\}^k} \mathcal{C}^{|\varepsilon|}f \right) \cdot P_1 \left( \bigotimes_{\varepsilon \in \{0,1\}^k} \mathcal{C}^{|\varepsilon|}\bar{f} \right) d\mu^{[k]},
		\end{multline}
		where $P_1$ is the orthogonal projection onto the subspace $\{ f' : T_{p(\mathbf{g})}^{[k]} f' = f' \text{ for all } \mathbf{g} \in F^d \}$, which we observe equals $\{ f' : T_g^{[k]} f' = f' \text{ for all } g \in F\}$. Thus, since $U_g(f') := T_g^{[k]}f'$ for each $f' \in L^2(X^{[k]})$ and $g \in F$ defines a group action of $(F,+)$ on $L^2(X^{[k]})$, we see that
		\begin{multline}\label{two}
			\lim_{N\to\infty} \mathbb{E}_{g\in\Psi_N} \int_{X^{[k]}}\left( \bigotimes_{\varepsilon \in \{0,1\}^k} \mathcal{C}^{|\varepsilon|}f \right) \cdot T_{g}^{[k]} \left( \bigotimes_{\varepsilon \in \{0,1\}^k} \mathcal{C}^{|\varepsilon|}\bar{f} \right) d\mu^{[k]} = \\ \int_{X^{[k]}}\left( \bigotimes_{\varepsilon \in \{0,1\}^k} \mathcal{C}^{|\varepsilon|}f \right) \cdot P_2 \left( \bigotimes_{\varepsilon \in \{0,1\}^k} \mathcal{C}^{|\varepsilon|}\bar{f} \right) d\mu^{[k]},
		\end{multline}
		where $P_2$ is the orthogonal projection onto the subspace $\{ f' : T_g^{[k]} f' = f' \text{ for all } g \in F\}$. Since $P_1 = P_2$, the marked equality follows from \ref{one} and \ref{two}, proving the second lemma.
	\end{proof}
	\begin{proof}[Proof of Proposition \ref{linear inequality for char thm}] We induct on $r$. The case $r = 1$ follows from Lemma \ref{easy linear bound}. Let $r \geq 2$ and assume that for any $p_1, \ldots, p_{r-1} : F^d \to F$ essentially distinct polynomials of degree one there exists $C$ such that for any $f_1, \ldots, f_{r-1} \in L^\infty(X)$ we have
		\begin{equation*}
			\limsup_{N \to \infty} \norm{\mathbb{E}_{\mathbf{g}\in\Phi_N} T_{p_1(\mathbf{g})}f_1\cdots T_{p_{r-1}(\mathbf{g})}f_{r-1}}_{L^2(X)} \ \leq \ C\nnorm{f_1}_{r-1} \prod_{i=2}^{r-1} \norm{f_i}_{L^\infty(X)}
		\end{equation*}
		for any F\o lner sequence $(\Phi_N)$ in $F^d$.
		
		Let $p_1, \ldots, p_r : F^d \to F$ be essentially distinct polynomials of degree one, and let $(\Phi_N)$ be a F\o lner sequence in $F^d$. We would want to show there exists $C$ such that for any $f_1, \ldots, f_{r} \in L^\infty(X)$ we have
		\begin{equation*}
			\limsup_{N \to \infty} \norm{\mathbb{E}_{\mathbf{g}\in\Phi_N} T_{p_1(\mathbf{g})}f_1\cdots T_{p_{r}(\mathbf{g})}f_{r}}_{L^2(X)} \ \leq \ C\nnorm{f_1}_{r} \prod_{i=2}^{r} \norm{f_i}_{L^\infty(X)}.
		\end{equation*}
		It suffices to consider only the case when $p_i(\mathbf{0}) = 0$ and $||f_i||_{L^\infty(X)} \leq 1$ for all $i \in \{1, \ldots, r\}$. However, note that under the latter assumption, it actually suffices to show there exists $C$ such that for any $f_1, \ldots, f_{r} \in L^\infty(X)$ we have
		\begin{equation*}
			\limsup_{N \to \infty} \norm{\mathbb{E}_{\mathbf{g}\in\Phi_N} T_{p_1(\mathbf{g})}f_1\cdots T_{p_{r}(\mathbf{g})}f_{r}}_{L^2(X)} \ \leq \ C\nnorm{f_1}_{r},
		\end{equation*}
		so let us show this.
		
		Applying part (1) of Lemma \ref{2D trick} with $x_\mathbf{g} = T_{p_1(\mathbf{g})}f_1\cdots T_{p_r(\mathbf{g})}f_r \in L^2(X)$ for $\mathbf{g} \in F^d$, exploiting the $F$-invariance of $\mu$ and the linearity of our $p_i$'s, and using Cauchy--Schwarz, we see that for any finite subset $S \subset F^d$,
		\begin{align*}
			& \limsup_{N \to \infty} \norm{\mathbb{E}_{\mathbf{g}\in\Phi_N} T_{p_1(\mathbf{g})}f_1\cdots T_{p_{r}(\mathbf{g})}f_{r}}_{L^2(X)}^2 \\
			& \leq \ \limsup_{N \to \infty} \mathbb{E}_{(\mathbf{h},\mathbf{h'})\in S^2} \mathbb{E}_{\mathbf{g}\in\Phi_N} \int_X \prod_{i=1}^r T_{p_i(\mathbf{g} + \mathbf{h})}f_i\prod_{i=1}^r T_{p_i(\mathbf{g} + \mathbf{h'})}\bar{f}_i \ d\mu \\
			& = \ \limsup_{N \to \infty} \mathbb{E}_{(\mathbf{h},\mathbf{h'})\in S^2} \mathbb{E}_{\mathbf{g}\in\Phi_N} \\ & \ \int_X \left( \prod_{i=1}^{r-1} T_{(p_i-p_r)(\mathbf{g})}\left( T_{p_i(\mathbf{h})}f_i\cdot T_{p_i(\mathbf{h'})}\bar{f}_i\right) \right) \left( T_{p_r(\mathbf{h})}f_r\cdot T_{p_r(\mathbf{h'})}\bar{f}_r \right) d\mu \\
			& \leq \ \mathbb{E}_{(\mathbf{h},\mathbf{h'})\in S^2} \limsup_{N \to \infty} \norm{\mathbb{E}_{\mathbf{g}\in\Phi_N} \prod_{i=1}^{r-1} T_{(p_i-p_r)(\mathbf{g})}(T_{p_i(\mathbf{h})} f_i \cdot T_{p_i(\mathbf{h'})}\bar{f}_i)}_{L^2(X)}.
		\end{align*}
		By the inductive hypothesis, there exists $C'$ (independent of the $f_i$'s and of $(\Phi_N)$) such that for any $\mathbf{h}, \mathbf{h'} \in F^d$, we have
		\begin{align*}
			& \limsup_{N\to\infty} \norm{\mathbb{E}_{\mathbf{g}\in\Phi_N} \prod_{i=1}^{r-1} T_{(p_i-p_r)(\mathbf{g})}(T_{p_i(\mathbf{h})} f_i \cdot T_{p_i(\mathbf{h'})}\bar{f}_i)}_{L^2(X)} \\
			& \leq \ C' \nnorm{T_{p_1(\mathbf{h})} f_1 \cdot T_{p_1(\mathbf{h'})}\bar{f}_1}_{r-1} \prod_{i=2}^{r-1} \norm{T_{p_i(\mathbf{h})} f_i \cdot T_{p_i(\mathbf{h'})}\bar{f}_i}_{L^\infty(X)} \\
			& \leq \ C' \nnorm{T_{p_1(\mathbf{h})} f_1 \cdot T_{p_1(\mathbf{h'})}\bar{f}_1}_{r-1}.
		\end{align*}
		Thus, after combining the above displays and applying $F$-invariance of $\nnorm{\cdot}_{r-1}$ and Jensen's inequality, we get for any finite subset $S \subset F^d$ that
		\begin{align*}
			& \limsup_{N \to \infty} \norm{\mathbb{E}_{\mathbf{g}\in\Phi_N} T_{p_1(\mathbf{g})}f_1\cdots T_{p_{r}(\mathbf{g})}f_{r}}_{L^2(X)} \\
			& \leq \ \left( C' \ \mathbb{E}_{(\mathbf{h},\mathbf{h'})\in S^2} \nnorm{T_{p_1(\mathbf{h})} f_1 \cdot T_{p_1(\mathbf{h'})}\bar{f}_1}_{r-1} \right)^{1/2} \\
			& = \ (C')^{1/2} \left( \mathbb{E}_{(\mathbf{h},\mathbf{h'})\in S^2} \nnorm{f_1 \cdot T_{p_1(\mathbf{h'}-\mathbf{h})}\bar{f}_1}_{r-1}\right)^{1/2} \\
			& \leq \ (C')^{1/2} \left( \mathbb{E}_{(\mathbf{h},\mathbf{h'})\in S^2} \nnorm{f_1 \cdot T_{p_1(\mathbf{h'}-\mathbf{h})}\bar{f}_1}_{r-1}^{2^{r-1}} \right)^{2^{-r}}.
		\end{align*}
		To finish, let $(\Psi_N)$ be a F\o lner sequence in $F^d$. Then $(\Psi_N \times \Psi_N)$ is a F\o lner sequence in $F^{2d}$ and $(\mathbf{h},\mathbf{h'}) \mapsto p(\mathbf{h},\mathbf{h'}) := p_1(\mathbf{h'}-\mathbf{h})$ is a degree one polynomial from $F^{2d}$ to $F$ with $p(\mathbf{0}) = 0$, so letting $S = \Psi_N$ successively for each $N$ and applying Lemma \ref{linear seminorm identity}, we conclude
		\begin{equation*}
			\limsup_{N \to \infty} \norm{\mathbb{E}_{\mathbf{g}\in\Phi_N} T_{p_1(\mathbf{g})}f_1\cdots T_{p_{r}(\mathbf{g})}f_{r}}_{L^2(X)} \ \leq \ (C')^{1/2} \nnorm{f_1}_{r},
		\end{equation*}
		completing the induction.
	\end{proof}
	
	We now prepare to execute the PET induction, a technique introduced in \cite{wmpet}. Here we follow the presentation of Leibman in \cite{leibmanconvergence}.
	
	Fix $d \in \mathbb{N}$. A \textit{system} $P$ is a finite collection of polynomials on $F^d$, whose \textit{degree} $\deg P$ is defined to be the maximum of the total degrees of these polynomials. A system $P = \{p_1, \ldots, p_r\}$ is \textit{standard} if all $p_i$ are nonconstant and essentially distinct, i.e. $p_i - p_j$ is not constant for $i \neq j$, and also $\deg p_1 = \deg P$. If $p$ and $q$ are polynomials on $F^d$, define $p$ to be equivalent to $q$ iff $\deg p = \deg q$ and $\deg (p-q) < \deg p$. This defines an equivalence relation on the set of all such polynomials, which thus partitions a given system $P$ into equivalence classes. The degree of an equivalence class is the common degree of its polynomials. For a system $P$, define its \textit{weight} $\omega (P)$ as the vector $(\omega_1, \ldots, \omega_{\deg P})$, where $\omega_i$ is the number of equivalence classes of degree $i$ in $P$.\footnote{We will not need to count the number of equivalence classes of degree 0.} For two given integer vectors $\omega = (\omega_1, \ldots, \omega_m)$ and $\omega ' = (\omega_1 ', \ldots, \omega_{m'} ')$, define $\omega < \omega '$ iff either condition
	\begin{align*}
		(1) & \quad m < m' \\
		(2) & \quad m = m' \text{ and there is an } n \leq m \text{ such that } \omega_n < \omega_n ' \text{ and } \omega_i = \omega_i ' \text{ for } n < i \leq m
	\end{align*}
	is met. This defines a well ordering on the set of weights of systems. We plan to induct on this well ordering of weights. One final agreement before we start: We say a property holds \textit{for almost all} $\mathbf{g} \in F^d$ if the set of elements of $F^d$ for which it does not hold is contained in the set of zeroes of a nontrivial polynomial on $F^d$. The reason for this is that the set of zeros of a nontrivial polynomial $p$, say $E$, has zero upper Banach density. Indeed, by equation (5) right after \cite[Remark~1.1]{bbf} to show that $d^*(E)=0$ it is enough to show that for any tuple of F\o lner sequences $(\Phi_{1,N},\dots,\Phi_{d,N})$ we have $\bar{d}_{(\Phi_{1,N} \times \dots \times \Phi_{d,N})}(E)=0$. This is straightforward: for fixed $(u_1,\dots,u_{d-1}) \in \Phi_{1,N} \times \dots \times \Phi_{d-1, N}$, the set of $v \in F$ such that $p(u_1,\dots,u_{d-1},v)=0$ has cardinality bounded above by the degree of the polynomial in $v$ (which is in turn bounded above by some constant $C$ independent of the choice of $u_1,\dots,u_{d-1}$). Thus, $\frac{|E \cap (\Phi_{1,N} \times \dots \times \Phi_{d,N})|}{|\Phi_{1,N}\times\dots\times \Phi_{d,N}|} \leq \frac{C}{|\Phi_{d,N}|},$ which goes to $0$ as $N \to \infty$.
	
	We now show the following.
	\begin{proposition} \label{BTZ factor is characteristic warmup} Let $F$ be a countable field with characteristic zero. Let $d \in \mathbb{N}$. For any $r \in \mathbb{N}$ and any integer vector $\omega = (\omega_1, \ldots, \omega_\ell)$, there is a $k \in \mathbb{N}$ such that for any standard system $P = \{p_1, \ldots, p_r : F^d \to F\}$ of weight $\omega$ and any $f_1, \ldots, f_r \in L^\infty(X)$ with $\nnorm{f_1}_k = 0$, we have
		\[ \lim_{N\to\infty} \mathbb{E}_{\mathbf{g}\in\Phi_N} T_{p_1(\mathbf{g} )} f_1 \cdots T_{p_r(\mathbf{g} )} f_r = 0 \]
		in $L^2(X)$ for any F\o lner sequence $(\Phi_N)$ in $F^d$.
	\end{proposition}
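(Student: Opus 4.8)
The plan is to prove the statement by induction on the weight $\omega$ with respect to the well-ordering introduced above: the degree-one systems form the base case (handled by Corollary~\ref{linear case of char thm}), and a single application of the van der Corput trick followed by Leibman's PET weight-reduction furnishes the inductive step, all the while keeping the distinguished function $f_1$ attached to a top-degree polynomial. For the base case, suppose $\deg P = 1$, so that $\omega$ has length one and $P = \{p_1,\dots,p_r\}$ consists of essentially distinct degree-one polynomials with $f_1$ attached to $p_1$. Then Corollary~\ref{linear case of char thm} applies directly: choosing $k = r+1$, the hypothesis $\nnorm{f_1}_{k} = 0$ forces the average to converge to $0$ in $L^2(X)$. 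This disposes of every weight of length one.

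For the inductive step, assume $\deg P \geq 2$ and that the statement holds for all weights strictly smaller than $\omega$. Writing $x_{\mathbf{g}} := T_{p_1(\mathbf{g})}f_1 \cdots T_{p_r(\mathbf{g})}f_r$, I would apply part (2) of Lemma~\ref{2D trick} to obtain a F\o lner sequence $(\Theta_M)$ in $F^{3d}$ with
\[ \limsup_{N \to \infty} \norm{\mathbb{E}_{\mathbf{g}\in\Phi_N} x_{\mathbf{g}}}_{L^2(X)}^2 \leq \limsup_{M \to \infty} \mathbb{E}_{(\mathbf{g},\mathbf{h},\mathbf{h}')\in\Theta_M} \langle x_{\mathbf{g}+\mathbf{h}}, x_{\mathbf{g}+\mathbf{h}'}\rangle; \]
this passage to $F^{3d}$ is precisely the dimension-increment trick mentioned in the remark, and is why the result is proved for all $d$ simultaneously. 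Expanding the inner product as $\int_X \prod_i T_{p_i(\mathbf{g}+\mathbf{h})}f_i \cdot \prod_i T_{p_i(\mathbf{g}+\mathbf{h}')}\bar{f}_i\,d\mu$ and composing with $T_{-p_w(\mathbf{g}+\mathbf{h}')}$ (using the $F$-invariance of $\mu$), where $p_w \in P$ has minimal degree, rewrites the right-hand side as the integral of an average over $\Theta_M$ of a product $\prod_j T_{q_j(\mathbf{g},\mathbf{h},\mathbf{h}')} f_j'$ associated with the derived system $P' = \{q_j\}$ on $F^{3d}$, whose functions $f_j'$ belong to $\{f_i, \bar{f}_i\}$. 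Since $|\int_X \cdot\,d\mu| \leq \norm{\cdot}_{L^2(X)}$, it suffices to prove that $\mathbb{E}_{(\mathbf{g},\mathbf{h},\mathbf{h}')\in\Theta_M} \prod_j T_{q_j} f_j' \to 0$ in $L^2(X)$.

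By the weight-reduction computation at the heart of PET induction (\cite{wmpet}, in the form given by Leibman~\cite{leibmanconvergence}), transported from $\mathbb{Z}^d$ to $F^d$, the derived system $P'$ is standard of weight $\omega' < \omega$ once one discards the identically constant $q_j$ and groups coincident ones, invoking the convention that a property failing only on the zero set of a nontrivial polynomial holds for almost all $(\mathbf{g},\mathbf{h},\mathbf{h}')$, a set of density zero in $F^{3d}$. The key point is that $f_1$ descends unchanged: it remains attached to $q_1 := p_1(\mathbf{g}+\mathbf{h}) - p_w(\mathbf{g}+\mathbf{h}')$, a top-degree element of $P'$ that is distinct from every other derived polynomial, so it is never merged with another function and may be taken as the first function of $P'$. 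The induction hypothesis then yields an integer $k'$, depending only on $\omega'$ and on the cardinality of $P'$, such that $\nnorm{f_1}_{k'} = 0$ forces the derived average to vanish. Choosing $k \geq k'$ and using the monotonicity $\nnorm{f_1}_{k'} \leq \nnorm{f_1}_{k}$ recorded earlier, the hypothesis $\nnorm{f_1}_{k} = 0$ gives $\nnorm{f_1}_{k'} = 0$, hence the vanishing of the derived average and, through the chain above, of $\limsup_N \norm{\mathbb{E}_{\mathbf{g}\in\Phi_N} x_{\mathbf{g}}}_{L^2(X)}$. This closes the induction.

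I expect the main obstacle to be the weight-reduction bookkeeping: one must verify that subtracting a minimal-degree pivot strictly decreases $\omega$ in the prescribed well-ordering while simultaneously pinning $f_1$ to a top-degree, unmerged polynomial of $P'$, so that the seminorm hypothesis on $f_1$ transfers intact to the inductive hypothesis. A secondary delicate point is controlling the genericity: one must ensure that the density-zero set of degenerate shift parameters, on which $P'$ fails to be a genuine standard system of weight $\omega'$, does not contribute to the limit.
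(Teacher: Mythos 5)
There is a genuine gap in the inductive step. You invoke part (2) of Lemma~\ref{2D trick}, pass to a F\o lner sequence $(\Theta_M)$ in $F^{3d}$, and then claim that the derived system $P'$, \emph{viewed as a system of polynomials on $F^{3d}$ in the variables $(\mathbf{g},\mathbf{h},\mathbf{h}')$}, is standard of weight $\omega'<\omega$. That is false in general, because the cancellation that drives PET happens only in the variable $\mathbf{g}$: for equivalent $p_i,p_j$ the difference $p_i(\mathbf{g}+\mathbf{h})-p_j(\mathbf{g}+\mathbf{h}')$ drops degree as a polynomial in $\mathbf{g}$ (for generic $\mathbf{h},\mathbf{h}'$), but typically retains full total degree as a polynomial in $(\mathbf{g},\mathbf{h},\mathbf{h}')$. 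Concretely, take $d=1$ and $P=\{g^2,\,2g^2\}$, of weight $(0,2)$; the derived polynomial $2(g+h)^2-2(g+h')^2=4g(h-h')+2h^2-2h'^2$ has degree $1$ in $g$ but total degree $2$ in $(g,h,h')$, and the three derived polynomials fall into three pairwise inequivalent classes of total degree $2$ on $F^3$, giving weight $(0,3)\not<(0,2)$. So the induction does not close. The paper instead uses part (1) of Lemma~\ref{2D trick} with a finite averaging set $S\subset F^d$, keeps $\mathbf{h},\mathbf{h}'$ as \emph{parameters}, applies Cauchy--Schwarz for each fixed $(\mathbf{h},\mathbf{h}')\in S^2$, and applies the induction hypothesis to $P'_{\mathbf{h},\mathbf{h}'}$ as a system on $F^d$, which is standard of weight $<\omega$ for almost all $(\mathbf{h},\mathbf{h}')$; the degenerate pairs contribute nothing in the limit because they form a density-zero set and $S$ is taken to run through the terms of a F\o lner sequence. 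The dimension increment to $F^{3d}$ via part (2) of the lemma occurs only once, in deducing Theorem~\ref{BTZ factor is characteristic} from this Proposition (reducing nonstandard systems to standard ones); it is not part of the weight induction, and importing it into the induction is precisely what breaks the argument.

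Two secondary points. First, for $\deg p_i=1$ the two shifts $p_i(\mathbf{g}+\mathbf{h})$ and $p_i(\mathbf{g}+\mathbf{h}')$ differ by a constant in $\mathbf{g}$, hence are not essentially distinct; the paper keeps only one of them and attaches to it the function $f_i\cdot T_{p_i(\mathbf{h})-p_i(\mathbf{h}')}\bar f_i$, so your assertion that the derived functions lie in $\{f_i,\bar f_i\}$ is not quite right and, without this merging, the derived system would fail to be a standard system at all. Second, the pivot must be chosen from $\{p_2,\ldots,p_r\}$ of minimal degree with the tie-breaking rule that, when all degrees coincide, one prefers a polynomial not equivalent to $p_1$; only with this choice is $q_1=p_1(\mathbf{g}+\mathbf{h})-p_{i_0}(\mathbf{g}+\mathbf{h}')$ guaranteed to realize the top degree of the derived system (exactly, or after a generic drop by one when no inequivalent pivot exists), which is what keeps $f_1$ pinned to the first, top-degree position so that the seminorm hypothesis transfers. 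You correctly identify this bookkeeping as delicate, but the decisive obstacle is the first one: the weight must be computed over $F^d$ with $\mathbf{h},\mathbf{h}'$ frozen, not over $F^{3d}$.
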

	\begin{proof} We proceed by induction on the weight of the standard system. The base case handles all weights corresponding to systems of degree one and follows by Corollary \ref{linear case of char thm}. Let $P = \{p_1, \ldots, p_r\}$ be a standard system with $\deg P \geq 2$ and weight $\omega$. By strong induction, there exists\footnote{There are finitely many possible weights $\omega '$ of systems of $s \leq 2r$ polynomials such that $\omega ' < \omega$, and the seminorms $\nnorm{\cdot}_k$ form a nondecreasing sequence.} $k \in \mathbb{N}$ such that for any standard system $\{ q_1, \ldots, q_s\}$ with $s \leq 2r$ with weight $\omega' < \omega$ and any $\tilde{f}_1, \ldots, \tilde{f}_s \in L^\infty(X)$ with $\nnorm{\tilde{f}_1}_k = 0$, we have
		\[ \lim_{N\to\infty} \mathbb{E}_{\mathbf{g}\in\Phi_N} T_{q_1(\mathbf{g})} \tilde{f}_1 \cdots T_{q_s(\mathbf{g})} \tilde{f}_s = 0 \]
		in $L^2(X)$ for any F\o lner sequence $(\Phi_N)$ in $F^d$.
		
		Choose $i_0 \in \{2, \ldots, r\}$ such that $p_{i_0}$ has the minimal degree in our given system $P$. In case all polynomials in $P$ have the same degree, if possible, choose $i_0$ to be such that $p_{i_0}$ is not equivalent to $p_1$. For each $\mathbf{h}, \mathbf{h'} \in F^d$, define the system
		\[ P_{\mathbf{h}, \mathbf{h'}} := \{ p_i(\mathbf{g} + \mathbf{h}), p_i(\mathbf{g} + \mathbf{h'}) : \deg p_i > 1 \} \cup \{ p_i(\mathbf{g} + \mathbf{h'}) : \deg p_i = 1 \},\]
		where $p_i(\mathbf{g} + \mathbf{h})$ and $p_i(\mathbf{g} + \mathbf{h'})$ are viewed as polynomials in $\mathbf{g}$. Order $P_{\mathbf{h}, \mathbf{h'}} = \{ q_{\mathbf{h}, \mathbf{h'}, 1} , \ldots, q_{\mathbf{h}, \mathbf{h'}, s}\}$ in some way so that $q_{\mathbf{h}, \mathbf{h'}, 1}(\mathbf{g}) = p_1(\mathbf{g} + \mathbf{h})$ and $q_{\mathbf{h}, \mathbf{h'}, s}(\mathbf{g}) = p_{i_0}(\mathbf{g} + \mathbf{h'})$. Then $P_{\mathbf{h}, \mathbf{h'}}$ is a standard system for almost all $(\mathbf{h}, \mathbf{h'}) \in F^{2d}$. Moreover, we have not changed the equivalence classes $P$ had: For any $\mathbf{h}, \mathbf{h'} \in F^d$ and $i \in \{1, \ldots, r\}$, the polynomials $p_i(\mathbf{g} + \mathbf{h})$ and $p_i(\mathbf{g} + \mathbf{h'})$ are equivalent to $p_i(\mathbf{g})$, so it follows that $\omega(P_{\mathbf{h}, \mathbf{h'}}) = \omega(P) = \omega$ always.
		
		Next, for any $\mathbf{h}, \mathbf{h'}$, define the system
		\[ P_{\mathbf{h}, \mathbf{h'}}' := \{ q_{\mathbf{h}, \mathbf{h'}, 1} - q_{\mathbf{h}, \mathbf{h'}, s}, \ldots, q_{\mathbf{h}, \mathbf{h'}, s-1} - q_{\mathbf{h}, \mathbf{h'}, s} \}. \]
		We will apply the induction hypothesis to these, so we check the following. For almost all $(\mathbf{h}, \mathbf{h'}) \in F^{2d}$, $P_{\mathbf{h}, \mathbf{h'}}$ is standard. This follows first since the nonconstant and essentially distinct nature of the $q_{\mathbf{h}, \mathbf{h'}, j}, j \in \{1, \ldots, s-1\}$ implies the same for $q_{\mathbf{h}, \mathbf{h'}, j} - q_{\mathbf{h}, \mathbf{h'}, s}, j \in \{1, \ldots, s-1\}$. Second, if $p_{i_0}$ is not equivalent to $p_1$, then $p_{i_0}$ has been chosen either to have smaller degree than $p_1$ or to have the same degree but not reduce the degree of $p_1$ on subtracting, so that $\deg (q_{\mathbf{h}, \mathbf{h'}, 1} - q_{\mathbf{h}, \mathbf{h'}, s}) = \deg p_1 = \deg P_{\mathbf{h}, \mathbf{h'}}$ for all $\mathbf{h}, \mathbf{h'}$; and if $p_{i_0}$ is equivalent to $p_1$, then $\deg (q_{\mathbf{h}, \mathbf{h'}, 1} - q_{\mathbf{h}, \mathbf{h'}, s}) = \deg p_1 - 1= \deg P_{\mathbf{h}, \mathbf{h'}}$ for almost all $\mathbf{h}, \mathbf{h'}$. Finally, we have reduced the weight: For all $(\mathbf{h}, \mathbf{h'}) \in F^{2d}$, we have $\omega (P_{\mathbf{h}, \mathbf{h'}}') < \omega$. This follows since the equivalence classes in $P_{\mathbf{h}, \mathbf{h'}}'$ and their degrees are the same as those in $P_{\mathbf{h}, \mathbf{h'}}$, with one exception: The class in $P_{\mathbf{h}, \mathbf{h'}}$ to which $q_{\mathbf{h}, \mathbf{h'}, s}$ belongs has either vanished or been split into some new classes of lesser degree.
		
		Now we can combine everything. Let $f_1, \ldots, f_r \in L^\infty(X)$ with $\nnorm{f_1}_k = 0$ and let $(\Phi_N)$ be a F\o lner sequence in $F^d$. We may assume that $||f_2||_{L^\infty(X)}, \ldots, ||f_r||_{L^\infty(X)} \leq 1$. By part (1) of Lemma \ref{2D trick} applied to $x_\mathbf{g} = \prod_{i=1}^r T_{p_i(\mathbf{g})} f_i \in L^2(X)$ for $\mathbf{g} \in F^d$, for any finite set $S \subset F^d$, we have
		\begin{align*}
			& \limsup_{N\to\infty} \norm{\mathbb{E}_{\mathbf{g}\in\Phi_N} \prod_{i=1}^r T_{p_i(\mathbf{g})} f_i}_{L^2(X)}^2 \\
			& \leq \ \limsup_{N\to\infty} \mathbb{E}_{(\mathbf{h},\mathbf{h'})\in S^2} \mathbb{E}_{\mathbf{g}\in\Phi_N} \int_X \prod_{i=1}^r T_{p_i(\mathbf{g}+\mathbf{h})} f_i \cdot \prod_{i=1}^r T_{p_i(\mathbf{g}+\mathbf{h'})} \bar{f}_i \ d\mu \\
			& = \ \limsup_{N\to\infty} \mathbb{E}_{(\mathbf{h},\mathbf{h'})\in S^2} \mathbb{E}_{\mathbf{g}\in\Phi_N} \\ & \ \int_X \prod_{\deg p_i > 1} \left( T_{p_i(\mathbf{g}+\mathbf{h})} f_i \cdot T_{p_i(\mathbf{g}+\mathbf{h'})} \bar{f}_i \right) \prod_{\deg p_i = 1} T_{p_i(\mathbf{g}+\mathbf{h'})} (f_i \cdot T_{p_i(\mathbf{h}) - p_i(\mathbf{h'})} \bar{f}_i)\ d\mu \\
			& = \ \limsup_{N\to\infty} \mathbb{E}_{(\mathbf{h},\mathbf{h'})\in S^2} \mathbb{E}_{\mathbf{g}\in\Phi_N} \int_X \prod_{j=1}^s T_{q_{\mathbf{h},\mathbf{h'},j}(\mathbf{g})} \tilde{f}_{\mathbf{h}, \mathbf{h'}, j} \ d\mu \\
			& = \ \limsup_{N\to\infty} \mathbb{E}_{(\mathbf{h},\mathbf{h'})\in S^2} \mathbb{E}_{\mathbf{g}\in\Phi_N} \int_X \left( \prod_{j=1}^{s-1} T_{(q_{\mathbf{h},\mathbf{h'},j} - q_{\mathbf{h}, \mathbf{h'}, s})(\mathbf{g})} \tilde{f}_{\mathbf{h}, \mathbf{h'}, j} \right) \cdot \tilde{f}_{\mathbf{h},\mathbf{h'}, s} \ d\mu \\
			& \leq \ \mathbb{E}_{(\mathbf{h},\mathbf{h'})\in S^2} \limsup_{N\to\infty} \norm{ \mathbb{E}_{\mathbf{g}\in\Phi_N} \prod_{j=1}^{s-1} T_{(q_{\mathbf{h},\mathbf{h'},j} - q_{\mathbf{h}, \mathbf{h'}, s})(\mathbf{g})} \tilde{f}_{\mathbf{h}, \mathbf{h'}, j}}_{L^2(X)},
		\end{align*}
		where the first equality is a rearrangement, the second equality follows on setting (for $\mathbf{h}, \mathbf{h'} \in F^d$) $q_{\mathbf{h}, \mathbf{h'}, 1}, \ldots, q_{\mathbf{h}, \mathbf{h'}, s}$ to be the polynomials of the system $P_{\mathbf{h}, \mathbf{h'}}$ and setting $\tilde{f}_{\mathbf{h}, \mathbf{h'}, j}$ to be either $f_i$ or $f_i \cdot T_{p_i(\mathbf{h})-p_i(\mathbf{h'})} \bar{f}_i$ for appropriate\footnote{To clarify, set the function $\tilde{f}_{\mathbf{h}, \mathbf{h'}, j}$ to be $f_i$ when $q_{\mathbf{h}, \mathbf{h'}, j}$ is of the form $p_i(\mathbf{g} + \mathbf{h})$ or $p_i(\mathbf{g} + \mathbf{h'})$ with $\deg p_i > 1$, otherwise set it to be $f_i \cdot T_{p_i(\mathbf{h})-p_i(\mathbf{h'})} \bar{f}_i$.} $i$, the third equality follows from invariance, and the last inequality is Cauchy--Schwarz. Note that since $\deg p_1 = \deg P \geq 2$, we have $\tilde{f}_{\mathbf{h}, \mathbf{h'}, 1} = f_1$. Thus, by the induction hypothesis applied to $P_{\mathbf{h}, \mathbf{h'}}'$, we have
		\[ \lim_{N\to\infty} \norm{\mathbb{E}_{\mathbf{g}\in\Phi_N} \prod_{j=1}^{s-1} T_{(q_{\mathbf{h},\mathbf{h'},j} - q_{\mathbf{h}, \mathbf{h'}, s})(\mathbf{g})} \tilde{f}_{\mathbf{h}, \mathbf{h'}, j} }_{L^2(X)} = 0 \]
		for almost all $(\mathbf{h}, \mathbf{h'}) \in F^{2d}$. For the remaining $(\mathbf{h}, \mathbf{h'}) \in F^{2d}$, the norm in the previous display is bounded by 1, which implies that
		\[ \inf_{S \subset F^d \text{ finite}} \mathbb{E}_{(\mathbf{h},\mathbf{h'})\in S^2} \limsup_{N\to\infty} \norm{ \mathbb{E}_{\mathbf{g}\in\Phi_N} \prod_{j=1}^{s-1} T_{(q_{\mathbf{h},\mathbf{h'},j} - q_{\mathbf{h}, \mathbf{h'}, s})(\mathbf{g})} \tilde{f}_{\mathbf{h}, \mathbf{h'}, j}}_{L^2(X)} = 0, \]
		by taking $F$ to be the successive terms of a F\o lner sequence. Combined with the previous displayed inequality, this completes the proof\footnote{It follows from the proof that the step of the seminorm $k$ depends only on the degree of the polynomials $p_1,\dots,p_r$ and $d$.}.
	\end{proof}
	
	\begin{proof}[Proof of Theorem \ref{BTZ factor is characteristic}] For standard systems, Proposition \ref{BTZ factor is characteristic warmup} does it. We reduce the nonstandard case to this one. Let $P = \{p_1, \ldots, p_r\}$ be a nonstandard system of nonconstant essentially distinct polynomials $F^d \to F$ of degree at most $b$, let $f_1, \ldots, f_r \in L^\infty(X)$ and let $(\Phi_N)$ be a F\o lner sequence in $F^d$. By part (2) of Lemma \ref{2D trick} applied to $x_\mathbf{g} = \prod_{i=1}^r T_{p_i(\mathbf{g})} f_i \in L^2(X)$ for $\mathbf{g} \in F^d$, there exists a F\o lner sequence $(\Theta_M)$ in $F^{3d}$ such that
		\begin{align*}
			& \limsup_{N \to \infty} \norm{\mathbb{E}_{\mathbf{g}\in\Phi_N} \prod_{i=1}^r T_{p_i(\mathbf{g})} f_i}^2_{L^2(X)} \\
			& \leq \ \limsup_{M \to \infty} \mathbb{E}_{(\mathbf{g},\mathbf{h},\mathbf{h'}) \in\Theta_M} \int \prod_{i=1}^r T_{p_i(\mathbf{g}+\mathbf{h})} f_i \cdot \prod_{i=1}^r T_{p_i(\mathbf{g}+\mathbf{h'})} \bar{f}_i \ d\mu \\
			& = \ \limsup_{M \to \infty} \mathbb{E}_{(\mathbf{g},\mathbf{h},\mathbf{h'}) \in\Theta_M} \int \prod_{i=1}^r T_{p_i(\mathbf{g}+\mathbf{h}) + q(\mathbf{g})} f_i \cdot \prod_{i=1}^r T_{p_i(\mathbf{g}+\mathbf{h'}) + q(\mathbf{g})} \bar{f}_i \ d\mu \\
			& \leq \limsup_{M \to \infty} \norm{\mathbb{E}_{(\mathbf{g},\mathbf{h},\mathbf{h'}) \in\Theta_M} \prod_{i=1}^r T_{p_i(\mathbf{g}+\mathbf{h}) + q(\mathbf{g})} f_i \cdot \prod_{i=1}^r T_{p_i(\mathbf{g}+\mathbf{h'}) + q(\mathbf{g})} \bar{f}_i }_{L^2(X)},
		\end{align*}
		where $q$ is any polynomial $F^d \to F$ of degree $b$. The system
		\[ \{ p_1(\mathbf{g} + \mathbf{h}) + q(\mathbf{g}), \ldots, p_r(\mathbf{g} + \mathbf{h}) + q(\mathbf{g}), p_1(\mathbf{g}+\mathbf{h'}) + q(\mathbf{g}), \ldots, p_r(\mathbf{g}+\mathbf{h'}) + q(\mathbf{g}) \}\]
		of polynomials $F^{3d} \to F$ is standard, of degree $b$, with $2r$ elements. Thus by Proposition \ref{BTZ factor is characteristic warmup} there exists $k \in \mathbb{N}$ depending on $r$ and $b$ such that
		\[ \lim_{M\to\infty} \mathbb{E}_{(\mathbf{g},\mathbf{h},\mathbf{h'}) \in\Theta_M} \prod_{i=1}^r T_{p_i(\mathbf{g}+\mathbf{h}) + q(\mathbf{g})} f_i \cdot \prod_{i=1}^r T_{p_i(\mathbf{g}+\mathbf{h'}) + q(\mathbf{g})} \bar{f}_i = 0 \]
		in $L^2(X)$ whenever $\nnorm{f_1}_k = 0$.
	\end{proof}
	\begin{remark}
		An alternative approach to proving Theorem~\ref{BTZ factor is characteristic} is to adapt the proof of \cite[Lemma~4.7]{fravariable} without the use of brackets. In doing so, we would not need to work with polynomials of several variables.
	\end{remark}
	\subsection{Proof of Theorem~\ref{findep}}
	We are now in a position to prove the main result of this paper:
\vspace{4pt}
	\begin{thmrep}[\ref{findep}]
		Let $F$ be a countable field with characteristic zero. Let $(X,\mu,\break (T_n)_{n \in F})$ be an ergodic $F$-system. Let $(\Phi_N)$ be a F\o lner sequence, let $k \in \mathbb{N}$, let $p_1,\dots,p_k \in F[x]$ be independent polynomials, and let $f_1,\dots,f_k \in L^{\infty}(\mu)$ be functions. Then
		\[ \lim_{N \to \infty} \mathbb{E}_{n \in \Phi_N} T_{p_1(n)}f_1\cdots T_{p_k(n)}f_k\ = \ \prod_{j=1}^k \int_X f_i \ d\mu\]
		in $L^2(\mu)$.
	\end{thmrep}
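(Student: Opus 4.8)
The plan is to deduce Theorem~\ref{findep} directly from the joint ergodicity criterion of Theorem~\ref{framainthm}, applied to the mappings $a_j := p_j : F \to F$. For this it suffices to verify the two hypotheses of that theorem for the given ergodic system, namely that the family $p_1,\dots,p_k$ is good for equidistribution and good for seminorm estimates; the bulk of the analytic content has already been packaged into Theorems~\ref{characterequidistribution} and \ref{BTZ factor is characteristic}, so the remaining task is essentially to match those statements to the relevant definitions. I would begin by recording the elementary consequences of independence that I will use throughout: taking the coefficient vectors $e_i$ and $e_i-e_j$ shows that each $p_i$ is nonconstant and that $p_i - p_j$ is nonconstant for $i \neq j$ (so the family is essentially distinct), and since any $F$-linear relation $\sum_i b_i p_i = 0$ is in particular a constant polynomial, independence forces the $p_i$ to be linearly independent over $F$.

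To check good for equidistribution, I would fix $\alpha_1,\dots,\alpha_k \in \mathrm{Spec}((T_n)_{n \in F})$ not all trivial and set $\chi_j := e(\alpha_j(\cdot)) \in \hat{F}$, so that the quantity to be controlled is $\mathbb{E}_{n \in \Phi_N} \prod_j \chi_j(p_j(n))$. Since Theorem~\ref{characterequidistribution} assumes $p_j(0)=0$, I would first strip the constant terms: writing $p_j = q_j + c_j$ with $c_j = p_j(0)$ and $q_j(0)=0$, the homomorphism property of $\chi_j$ gives $\chi_j(p_j(n)) = \chi_j(c_j)\,\chi_j(q_j(n))$, and the unimodular constants $\chi_j(c_j)$ factor out of the average without affecting whether the limit vanishes. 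The $q_j$ differ from the $p_j$ by constants, hence remain independent and in particular linearly independent; as the $\chi_j$ are not all trivial, Theorem~\ref{characterequidistribution} then gives that the average tends to $0$, which is exactly condition \ref{equidistribution1}.

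To check good for seminorm estimates, I would invoke Theorem~\ref{BTZ factor is characteristic} with $d=1$, $r=k$, and $b := 1 + \max_i \deg p_i$ to obtain a single $s \in \mathbb{N}$ such that a correlation average of $k$ nonconstant, essentially distinct polynomials of degree less than $b$ vanishes as soon as the seminorm $\nnorm{\,\cdot\,}_s$ of the first function is zero. The only mismatch with the definition of good for seminorm estimates is that there the vanishing seminorm may sit in an arbitrary slot $f_l$; but the factors $T_{p_j(n)}f_j$ are ordinary products of functions and hence commute, so I would simply relabel to put $p_l, f_l$ in first position, after which the reordered family is still nonconstant and essentially distinct of degree less than $b$, and Theorem~\ref{BTZ factor is characteristic} applies verbatim. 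This in fact gives more than is required, since the eigenfunction hypothesis appearing in the definition is never used.

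With both hypotheses in hand, Theorem~\ref{framainthm} yields the claimed $L^2$ convergence to $\prod_j \int_X f_j\,d\mu$. I do not expect any genuinely hard analytic step in this argument: the difficulty is entirely bookkeeping, and the two points that require a little care are the reduction to $q_j(0)=0$ in the equidistribution step --- needed to fit the normalization of Theorem~\ref{characterequidistribution} --- and the appeal to commutativity in the seminorm-estimate step to move the distinguished function into the first slot demanded by Theorem~\ref{BTZ factor is characteristic}.
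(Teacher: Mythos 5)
Your proposal is correct and follows essentially the same route as the paper: the paper's proof likewise reduces to $p_i(0)=0$ (via invariance of $\mu$ rather than by factoring $\chi_j(c_j)$ out of the exponential sum, but the two reductions are equivalent), notes that independence gives essential distinctness and linear independence, and then applies Theorems~\ref{framainthm}, \ref{characterequidistribution}, and \ref{BTZ factor is characteristic}. Your extra bookkeeping --- the relabeling to put the zero-seminorm function in the first slot and the observation that the eigenfunction hypothesis is not needed --- is accurate and merely makes explicit what the paper leaves implicit.
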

	\begin{proof}
		Without loss of generality we may assume that $p_i(0)=0$, $i=1,\dots,k$, since $\mu$ is invariant under $(T_n)_{n \in F}$. Note that a family of independent polynomials $p_1,\dots,p_k \in F[x]$ is essentially distinct. Apply Theorems~\ref{framainthm},~\ref{characterequidistribution},~and~\ref{BTZ factor is characteristic}.
	\end{proof}
	As a corollary, we can give a version of Theorem \ref{findep} for systems that are not necessarily ergodic:
	\begin{corollary}
		Let $F$ be a countable field with characteristic zero. Let $(X,\mu,\break (T_n)_{n \in F})$ be an $F$-system. Let $(\Phi_N)$ be a F\o lner sequence, let $k \in \mathbb{N}$, let $p_1,\dots,p_k \in F[x]$ be independent polynomials, and let $f_1,\dots,f_k \in L^{\infty}(\mu)$ be functions. Then
		\begin{equation}\label{fnonerg} \lim_{N \to \infty} \mathbb{E}_{n \in \Phi_N} T_{p_1(n)}f_1\cdots T_{p_k(n)}f_k\ = \ \prod_{j=1}^k \mathbb{E}[f_i|\mathcal{I}((T_n)_{n \in F})]
		\end{equation}
		in $L^2(\mu)$, where $\mathcal{I}((T_n)_{n \in F})$ is the $\sigma$-algebra of sets that are invariant under the $F$-action $(T_n)_{n \in F}$.
	\end{corollary}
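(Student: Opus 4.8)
The plan is to reduce to the ergodic case established in Theorem~\ref{findep} via the ergodic decomposition. Since $(X,\mathcal{B},\mu)$ is standard and $F$ is countable, the action $(T_n)_{n\in F}$ admits an ergodic decomposition: there is a probability space $(\Omega,\mathbb{P})$ and a $\mathbb{P}$-measurable family of ergodic $F$-invariant measures $(\mu_\omega)_{\omega\in\Omega}$ with $\mu=\int_\Omega \mu_\omega\,d\mathbb{P}(\omega)$, arising as the disintegration of $\mu$ over the invariant $\sigma$-algebra $\mathcal{I}((T_n)_{n\in F})$. In particular, for each $f\in L^\infty(\mu)$ one has $\mathbb{E}[f\mid\mathcal{I}((T_n)_{n\in F})]=\int_X f\,d\mu_\omega$ $\mu_\omega$-almost everywhere, for $\mathbb{P}$-almost every $\omega$.

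Write $A_N:=\mathbb{E}_{n\in\Phi_N}T_{p_1(n)}f_1\cdots T_{p_k(n)}f_k$ and $c_\omega:=\prod_{j=1}^k\int_X f_j\,d\mu_\omega$. Since each $\mu_\omega$ is an ergodic $F$-invariant measure and $p_1,\dots,p_k$ are independent, Theorem~\ref{findep} applied to the ergodic system $(X,\mu_\omega,(T_n)_{n\in F})$ yields
\[ \lim_{N\to\infty}\norm{A_N-c_\omega}_{L^2(\mu_\omega)}=0 \]
for $\mathbb{P}$-almost every $\omega\in\Omega$. Note that the polynomials $p_1,\dots,p_k$ and the F\o lner sequence $(\Phi_N)$ are fixed throughout, so the same F\o lner sequence may be used in each component.

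Finally I would integrate over the components. Because the $L^2(\mu)$-norm disintegrates as $\norm{g}_{L^2(\mu)}^2=\int_\Omega\norm{g}_{L^2(\mu_\omega)}^2\,d\mathbb{P}(\omega)$, and because $c_\omega$ agrees $\mu_\omega$-a.e.\ with $\prod_{j=1}^k\mathbb{E}[f_j\mid\mathcal{I}((T_n)_{n\in F})]$, we obtain
\[ \norm{A_N-\textstyle\prod_{j=1}^k\mathbb{E}[f_j\mid\mathcal{I}((T_n)_{n\in F})]}_{L^2(\mu)}^2=\int_\Omega\norm{A_N-c_\omega}_{L^2(\mu_\omega)}^2\,d\mathbb{P}(\omega). \]
Each integrand tends to $0$ by the previous step and is bounded uniformly in $N$ and $\omega$ by $\big(2\prod_{j=1}^k\norm{f_j}_{L^\infty(\mu)}\big)^2$, so the dominated convergence theorem yields \ref{fnonerg}.

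The argument is essentially routine; the only points requiring care are the measurability of $\omega\mapsto\norm{A_N-c_\omega}_{L^2(\mu_\omega)}^2$ and the disintegration identity for the $L^2$-norm, both of which follow from the standard theory of ergodic decompositions for countable group actions on standard probability spaces. The main (and only mild) obstacle is to confirm that the ergodic decomposition and the conditional expectation $\mathbb{E}[\,\cdot\mid\mathcal{I}((T_n)_{n\in F})]$ are compatible in the stated way; once this is in hand, the uniform $L^\infty$ bound makes the passage to the limit immediate.
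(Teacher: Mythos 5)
Your proposal is correct and follows essentially the same route as the paper: decompose $\mu$ into ergodic components, apply Theorem~\ref{findep} fiberwise (noting that the F\o lner sequence and polynomials are fixed), identify $\mathbb{E}[f_j\mid\mathcal{I}((T_n)_{n\in F})]$ with the fiberwise integrals, and conclude by dominated convergence using the uniform $L^\infty$ bound. The only difference is notational (the paper indexes the ergodic components by points of $X$ rather than an auxiliary space $\Omega$).
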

	\begin{proof}
		Let $\mu=\int_X \mu_x \ d\mu(x)$ be the ergodic decomposition of $\mu$ with respect to the action $(T_n)_{n \in F}$, so $\mu_x$ is ergodic for $\mu$-a.e. $x \in X$. Recall that one has
		\[ \mathbb{E}[f_i| \mathcal{I}((T_n)_{n \in F})](x)=\int_X f_i \ d\mu_x,\]
		where the equality holds $\mu$-a.e. Therefore, it suffices to show that
		\begin{equation}\label{fnonerg2} \int_X \left| \mathbb{E}_{n \in \Phi_N} T_{p_1(n)}f_1\cdots T_{p_k(n)}f_k-\prod_{i=1}^k \int_X f_i \ d\mu_x \right|^2 \ d\mu(x) \to 0
		\end{equation}
		as $N \to \infty$. Note that using the ergodic decomposition, the convergence in \ref{fnonerg2} holds if and only if
		\begin{equation}\label{fnonerg3} \int_X\left(\int_X \left| \mathbb{E}_{n \in \Phi_N} T_{p_1(n)}f_1\cdots T_{p_k(n)}f_k-\prod_{i=1}^k \int_X f_i \ d\mu_x \right|^2 \ d\mu_x(x) \right)\ d \mu(x) \to 0.
		\end{equation}
		To see that \ref{fnonerg3} holds, apply Theorem~\ref{findep} to the term in parentheses in equation \ref{fnonerg3} to deduce (since $\mu$-a.e. measure $\mu_x$ is ergodic) that the aforementioned term (seen as a pointwise limit function of $x$) is equal to $0$ almost everywhere. Since it is bounded by assumption on the functions $f_i$, \ref{fnonerg3} follows from the dominated convergence theorem.
	\end{proof}
	\subsection{Corollaries}
	We now deduce some consequences of the results in the previous subsection.
\vspace{4pt}
	\begin{correp}[\ref{largesetspoly}]
		Let $F$ be a countable field with characteristic zero. Let $E \subseteq F$ with $d^*(E)>0$, $k \in \mathbb{N}$, and $p_1,\dots,p_k \in F[x]$ be independent polynomials. Then, for each $\varepsilon>0$, the set $\{ n \in F : d^*(E \cap (E-p_1(n)) \cap \dots \cap (E-p_k(n)))>d^*(E)^{k+1}-\varepsilon\}$ is syndetic.
	\end{correp}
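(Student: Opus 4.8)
The plan is to deduce the corollary from Theorem~\ref{findep} (in its non-ergodic form) by means of the Furstenberg correspondence principle, converting the combinatorial intersection statement into a statement about multiple recurrence averages and then extracting syndeticity from a lower bound on those averages. Concretely, I would first apply the correspondence principle for the countable amenable group $(F,+)$: choosing a Følner sequence realizing the upper Banach density of $E$, it produces a system $(X,\mu,(T_n)_{n \in F})$ and a set $A$ with $\mu(A) = d^*(E)$ such that, for every $n \in F$,
\[ d^*\!\left(E \cap (E - p_1(n)) \cap \cdots \cap (E - p_k(n))\right) \ \geq \ \int_X 1_A \cdot \prod_{j=1}^k T_{p_j(n)} 1_A \ d\mu \ =: \ c(n), \]
where I absorb the sign convention of the principle into the $p_j$ (legitimate, since negating each $p_j$ preserves independence), so that $c(n) = \mu\!\left(A \cap \bigcap_j T_{p_j(n)}^{-1} A\right) \in [0,1]$. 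Because a superset of a syndetic set is syndetic, it then suffices to prove that $\{n : c(n) > d^*(E)^{k+1} - \varepsilon\}$ is syndetic.

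Next I would compute the Cesàro limit of $c(n)$. Since the system need not be ergodic, I apply the non-ergodic version of Theorem~\ref{findep} (the corollary following it) with $f_1 = \cdots = f_k = 1_A$: for every Følner sequence $(\Psi_N)$ in $F$ one has $\mathbb{E}_{n \in \Psi_N} \prod_{j=1}^k T_{p_j(n)} 1_A \to g^k$ in $L^2(\mu)$, where $g := \mathbb{E}[1_A \mid \mathcal{I}((T_n)_{n \in F})]$. Pairing with $1_A$ (continuity of the inner product and finiteness of each $\Psi_N$) gives $\mathbb{E}_{n \in \Psi_N} c(n) \to \int_X 1_A\, g^k\, d\mu$. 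Since $g^k$ is $\mathcal{I}$-measurable and $\mathbb{E}[1_A\mid\mathcal{I}] = g$, this equals $\int_X g^{k+1}\, d\mu$, and Jensen's inequality applied to $t \mapsto t^{k+1}$ yields $\int_X g^{k+1}\, d\mu \geq \left(\int_X g\, d\mu\right)^{k+1} = \mu(A)^{k+1} = d^*(E)^{k+1}$. Thus along \emph{every} Følner sequence, $\lim_N \mathbb{E}_{n \in \Psi_N} c(n) \geq d^*(E)^{k+1}$.

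Finally I would run the standard thick/syndetic duality argument. Suppose $\{n : c(n) > d^*(E)^{k+1} - \varepsilon\}$ is not syndetic. Then its complement $B = \{n : c(n) \leq d^*(E)^{k+1} - \varepsilon\}$ is thick, so, translating a fixed Følner sequence into $B$, one obtains a Følner sequence $(\Psi_N)$ with $\Psi_N \subseteq B$ for all $N$. But then $\mathbb{E}_{n \in \Psi_N} c(n) \leq d^*(E)^{k+1} - \varepsilon$ for every $N$, contradicting the lower bound just obtained. Hence the set is syndetic, and the corollary follows.

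The genuinely substantive input here is Theorem~\ref{findep}; the corollary is a soft deduction, so I expect no serious obstacle. The points requiring the most care are purely bookkeeping: verifying that the correspondence principle together with a density-realizing Følner sequence gives $\mu(A) = d^*(E)$ exactly (so that the lower bound $\mu(A)^{k+1} = d^*(E)^{k+1}$ is the desired one), and checking the thick/syndetic duality for $(F,+)$ — namely that the complement of a non-syndetic set is thick and that a thick set contains a Følner sequence, both of which follow from the translation-invariance of the Følner property.
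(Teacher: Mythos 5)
Your proposal is correct, and its skeleton (correspondence principle $\to$ lower bound on the averages of $n \mapsto d^*(E\cap(E-p_1(n))\cap\cdots\cap(E-p_k(n)))$ along every F\o lner sequence $\to$ syndeticity via thick/syndetic duality) matches the paper's. The one genuine difference is in the middle step. The paper invokes an \emph{ergodic} version of the Furstenberg correspondence principle (\cite[Theorem~2.8]{bf}), which produces an ergodic system with $\mu(A)=d^*(E)$ outright; Theorem~\ref{findep} then applies directly and the limit of the averages is exactly $\mu(A)^{k+1}=d^*(E)^{k+1}$, with no further work. You instead use the plain (non-ergodic) correspondence principle, apply the non-ergodic corollary of Theorem~\ref{findep} with $f_1=\cdots=f_k=1_A$, and recover the lower bound $\int_X g^{k+1}\,d\mu \geq \mu(A)^{k+1}$ via Jensen's inequality for $g=\mathbb{E}[1_A\mid\mathcal{I}]$. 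This buys you independence from the less standard ergodic correspondence principle at the cost of routing through the ergodic decomposition (which is what underlies the non-ergodic corollary) and a convexity argument; the paper's route is shorter because the ergodicity is front-loaded into the correspondence step. Your final step is also sound: the paper outsources it to \cite[Lemma~1.9]{abb}, whereas you correctly spell out that a non-syndetic set has thick complement, that a thick set in $(F,+)$ contains translates of every finite set and hence a translated F\o lner sequence, and that the limit of the averages along that sequence would then be at most $d^*(E)^{k+1}-\varepsilon$, a contradiction. One small bookkeeping point you already flag and handle correctly: $d^*$ is a supremum over F\o lner sequences, so one needs a diagonalization to produce a single F\o lner sequence realizing it (or one settles for $\mu(A)\geq d^*(E)-\delta$, which also suffices here).
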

	\begin{proof}
		This follows from an ergodic Furstenberg correspondence principle. Indeed, by \cite[Theorem~2.8]{bf} there exists an ergodic measure-preserving system $(X,\mathcal{B},\mu,\break (T_n)_{n \in F})$ and a set $A \in \mathcal{B}$ with $\mu(A)=d^*(E)$ such that for all $k \in \mathbb{N}$, $n_1,\dots,n_k \in F$, we have
		\[ d^*(E \cap (E-n_1) \cap \dots \cap (E-n_k)) \geq \mu(A \cap  T_{-n_1} A \cap \dots \cap T_{-n_k}A).\]
		By Theorem \ref{findep} it follows that for every F\o lner sequence $(\Phi_N)$ we have
		\[ \liminf_{N \to \infty} \mathbb{E}_{n \in \Phi_N} d^*(E \cap (E-p_1(n)) \cap \dots \cap (E-p_k(n))) \geq d^*(E)^{k+1}.\]
		Finally, apply \cite[Lemma~1.9]{abb}.
	\end{proof}
	In particular, we can apply Corollary \ref{largesetspoly} to obtain the following consequence.
	\begin{corollary}
		Let $F$ be a countable field with characteristic zero. Let $k \in \mathbb{N}$. Let $p_1,\dots,p_k \in F[x]$ be a family of independent polynomials and suppose that we have colored $F$ into $r$ distinct colors: $F=\bigsqcup_{i=1}^r C_i$ (here the union is disjoint). Then, there exists a color $i_0$ such that the set $\{ n \in F : C_{i_0} \cap (C_{i_0}-p_1(n)) \cap \dots \cap (C_{i_0}-p_k(n)) \neq \emptyset\}$ is syndetic.
	\end{corollary}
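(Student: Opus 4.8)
The plan is to reduce this coloring statement to the density statement of Corollary~\ref{largesetspoly} via pigeonhole. First I would produce a single color $C_{i_0}$ with $d^*(C_{i_0})>0$. To do this, fix any F\o lner sequence $(\Phi_N)$ in $F$. Since the $C_i$ partition $F$, we have $\sum_{i=1}^r |C_i \cap \Phi_N|/|\Phi_N| = 1$ for every $N$; passing to the $\limsup$ and using subadditivity of $\limsup$ gives $\sum_{i=1}^r \bar{d}_{(\Phi_N)}(C_i) \geq 1$. Hence some index $i_0$ satisfies $\bar{d}_{(\Phi_N)}(C_{i_0}) \geq 1/r$, and therefore $d^*(C_{i_0}) \geq \bar{d}_{(\Phi_N)}(C_{i_0}) \geq 1/r > 0$.

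With such an $i_0$ fixed, I would apply Corollary~\ref{largesetspoly} to the set $E := C_{i_0}$ with any $\varepsilon$ satisfying $0 < \varepsilon < d^*(C_{i_0})^{k+1}$. The corollary then produces a syndetic set $S$ of $n \in F$ for which
\[ d^*\bigl(C_{i_0} \cap (C_{i_0}-p_1(n)) \cap \dots \cap (C_{i_0}-p_k(n))\bigr) > d^*(C_{i_0})^{k+1} - \varepsilon > 0.\]
The key observation is that any subset of $F$ with positive upper Banach density is in particular nonempty. Thus for every $n \in S$ we have $C_{i_0} \cap (C_{i_0}-p_1(n)) \cap \dots \cap (C_{i_0}-p_k(n)) \neq \emptyset$, which says precisely that there exists $m \in C_{i_0}$ with $m + p_j(n) \in C_{i_0}$ for all $j = 1,\dots,k$.

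Finally, I would note that $S$ is contained in the set $T := \{ n \in F : C_{i_0} \cap (C_{i_0}-p_1(n)) \cap \dots \cap (C_{i_0}-p_k(n)) \neq \emptyset\}$ appearing in the statement, and that any superset of a syndetic set is syndetic: if $F = \bigcup_{n \in K}(S-n)$ for a finite $K$, then also $F = \bigcup_{n \in K}(T-n)$. Hence $T$ is syndetic, completing the argument. No step here presents a genuine obstacle; the only point requiring a little care is the pigeonhole estimate, where one must invoke subadditivity of $\limsup$ (rather than hoping for additivity of the densities) to guarantee that at least one color retains positive density.
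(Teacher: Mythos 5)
Your proof is correct and follows exactly the route the paper intends: the paper states this corollary as an immediate consequence of Corollary~\ref{largesetspoly}, and your pigeonhole argument (producing a color with $d^*(C_{i_0})\geq 1/r$), choice of $\varepsilon < d^*(C_{i_0})^{k+1}$, and the observation that positive density implies nonemptiness fill in precisely the intended details.
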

	We also have a result in topological dynamics.

\vspace{4pt}
	\begin{correp}[\ref{cor: findep}] Let $F$ be a countable field with characteristic zero. Let $(X,d,\break (T_n)_{n \in F})$ be a minimal topological dynamical system. Let $k \in \mathbb{N}$ and $p_1,\dots,p_k \in F[x]$ be a family of independent polynomials. Then there exists a dense $G_{\delta}$ set $X_0 \subseteq X$ such that for every $x \in X_0$ we have
		\begin{equation}\label{closure} \overline{\{(T_{p_1(n)}x,\dots,T_{p_k(n)}x) : n \in F\}}=\underbrace{X \times \dots \times X}_{k\ \mathrm{times}}.
		\end{equation}
	\end{correp}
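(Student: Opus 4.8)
The plan is to deduce this topological statement from the measure-theoretic Theorem~\ref{findep}, via the existence of an ergodic invariant measure of full support on a minimal system over the amenable group $(F,+)$.

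First I would set up the $G_{\delta}$ structure. Since $(X,d)$ is a compact metric space it is second countable; fix a countable basis $\{U_i\}_{i \in \mathbb{N}}$ consisting of nonempty open sets. The boxes $V = U_{i_1} \times \cdots \times U_{i_k}$ then form a countable basis for $X^k$, and the orbit closure in \ref{closure} equals $X^k$ precisely when the polynomial orbit of $x$ meets every such $V$. Writing
\[ W_V := \bigcup_{n \in F} \{ x \in X : (T_{p_1(n)}x,\dots,T_{p_k(n)}x) \in V \}, \]
each $W_V$ is open, being a union of preimages of the open set $V$ under the continuous maps $x \mapsto (T_{p_1(n)}x,\dots,T_{p_k(n)}x)$; and $X_0 := \bigcap_V W_V$ is exactly the set of $x$ for which \ref{closure} holds. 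As a countable intersection of open sets, $X_0$ is $G_{\delta}$, so it remains only to prove density.

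Next I would produce the measure. Since $F$ is a countable, hence amenable, abelian group, the set of $(T_n)_{n \in F}$-invariant Borel probability measures on $X$ is nonempty, compact and convex; let $\mu$ be one of its extreme points, so that $\mu$ is ergodic. By minimality the support of $\mu$ is a nonempty closed invariant set, hence all of $X$, so $\mu(U) > 0$ for every nonempty open $U$. Fixing a box $V = U_{i_1} \times \cdots \times U_{i_k}$ and applying Theorem~\ref{findep} to the ergodic system $(X,\mu,(T_n)_{n \in F})$ with $f_j = 1_{U_{i_j}}$ gives
\[ \lim_{N \to \infty} \mathbb{E}_{n \in \Phi_N} T_{p_1(n)}1_{U_{i_1}} \cdots T_{p_k(n)}1_{U_{i_k}} \ = \ \prod_{j=1}^k \mu(U_{i_j}) \ =: \ c_V \ > \ 0 \]
in $L^2(\mu)$, where full support ensures $c_V > 0$. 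The function being averaged equals, at $x$, the indicator of the event $(T_{p_1(n)}x,\dots,T_{p_k(n)}x) \in V$, so it takes values in $\{0,1\}$; passing to a subsequence along which the $L^2$ limit is attained $\mu$-almost everywhere, I conclude that for $\mu$-a.e.\ $x$ some $n \in F$ satisfies $(T_{p_1(n)}x,\dots,T_{p_k(n)}x) \in V$, i.e.\ $\mu(W_V) = 1$.

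Finally, since there are only countably many boxes $V$, I obtain $\mu(X_0) = \mu\big( \bigcap_V W_V \big) = 1$. A full-measure set for a measure of full support is dense, as its complement contains no nonempty open set; hence $X_0$ is a dense $G_{\delta}$, as required. The only substantial input is Theorem~\ref{findep}; the step needing care here is the selection of an ergodic invariant measure of full support, which is where minimality (yielding full support) and amenability of $F$ (yielding an invariant, and hence an ergodic, measure) enter.
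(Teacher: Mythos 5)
Your proof is correct, and it uses the same essential input as the paper --- an ergodic invariant measure of full support (minimality giving full support, amenability of $(F,+)$ giving existence) together with Theorem~\ref{findep} applied to indicator functions of basic open sets, and the same candidate set $X_0=\bigcap_V W_V$. Where you diverge is in the final density step. The paper shows each open set $W_V=\bigcup_{n\in F}(T_{-p_1(n)}V_1\cap\dots\cap T_{-p_k(n)}V_k)$ is \emph{topologically} dense, by testing against an arbitrary nonempty open $U$ (integrating the product against $1_U$ and using $\mu(U)\prod_j\mu(V_j)>0$ to conclude the return-time set is nonempty, indeed syndetic), and then invokes the Baire category theorem. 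You instead show $\mu(W_V)=1$ directly --- via a.e.\ convergence along a subsequence, though one can also just note $\int_{W_V^c}\mathbb{E}_{n\in\Phi_N}(\cdots)\,d\mu=0$ for every $N$ and pass to the limit --- so that $\mu(X_0)=1$ by countable additivity, and density follows from full support without any appeal to Baire. Your route buys the slightly stronger conclusion that $X_0$ has full measure for the chosen ergodic measure (a dense $G_\delta$ need not), while the paper's route records the syndeticity of the visit-time sets as a byproduct; for the statement as given, both are complete.
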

	\begin{proof}
		Let $\mu$ be an ergodic $(T_n)_{n \in F}$-invariant measure on $X$. Since $(T_n)_{n \in F}$ is minimal, it follows that $\mu(U)>0$ for any non-empty open subset of $X$. Thus, for any non-empty open sets $U,V_1,\dots,V_k$, Theorem \ref{findep} implies that the set
		\begin{equation}\label{dense} \{ n \in F : U \cap T_{-p_1(n)}V_1\cap\dots\cap T_{-p_k(n)}V_k \neq \emptyset \}
		\end{equation}
		is infinite (even syndetic). To finish, we adapt \cite[Lemma~2.4]{huangshaoye} to our setting. Let $\mathcal{F}$ be a countable basis for the compact metric space $(X,d)$, and let
		\[ X_0:=\bigcap_{V_1,\dots,V_k \in \mathcal{F}} \bigcup_{n \in F} (T_{-p_1(n)}V_1 \cap \dots \cap T_{-p_k(n)}V_k).\]
		We have that $X_0$ is a countable intersection of open sets which are dense by \ref{dense}, so by the Baire category theorem, it is a dense $G_{\delta}$ set of points of $X$, each of which satisfies \ref{closure}.
	\end{proof}
	Using the fact that any topological dynamical system contains a minimal subsytem (which follows from Zorn's lemma), we see that Corollary~\ref{cor: findep} readily implies the following.
	\begin{corollary}
		Let $F$ be a countable field with characteristic zero. Let $(X,d,\break (T_n)_{n \in F})$ be a topological dynamical system, where $(X,d)$ is a compact metric space. Let $k \in \mathbb{N}$ and $p_1,\dots,p_k \in F[x]$ be a family of independent polynomials. Then, there exists a non-empty subset $F \subseteq X$ such that for each $x \in F$ we have
		\[ \overline{\{(T_{p_1(n)}x,\dots,T_{p_k(n)}x) : n \in F\}}=\overline{\{ T_nx : n \in F\}} \times \dots \times \overline{\{T_nx : n \in F\}}\]
	\end{corollary}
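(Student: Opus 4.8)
The plan is to reduce to the minimal case already settled in Corollary~\ref{cor: findep} by passing to a minimal subsystem. First I would invoke the standard fact, which follows from Zorn's lemma, that the compact system $(X,d,(T_n)_{n \in F})$ contains a non-empty closed $F$-invariant subset $Y \subseteq X$ on which the induced action is minimal: one orders the non-empty closed invariant subsets by reverse inclusion, observes that every chain has an upper bound by the finite intersection property (compactness), and extracts a maximal element, i.e. a minimal closed invariant set. Being a closed subset of a compact metric space, $(Y,d|_Y)$ is itself a compact metric space, and $(Y,d|_Y,(T_n|_Y)_{n \in F})$ is a minimal topological dynamical system.

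Next I would apply Corollary~\ref{cor: findep} to this minimal system. This produces a dense $G_{\delta}$ set $Y_0 \subseteq Y$ such that for every $x \in Y_0$,
\[ \overline{\{(T_{p_1(n)}x,\dots,T_{p_k(n)}x) : n \in F\}}=\underbrace{Y \times \dots \times Y}_{k\ \mathrm{times}}.\]
In particular $Y_0$ is non-empty, being dense in the non-empty space $Y$.

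Finally I would match this against the stated conclusion using minimality. For any $x \in Y$, minimality of $Y$ forces the orbit closure $\overline{\{T_n x : n \in F\}}$ to equal $Y$, since it is a non-empty closed invariant subset of the minimal set $Y$. Hence for each $x \in Y_0$ every factor $\overline{\{T_n x : n \in F\}}$ equals $Y$, so the product $\overline{\{T_n x : n \in F\}} \times \dots \times \overline{\{T_n x : n \in F\}}$ coincides with $Y \times \dots \times Y$, which is exactly the left-hand closure computed above. Taking the required non-empty subset of $X$ to be $Y_0$ then completes the argument.

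As for obstacles, there is essentially no analytic difficulty here: all the dynamical content is already packaged in Corollary~\ref{cor: findep}, and the remaining points are purely topological, namely the existence of the minimal subsystem via Zorn's lemma and the observation that orbit closures inside a minimal set fill out the whole set. The one thing demanding care is the notational clash, since the statement reuses the letter $F$ both for the field and for the sought subset of $X$; I would phrase the conclusion in terms of $Y_0$ to keep the two distinct.
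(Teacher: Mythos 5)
Your argument is correct and is exactly the route the paper takes: it deduces this corollary from Corollary~\ref{cor: findep} by passing to a minimal subsystem obtained via Zorn's lemma and using that orbit closures in a minimal set are the whole set. Your observation about the notational clash between the field $F$ and the subset $F\subseteq X$ is also well taken.
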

	\begin{remark}
		With some minor modifications, the results of this section can be extended to locally compact second-countable fields $F$ with characteristic zero, such as $\mathbb{R}$ or $\mathbb{C}$. We present the case of countable $F$ with characteristic zero for the sake of simplicity in exposition.
	\end{remark}
	\section{Joint ergodicity for totally ergodic actions of certain rings}\label{sec: 5}
	
	In this section, we sketch the proof of Theorem~\ref{rindep}. First, we recall some relevant definitions.

	\begin{definition}
		Let $R$ be a commutative ring. We say that $R$ is \emph{good} if it is a countable integral domain with characteristic zero, and every non-zero ideal has finite index in $R$.
	\end{definition}
	For good rings, we define total ergodicity in the following manner.
	\begin{definition}
		Let $R$ be a good ring and let $(X,\mathcal{B},\mu,(T_r)_{r \in R})$ be a measure preserving system. We say that $(T_r)_{r \in R}$ is \emph{totally ergodic} if, for every finite index subgroup $J \subseteq (R,+)$, the action $(T_n)_{n \in J}$ is ergodic.
	\end{definition}
	Let $R$ be a good ring. If $(T_r)_{r \in R}$ is a totally ergodic action on a probability space $(X,\mathcal{B},\mu)$, then, in particular, for every F\o lner sequence $(\Phi_N)$ in $R$, every $r \in R \setminus \{0\}$, and every function $f \in L^2(\mu)$, one has
	\[ \lim_{N \to \infty} \mathbb{E}_{n \in \Phi_N} T_{rn}f \ = \ \int_X f \ d\mu,\]
	where the convergence holds in $L^2(\mu)$. This observation justifies the following remark.
	\begin{remark} Let $R$ be a good ring. If $(T_r)_{r \in R}$ is a totally ergodic action on a probability space $(X,\mathcal{B},\mu)$, and $f \in \mathcal{E}((T_r)_{r \in R})$, where $T_rf=\chi(r)f$, for some $\chi \in \hat{R}$, then
		\[ \chi(b \, \cdot) \not\equiv 1 \]
		for all $b \in R \setminus\{0\}$. We will refer to characters  $\chi \in \hat{R}$ with this property as ``irrational characters''.
	\end{remark}
	
	For convenience, we restate Theorem~\ref{rindep} below.
	
\vspace{4pt}
	\begin{thmrep}[\ref{rindep}]
		Let $R$ be a good ring. Let $(X,\mu,(T_r)_{r \in R})$ be a totally ergodic $R$-system. Let $(\Phi_N)$ be a F\o lner sequence in $(R,+)$, let $k \in \mathbb{N}$, let $p_1,\dots,p_k \in R[x]$ be independent polynomials, and let $f_1,\dots,f_k \in L^{\infty}(\mu)$ be functions. Then
		\[ \lim_{N \to \infty} \frac{1}{|\Phi_N|}\sum_{n\in \Phi_N} T_{p_1(n)}f_1\cdots T_{p_k(n)}f_k=\prod_{j=1}^k \int_X f_i \ d\mu\]
		in $L^2(\mu)$.
	\end{thmrep}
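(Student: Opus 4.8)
The plan is to obtain Theorem~\ref{rindep} from Theorem~\ref{framainthm} in exactly the way Theorem~\ref{findep} was obtained from it: after normalizing $p_i(0)=0$, it suffices to show that a family of independent polynomials $p_1,\dots,p_k\in R[x]$ is good for equidistribution and good for seminorm estimates for every totally ergodic $R$-system. The single structural feature replacing the field division used in Section~\ref{sec: 4} is that, by the Remark above, the spectrum of a totally ergodic good-ring action consists of irrational characters: if $T_rf=\chi(r)f$ for a nonconstant $f$, then $\chi(b\,\cdot)\not\equiv 1$ for every $b\in R\setminus\{0\}$. This is the one place where total ergodicity—rather than mere ergodicity—is indispensable, and it is available precisely because every nonzero ideal of a good ring has finite index.

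For the equidistribution hypothesis I would follow the proof of Theorem~\ref{characterequidistribution} line by line. Since $R$ is an integral domain of characteristic zero, the van der Corput reduction to linear exponential sums is unaffected: after differencing $d-1$ times, each $\Delta_{h_1}\cdots\Delta_{h_{d-1}}p_i$ is honestly linear with a nonzero leading coefficient $a_i\in R$. The only step that genuinely used a field was the removal of a character relation by dividing $p_s$ by $a_s$; over $R$ I would instead clear denominators inside the fraction field $\mathrm{Frac}(R)$ to perform the formal bookkeeping on characters, the essential point being that independence of the family is preserved. The terminal estimate $\mathbb{E}_{n\in\Phi_N}\chi(a_i n)\to 0$ then holds because $\chi$ is irrational, so $\chi(a_i\,\cdot)$ is a nontrivial character of $R$ and the translation argument closing Theorem~\ref{characterequidistribution} applies verbatim.

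For the seminorm estimates I would rerun the PET induction of Proposition~\ref{BTZ factor is characteristic warmup} together with the dimension-increment reduction of Theorem~\ref{BTZ factor is characteristic}; both are purely combinatorial and valid over any countable abelian group, so they transfer without change. Field structure entered only in the linear base case, Lemmas~\ref{easy linear bound} and~\ref{linear seminorm identity}, through the fact that a nonzero linear form $p(\mathbf g)=a_1g_1+\cdots+a_dg_d$ is surjective onto $F$. Over a good ring the image of $p$ is instead the nonzero ideal $J=(a_1,\dots,a_d)$, which has finite index; total ergodicity then makes the subaction $(T_n)_{n\in J}$ ergodic on $X$, its invariant functions are again the constants, and the analogue of Lemma~\ref{easy linear bound} survives with the same bound $\nnorm{f}_1$.

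The hard part is the analogue of Lemma~\ref{linear seminorm identity}, which lives on the cube system $X^{[k]}$ rather than on $X$. Writing $F=\bigotimes_{\varepsilon\in\{0,1\}^k}\mathcal{C}^{|\varepsilon|}f$, the mean ergodic theorem gives $\lim_N\mathbb{E}_{\mathbf g\in\Phi_N}\nnorm{f\cdot T_{p(\mathbf g)}\bar f}_k^{2^k}=\norm{P_1F}_{L^2(\mu^{[k]})}^2$, where $P_1$ projects onto the $J$-invariants of $(T^{[k]}_n)_{n\in J}$; over a field $P_1$ equals the projection $P_2$ onto the full $R$-invariants and one gets exactly $\nnorm{f}_{k+1}^{2^{k+1}}$, but over a ring $P_1\ge P_2$ can be strict, since differences of irrational characters of $X$ may be rational and thus yield functions on $X^{[k]}$ that are $J$-invariant without being $R$-invariant. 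Thus the clean identity degrades to the one-sided bound $\lim_N\mathbb{E}_{\mathbf g\in\Phi_N}\nnorm{f\cdot T_{p(\mathbf g)}\bar f}_k^{2^k}\ge\nnorm{f}_{k+1}^{2^{k+1}}$, and the surplus $\norm{P_1F}^2-\norm{P_2F}^2$ must be controlled. I expect this to be the main obstacle, and I would resolve it by showing that the rational-character part of the cube function $F$ is itself governed by a higher-step seminorm of $f$: because the eigenfunctions appearing in the tail of the ``good for seminorm estimates'' hypothesis of Theorem~\ref{framainthm} are irrational, total ergodicity forces any rational character on $X^{[k]}$ to arise only through a genuine cancellation of irrational characters, which can be absorbed into $\nnorm{f}_{k'}$ for some $k'>k+1$. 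Granting this, the seminorm estimate of Theorem~\ref{BTZ factor is characteristic} holds for good rings with a possibly larger step $k$—harmless, as only the existence of some $k$ is required—and feeding the two verified hypotheses into Theorem~\ref{framainthm} yields Theorem~\ref{rindep}.
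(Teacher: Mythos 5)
Your skeleton is the paper's: reduce to Theorem~\ref{framainthm}, verify equidistribution and seminorm estimates, and use total ergodicity only through the fact that all spectral characters are irrational. But at both points where the ring genuinely differs from the field, your fix is either invalid or left as an unproven claim. First, equidistribution: you cannot ``clear denominators inside $\mathrm{Frac}(R)$,'' because the $\chi_i$ are characters of $(R,+)$ and do not evaluate at, or extend to, elements of the fraction field; the substitution $\chi_s(p_s(n))=\chi_s(a_s\cdot p_s(n)/a_s)$ is exactly the step that is illegal over a ring. The paper's actual mechanism is a coset decomposition along the finite-index ideal $(b_k)$: write $\Phi_N=\bigcup_{i=1}^s b_kA_N(i)+r_i$, check that each $(A_N(i))$ is a F\o lner sequence carrying asymptotic proportion $1/s$ of $\Phi_N$, and reparametrize $n\mapsto b_kn+r_i$, under which $p_k(b_kn+r_i)-p_k(r_i)$ is divisible by $b_k$ in $R[x]$, so the character relation becomes applicable and $\chi_k$ can be deleted. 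None of this bookkeeping (in particular the F\o lner and density verification for $A_N(i)$, which uses $[R:(b_k)]<\infty$) appears in your argument.

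Second, the cube-system lemma. You correctly see that on $X^{[k]}$ the $J$-invariant projection $P_1$ can strictly dominate the $R$-invariant projection $P_2$, and that the direction needed for the induction is an upper bound on $\lim_N\mathbb{E}_{\mathbf g\in\Phi_N}\nnorm{f\cdot T_{p(\mathbf g)}\bar f}_k^{2^k}$. But your proposed control of the ``surplus'' via rational characters arising from cancellation of irrational ones, absorbed into some $\nnorm{f}_{k'}$ with $k'>k+1$, is asserted, not proved, and it is where you yourself locate the main obstacle. It is also unnecessary: by the mean ergodic theorem for the $J$-action, $\norm{P_1F}^2=\lim_N\mathbb{E}_{g\in\Psi_N\cap J}\nnorm{f\cdot T_g\bar f}_k^{2^k}$, and since the summands are nonnegative and $|\Psi_N|/|\Psi_N\cap J|\to[R:J]$, this is at most $[R:J]\cdot\lim_N\mathbb{E}_{g\in\Psi_N}\nnorm{f\cdot T_g\bar f}_k^{2^k}=[R:J]\,\nnorm{f}_{k+1}^{2^{k+1}}$. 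That one-line domination (and the analogous $c\,\nnorm{f}_2$ bound in the degree-one base case) is the whole resolution; the constant $[R:J]$ propagates harmlessly through the PET induction. As written, your proof is incomplete at exactly these two steps.
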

	

	\begin{remark}
		Let $K$ be a number field. It is classical that its ring of integers $\mathcal{O}_K$ satisfies the definition of a good ring. Thus, Theorem~\ref{rindep} recovers, without the use of structure theory, \cite[Theorem~3.1]{ab} as a special case (see also item 2 in their abstract).
	\end{remark}
	
	We will now provide a sketch of the proof of Theorem~\ref{rindep}. Once again, we use Theorem~\ref{framainthm}, which reduces matters to proving an equidistribution result and establishing relevant seminorm estimates.
	
	First, we show the equidistribution result.
	
	\begin{theorem}\label{characterequidistributionrings}
		Let $R$ be a good ring. Let $\chi_1,\dots,\chi_k \in \hat{R}$ be irrational characters, not all trivial. Let $(\Phi_N)$ be a F\o lner sequence in $R$ and let $p_1,\dots,p_k \in R[x]$ be linearly independent polynomials with $p_i(0)=0$, $i=1,\dots,k$. Then
		\begin{equation}\label{qcharacteravgrings}
			\lim_{N \to \infty} \mathbb{E}_{n \in \Phi_N} \chi_1(p_1(n))\cdots \chi_k(p_k(n))=0.
		\end{equation}
	\end{theorem}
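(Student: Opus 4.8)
The plan is to follow the proof of Theorem~\ref{characterequidistribution} as closely as possible, substituting, for the places where the field proof divides, arguments valid in a good ring, where the only structural facts available are that $R$ is a characteristic-zero integral domain and that every nonzero ideal has finite index. First I would isolate the one step of the field argument that is purely group-theoretic: the final reduction showing $\lim_N \mathbb{E}_{n \in \Phi_N} \chi(n) = 0$ for every nontrivial $\chi \in \hat{R}$. That argument uses only shift-invariance of the F\o lner averages and compactness of the unit disk (if $z$ is a limit point along some $(\Phi_{N_k})$ and $\chi(q) \neq 1$, then $z = z\chi(q)$ forces $z = 0$), so it transfers verbatim. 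Next, because $R$ has characteristic zero and no zero divisors, for nonconstant $p \in R[x]$ and $h \neq 0$ the difference $\Delta_h p := p(\,\cdot + h) - p$ has degree exactly $\deg p - 1$ with nonzero leading coefficient (a product of nonzero ring elements). Hence the van der Corput inequality of \cite[Theorem~2.12]{bm} applies just as over a field, and iterating it $d-1$ times, with $d := \max_i \deg p_i$, reduces \ref{qcharacteravgrings} to showing that for almost every $(h_1, \dots, h_{d-1})$ one has $\lim_N \mathbb{E}_{n \in \Phi_N} \prod_{i \in S} \chi_i\bigl(c_i\,d!\,h_1\cdots h_{d-1}\,n + b_i\bigr) = 0$, where $S = \{i : \deg p_i = d\}$, the $c_i$ are the leading coefficients of the top-degree $p_i$, and the lower-degree polynomials have either vanished or contributed only constant phases under the iterated difference.

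The heart of the matter, and the step I expect to be the main obstacle, is to verify that the surviving character is nontrivial. Writing $\Psi := \prod_{i \in S} \chi_i(c_i\,\cdot\,) \in \hat{R}$ and $H := d!\,h_1 \cdots h_{d-1} \neq 0$, the reduced average equals a constant times $\mathbb{E}_{n \in \Phi_N} \Psi(Hn)$, which vanishes by the first step provided $\Psi(H\cdot) \not\equiv 1$. Over a field one guarantees this by first clearing all nontrivial relations $\prod_i \chi_i(a_i n) \equiv 1$ among the characters, using division to absorb one character into the others while preserving linear independence of the $p_i$. In the ring setting I would instead exploit the shift freedom $\prod_i \chi_i(p_i(n)) = \prod_i \chi_i(p_i(n) + a_i r(n))$, valid for any $r \in R[x]$ whenever $(a_i)$ is such a relation (since then $\prod_i \chi_i(a_i r(n)) = 1$), to try to move the system into a relation-free form. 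The eliminations would be carried out over the fraction field $K = \mathrm{Frac}(R)$, where the $K$-linear independence of the $p_i$ persists, with denominators cleared against the finite-index hypothesis; once no relation survives, the vector $(c_i b)_{i \in S}$ with $b \neq 0$ is a nonzero non-relation, so $\Psi$ is irrational and $\Psi(H\cdot) \neq 1$ for every $H \neq 0$.

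The delicate point, and where the assumption that every nonzero ideal has finite index should be indispensable, is twofold: one cannot simply zero out a polynomial by subtracting a multiple of a relation, since the relevant coefficients need not be divisible by the $a_i$, so the passage to a relation-free form must be managed through the fraction field and a nonzero common multiplier; and a would-be \emph{rational} (nontrivial but not irrational) combined character has a proper nonzero ideal as conductor, so one must rule out that the multiplicative scaling $H = d!\,h_1\cdots h_{d-1}$ lands in that ideal for a positive-density set of $\vec{h}$, which is exactly what the finite-index property controls, keeping the exceptional set of $\vec{h}$ negligible for the van der Corput average. Everything else is the field argument with cosmetic changes.
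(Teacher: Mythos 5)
Your skeleton matches the paper's: reduce to a relation-free family of characters, apply van der Corput $d-1$ times to linearize, and finish with the limit-point argument showing $\lim_{N}\mathbb{E}_{n\in\Phi_N}\chi(n)=0$ for non-trivial $\chi$. You have also correctly located the one step that does not transfer from Theorem~\ref{characterequidistribution}: eliminating a relation $\chi_t(b_t\,\cdot)\cdots\chi_k(b_k\,\cdot)\equiv 1$ requires, over a field, dividing a polynomial by a ring element inside the character, which is unavailable in $R$. But at exactly that point your argument stops being an argument: ``managed through the fraction field and a nonzero common multiplier'' never produces an element of $R$ to feed to $\chi_i$, and the $\chi_i$ are characters of $(R,+)$, not of $\mathrm{Frac}(R)$. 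The paper's actual mechanism is different and is the real content of the proof: pick $b_k\neq 0$ in the relation, use that the ideal $(b_k)$ has finite index $s$ to write $\Phi_N$ as a disjoint union of sets $b_kA_N(i)+r_i$, verify that each $A_N(i)$ is itself a F\o lner sequence with $|A_N(i)|/|\Phi_N|\to 1/s$ (this is where ``every non-zero ideal has finite index'' is genuinely spent), and then observe that $p_k(b_kn+r_i)-p_k(r_i)$ is divisible by $b_k$ in $R[n]$, so the relation applied to $m=p_{k,i}(n)$ deletes $\chi_k$ from each rescaled average. Without some such restriction of $n$ to cosets of $(b_k)$, the divisibility you need never materializes.

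Second, the final paragraph contains a claim that is backwards. If the combined character $\Psi$ were rational with conductor a non-zero ideal $I$, the set of $\vec{h}$ with $d!\,h_1\cdots h_{d-1}\in I$ is not negligible: it contains $\{\vec{h}: h_1\in I\}$, which has density $1/[R:I]>0$ precisely \emph{because} $I$ has finite index. So the finite-index hypothesis makes this exceptional set large, not small, and the van der Corput average over $\vec{h}$ would not absorb it. The actual resolution---which you in fact state correctly one paragraph earlier---is that once the top-degree characters are relation-free, $\Psi(H\,\cdot)$ is non-trivial for every $H\neq 0$ because $(c_iH)_{i\in S}$ is a non-zero tuple, so there is no exceptional set at all; the irrationality hypothesis on the $\chi_i$ is used instead to guarantee that the elimination process cannot delete every top-degree character along the way, a point your proposal does not address.
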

	\begin{proof}
		Assume without loss of generality that $\deg(p_i) \leq \deg(p_{i+1})$, $i=1,\dots,k-1$, and that all involved characters are non-trivial (or we delete them beforehand). Let $t \in \{1,\dots,k\}$ be such that $p_t, p_{t+1},\dots,p_k$ are all of the highest degree in the family $p_1,\dots,p_k$. We will reduce matters to the case where $\chi_t,\dots,\chi_k$ have no non-trivial relations; namely, for any $b_t,\dots,b_k \in R$, not all zero, it is not the case that
		\begin{equation}\label{characterrelation} \chi_t(b_t \, \cdot) \cdots \chi_k(b_k \, \cdot) \equiv 1.
		\end{equation}
		Indeed, assume that this was not the case, and thus, that we could find $b_t,\dots,b_k \in R$, not all zero, such that \ref{characterrelation} holds. Without loss of generality, $b_k \neq 0$. Since $R$ is a good ring, the ideal $(b_k)$ has finite index in $R$; let $r_1,\dots,r_s \in R$ be such that $\bigcup_{i=1}^s(b_k)+r_i=R$. We claim that, in order to prove that \ref{qcharacteravgrings} holds, it is enough to show that for any F\o lner sequences $(\Psi_N(i))$, $i=1,\ldots,s$, we have
		\[\lim_{N \to \infty} \sum_{i=1}^s\mathbb{E}_{n \in \Psi_N(i)} \chi_1(p_1(b_kn+r_i))\cdots \chi_k(p_k(b_kn+r_i))=0.\]
		Indeed, for each $N \in \mathbb{N}$, putting $A_N(i):=\{x \in R : b_kx \in \Phi_N - r_i\}$, we can write $\Phi_N$ as the disjoint union $\bigcup_{i=1}^sb_kA_N(i)+r_i$. (Note that $A_N(i)=\frac{(\Phi_N-r_i) \cap J}{b_k}$, where $J$ denotes the ideal $(b_k)$.) 
		Then, for any bounded function $f: R \to \mathbb{C}$, we have
		\[ \mathbb{E}_{n \in \Phi_N}f(n)=s\sum_{i=1}^s \mathbb{E}_{n \in A_N(i)} f(b_kn+r_i)+o_N(1),\]
		provided that $\frac{|A_N(i)|}{s|\Phi_N|} \xrightarrow[N \to \infty]{} 1$ for each $i \in \{1,\dots,s\}$.
		
		Consequently, we need only check that, for each $i \in \{1,\ldots, s\}$, $(A_N(i))$ is a F\o lner sequence with $\frac{|A_N(i)|}{s|\Phi_N|} \xrightarrow[N \to \infty]{} 1$. Let $t \in R$ and note that
		\begin{equation}\label{anisfolner} \frac{\left| A_N(i) \cap (A_N(i)+t) \right|}{|A_N(i)|}= \frac{\left|(\Phi_N-r_i) \cap (\Phi_N-r_i+b_kt) \cap J \right|}{|(\Phi_N-r_i) \cap J|}
		\end{equation}
		since $R$ is an integral domain.
		
		Now, since $(\Phi_N)$ is a F\o lner sequence and $[R : J]=s<\infty$, we can ``cancel'' the intersections with $J$ appearing on the right-hand side of \ref{anisfolner}. Indeed, notice that
		\[
\begin{split}
|\Phi_N|&=\sum_{i=1}^s |\Phi_N \cap (J+r_i)|=\sum_{i=1}^s |(\Phi_N-r_i) \cap J|\\& = \sum_{i=1}^s |\Phi_N \cap J| + o(|\Phi_N|) =s|\Phi_N \cap J| + o(|\Phi_N|).
\end{split}\]
		Using this fact and the F\o lner nature of $(\Phi_N)$, we see that $(A_N(i))$ is a F\o lner sequence with the desired property.
		
		Thus, it suffices to show  that for each $i \in \{1,\dots,s\}$ and every F\o lner sequence $(\Psi_N)$ we have
		\[ \lim_{N \to \infty} \mathbb{E}_{n \in \Psi_N} \chi_1(p_{1,i}(n))\cdots \chi_k(b_kp_{k,i}(n))=0,\]
		where $p_{j,i}(n)=p_{j}(b_kn+r_i)-p_j(r_i)$, $j=1,\dots,k-1$, and $p_{k,i}(n) =\frac{p_{k}(b_kn+r_i)-p_k(r_i)}{b_k}$. It is clear that, for each $i$, the new family of polynomials $p_{j,i}$ is linearly independent and is such that $p_{j,i}(0)=0$. Now, $\chi_k(b_kp_{k,i}(n))=\chi_t(-b_t p_{k,i}(n)) \cdots \chi_{k-1}(-b_{k-1}\break p_{k,i}(n))$, so we can delete the character $\chi_k$ from the averages under consideration.
		
		It is important to note that this process cannot delete all characters associated to polynomials with the maximum degree. Indeed, any irrational $\chi \in \hat{R}$ is such that
		\[ \chi(b\, \cdot) \not\equiv 1 \]
		for all $b \in R \setminus \{0\}$, so even if we could reduce it to only one non-trivial irrational character, the fact that such characters are irrational means that this process will not delete it. We conclude by applying the same reasoning as in the proof of Theorem~\ref{characterequidistribution}, since it does not make use of the fact that $F$ is a field.
	\end{proof}
	Second, we prove that families of independent polynomials over good rings are good for seminorm estimates. It is easy to see that most of the results in Section~\ref{sec: 4} hold for good rings as well. Here are the only two lemmas that require serious modification:
	\begin{lemma}\label{easy linear bound rings} Let $R$ be a good ring. Let $d \in \mathbb{N}$. Let $p : R^d \to R$ be a degree one polynomial. Then there exists a constant $c>0$ such that any $f \in L^\infty(X)$, we have
		\begin{equation*} \lim_{N\to\infty} \norm{\mathbb{E}_{\mathbf{g}\in\Phi_N} T_{p(\mathbf{g})}f}_{L^2(X)} \leq c\nnorm{f}_2
		\end{equation*}
		for any F\o lner sequence $(\Phi_N)$ in $R^d$.
	\end{lemma}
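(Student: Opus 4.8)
The plan is to follow the proof of Lemma~\ref{easy linear bound}, making the one change forced by $R$ being a ring rather than a field. Writing $p(\mathbf{g}) = a_1 g_1 + \cdots + a_d g_d + a_0$ with $a_1,\dots,a_d$ not all zero, I would first reduce to $a_0 = 0$: since $\mathbb{E}_{\mathbf{g}\in\Phi_N} T_{p(\mathbf{g})}f = T_{a_0}\,\mathbb{E}_{\mathbf{g}\in\Phi_N} T_{p(\mathbf{g})-a_0}f$ and $T_{a_0}$ is an isometry of $L^2(X)$, replacing $p$ by $p - a_0$ changes neither side. With $a_0 = 0$ the polynomial $p$ is additive, so $U_\mathbf{g} := T_{p(\mathbf{g})}$ is a unitary action of $(R^d,+)$ on $L^2(X)$, and the mean ergodic theorem (Theorem~\ref{meanergthm}) gives
\[ \lim_{N\to\infty} \mathbb{E}_{\mathbf{g}\in\Phi_N} T_{p(\mathbf{g})}f = Pf, \]
where $P$ is the orthogonal projection onto $V := \{ f' : T_{p(\mathbf{g})}f' = f' \text{ for all } \mathbf{g}\in R^d\}$.

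The crux is to identify and control $V$. Because $\{ p(\mathbf{g}) : \mathbf{g}\in R^d\} = a_1 R + \cdots + a_d R = J$, the ideal generated by $a_1,\dots,a_d$, the space $V$ is exactly the space of $J$-invariant functions. This is where the ring case genuinely departs from the field case of Lemma~\ref{easy linear bound}: there the analogous set is all of $F$, so $V$ collapses to the constants; here $J$ is merely an ideal. The good-ring hypothesis rescues the argument: as $a_1,\dots,a_d$ are not all zero, $J$ is a nonzero ideal and hence of finite index $s := [R:J] < \infty$.

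To bound $\norm{Pf}_{L^2(X)}$, note that $J$ is a finite-index subgroup of $(R,+)$, so $R$ acts on the finite-dimensional space $V$ through the finite quotient $R/J$; decomposing this action into characters of $R/J$ shows that $V$ is spanned by at most $s$ modulus-one eigenfunctions of $(T_r)_{r\in R}$, all of which lie in the Kronecker factor $\mathcal{K}((T_r)_{r\in R})$. Writing $\tilde f := \mathbb{E}[f \mid \mathcal{K}((T_r)_{r\in R})] = \sum_j c_j \chi_j$ in an orthonormal eigenbasis, we get $Pf = \sum_{j\in S} c_j \chi_j$ with $|S| \leq s$, whence $\norm{Pf}_{L^2(X)}^2 = \sum_{j\in S} |c_j|^2$. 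A direct computation, using ergodicity together with the fact that $\lim_{N\to\infty}\mathbb{E}_{n\in\Phi_N}\chi(n) = 0$ for every nontrivial $\chi\in\hat R$ (shown exactly as at the end of Theorem~\ref{characterequidistribution}), gives $\nnorm{f}_2^4 = \nnorm{\tilde f}_2^4 = \sum_j |c_j|^4$. By the Cauchy--Schwarz inequality, $\sum_{j\in S}|c_j|^2 \leq |S|^{1/2}\big(\sum_j |c_j|^4\big)^{1/2} \leq s^{1/2}\nnorm{f}_2^2$, so the lemma follows with $c = s^{1/4}$. (If one uses total ergodicity, the $J$-action is ergodic and $V$ is just the constants, so in fact $Pf = \int_X f\,d\mu$ and the cruder bound $\norm{Pf}_{L^2(X)} = \nnorm{f}_1 \leq \nnorm{f}_2$ holds with $c=1$.)

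The main obstacle, and the reason the field-case bound $\nnorm{f}_1$ must be relaxed to $\nnorm{f}_2$ with a constant, is exactly the enlargement of the invariant subspace $V$: over a field the image of the linear map $p$ is everything, forcing $V$ to be the constants, whereas over a ring it is only the finite-index ideal $J$, so $V$ may contain nonconstant eigenfunctions. Controlling these extra eigenfunctions is precisely what requires the good-ring hypothesis $[R:J]<\infty$ and what raises the effective seminorm by one level.
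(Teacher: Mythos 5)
Your argument is correct, but the key estimate is obtained by a genuinely different route from the paper's. Both proofs begin the same way: reduce to $p(\mathbf{0})=0$, apply the mean ergodic theorem, and identify the limit as $Pf$ where $P$ projects onto the functions invariant under the ideal $J=(a_1,\dots,a_d)$, with the good-ring hypothesis supplying $[R:J]<\infty$. You then bound $\norm{Pf}_{L^2(X)}$ \emph{spectrally}: $V$ sits inside the Kronecker factor, is spanned by at most $s=[R:J]$ eigenfunctions, and Cauchy--Schwarz on the Fourier coefficients together with the identity $\nnorm{\tilde f}_2^4=\sum_j|c_j|^4$ gives the bound with $c=s^{1/4}$. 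The paper instead stays entirely ``soft'': it writes $\norm{P_1f}^2=\langle f,P_1f\rangle=\lim_N\mathbb{E}_{g\in\Psi_N\cap J}\nnorm{\Delta_gf}_0$, trades the average over $\Psi_N\cap J$ for the average over $\Psi_N$ at the cost of the index factor $[R:J]$, and then uses the monotonicity $\nnorm{\cdot}_0\leq\nnorm{\cdot}_1$ plus Cauchy--Schwarz to reach $\nnorm{f}_2^2$, giving $c=[R:J]^{1/2}$. What your approach buys is a marginally sharper constant and a transparent picture of exactly which eigenfunctions obstruct the $\nnorm{\cdot}_1$ bound; what it costs is ergodicity, which you use in three places (one-dimensionality of eigenspaces, the orthonormal eigenbasis of $\mathcal{K}((T_r)_{r\in R})$, and Lemma~\ref{2seminormkronecker}), whereas the paper's proof assumes nothing about the system. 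Since the lemma is only ever invoked for the (totally) ergodic systems of Section~\ref{sec: 5} --- in which case, as you observe, $V$ collapses to the constants and the statement is nearly trivial --- this restriction is harmless for the application, but you should flag it, since the lemma as stated carries no ergodicity hypothesis.
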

	\begin{proof} In coordinates, we write $p(\mathbf{g}) = p(g_1,\ldots, g_d) = a_1g_1 + \cdots + a_dg_d + a_0$ for $a_0, a_1, \ldots, a_d \in R$. Since $\nnorm{T_{a_0}f}_2 = \nnorm{f}_2$, we may replace $f$ by $T_{a_0}f$; thus assume $a_0 = 0$. 
		Since $R$ is a good ring, $R^d$ is a countable amenable group under addition. Since $p$ is linear, we may define a group action of $(R^d,+)$ on $L^2(X)$ by $U_\mathbf{g}(f') := T_{p(\mathbf{g})}f'$ for each $f' \in L^2(X)$ and $\mathbf{g} \in R^d$. By the mean ergodic theorem (Theorem~\ref{meanergthm}),
		\begin{equation*}
			\lim_{N\to\infty} \mathbb{E}_{\mathbf{g}\in\Phi_N} T_{p(\mathbf{g})}f = P_1f
		\end{equation*}
		in norm, where $P_1$ projects onto $\{ f' : T_{a_1g_1 + \cdots + a_dg_d}f'=f' \text{ for all } g_1,\ldots,g_d \in R\}$, which we observe equals $\{ f' : T_g f' = f' \text{ for all } g \in J\}$, where $J$ is the ideal generated by $a_1,\dots,a_d$. Thus, we have by the mean ergodic theorem that, for any F\o lner sequence $(\Psi_N)$ in $R$,
		\begin{align*}
			\lim_{N\to\infty} \norm{\mathbb{E}_{\mathbf{g}\in\Phi_N} T_{p(\mathbf{g})}f }_{L^2(X)}^2 \ & = \ \norm{P_1f}_{L^2(X)}^2 \\
			& = \ \left\langle P_1f,P_1f \right\rangle \\
			& = \ \left\langle f,P_1f \right\rangle \\
			& = \  \lim_{N\to\infty} \mathbb{E}_{g\in\Psi_N \cap J} \int_X f \cdot T_g \bar{f} \ d\mu  \\
			& = \ \lim_{N \to \infty}\mathbb{E}_{g \in \Psi_N \cap J}\nnorm{\Delta_g f}_0 \\
			\hspace{-.2in} & \leq \  \lim_{N \to \infty} \frac{|\Psi_N|}{|\Psi_N \cap J|} \cdot \mathbb{E}_{g \in \Psi_N}\nnorm{\Delta_g f}_1\\
			& \leq \ [R : J] \cdot \left( \mathbb{E}_{g \in \Psi_N}\nnorm{\Delta_g f}_1^2\right)^{1/2} \\
			& = \ [R : J]\cdot\nnorm{f}_2,
		\end{align*}
		proving the first lemma.
	\end{proof}
	\begin{lemma}\label{linear seminorm identity rings} Let $R$ be a good ring. Let $d \in \mathbb{N}$. Let $p : R^d \to R$ be a degree one polynomial with $p(\mathbf{0}) = 0$. Then there exists $c>0$ such that for any $f \in L^\infty(X)$, for any integer $k \geq 0$, and any F\o lner sequence $(\Phi_N)$ in $R^d$, we have
		\begin{equation*}
			\lim_{N\to\infty} \mathbb{E}_{\mathbf{g}\in\Phi_N} \nnorm{f \cdot T_{p(\mathbf{g})} \bar{f}}_k^{2^k} \leq c \nnorm{f}_{k+1}^{2^{k+1}}.
		\end{equation*}
	\end{lemma}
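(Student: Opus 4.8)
The plan is to follow the proof of Lemma~\ref{linear seminorm identity}, replacing the one place where ``$F$ is a field'' was used---the coincidence of two invariant subspaces---by a quantitative comparison that costs a factor equal to the index of a naturally arising ideal.

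First I would write $p(\mathbf{g}) = a_1 g_1 + \cdots + a_d g_d$ (there is no constant term since $p(\mathbf{0}) = 0$), with not all $a_i$ zero because $\deg p = 1$, and set $J := (a_1,\dots,a_d)$, the ideal they generate. Since $R$ is good and $J \neq \{0\}$, the index $s := [R:J]$ is finite, and I claim $c := s$ works. Writing $G := \bigotimes_{\varepsilon \in \{0,1\}^k} \mathcal{C}^{|\varepsilon|} f$ and unfolding the seminorm through the product-space formula exactly as in the first displayed lines of the proof of Lemma~\ref{linear seminorm identity}, the left-hand side becomes $\lim_{N\to\infty} \mathbb{E}_{\mathbf{g}\in\Phi_N} \int_{X^{[k]}} G \cdot T_{p(\mathbf{g})}^{[k]} \bar{G} \ d\mu^{[k]}$. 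Applying the mean ergodic theorem (Theorem~\ref{meanergthm}) to the $(R^d,+)$-action $\mathbf{g} \mapsto T_{p(\mathbf{g})}^{[k]}$---whose set of values $p(\mathbf{g})$ is exactly the ideal $J$, so that its invariant subspace is $\{v : T_g^{[k]} v = v \text{ for all } g \in J\}$ with orthogonal projection $P_J$---this equals $\int_{X^{[k]}} G \cdot P_J \bar{G} \ d\mu^{[k]}$.

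In contrast with the field case, $P_J$ projects only onto the functions invariant under the finite-index subgroup $J$, not under all of $R$. To recover the $R$-seminorm I would apply the mean ergodic theorem a second time, now to the $J$-action with the Følner sequence $(\Psi_N \cap J)$ (which is Følner in $J$ precisely because $J$ has finite index), where $(\Psi_N)$ is a fixed Følner sequence in $R$. A direct check gives $\int_{X^{[k]}} G \cdot T_g^{[k]} \bar{G} \ d\mu^{[k]} = \nnorm{f \cdot T_g \bar{f}}_k^{2^k}$, so running this backwards yields $\int_{X^{[k]}} G \cdot P_J \bar{G} \ d\mu^{[k]} = \lim_{N\to\infty} \mathbb{E}_{g \in \Psi_N \cap J} \nnorm{f \cdot T_g \bar{f}}_k^{2^k}$; that is, the left-hand side is an average of the nonnegative quantities $\nnorm{\Delta_g f}_k^{2^k}$ over the subgroup $J$. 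The defining recursion expresses the target as the same average taken over all of $R$, namely $\nnorm{f}_{k+1}^{2^{k+1}} = \lim_{N\to\infty} \mathbb{E}_{g \in \Psi_N} \nnorm{\Delta_g f}_k^{2^k}$.

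The comparison of these two averages is the heart of the matter, and nonnegativity is what makes it go. For $k \geq 1$ each term $\nnorm{\Delta_g f}_k^{2^k}$ is nonnegative, so enlarging the summation set from $\Psi_N \cap J$ to $\Psi_N$ can only increase the sum, giving $\mathbb{E}_{g \in \Psi_N \cap J} \nnorm{\Delta_g f}_k^{2^k} \leq \frac{|\Psi_N|}{|\Psi_N \cap J|} \mathbb{E}_{g \in \Psi_N} \nnorm{\Delta_g f}_k^{2^k}$; since $\frac{|\Psi_N|}{|\Psi_N \cap J|} \to [R:J] = s$ (the same coset count used in the proof of Theorem~\ref{characterequidistributionrings}), letting $N \to \infty$ produces the claimed bound with $c = s$. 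I expect the one delicate point to be the bottom case $k = 0$, where the correlation $\nnorm{\Delta_g f}_0 = \int_X f \cdot T_g \bar{f} \ d\mu$ need no longer be nonnegative and the naive enlargement fails; there one first bounds $\mathrm{Re}\,\nnorm{\Delta_g f}_0 \leq \nnorm{\Delta_g f}_1$ and then runs the index-weighted enlargement on the nonnegative quantities $\nnorm{\Delta_g f}_1$, exactly as in the proof of Lemma~\ref{easy linear bound rings} (and, as there, the total ergodicity of the system in the intended application keeps the relevant invariant subspaces small). Everything else is routine bookkeeping with the product-space description of the seminorms; the genuinely new ingredient is this replacement of the field-case identity by an index-weighted inequality, powered by the nonnegativity of the seminorm correlations.
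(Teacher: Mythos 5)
Your proposal is correct and follows essentially the same route as the paper's proof: unfold the seminorm via the product-space formula, apply the mean ergodic theorem first to the $(R^d,+)$-action (whose invariant subspace is that of the finite-index ideal $J=(a_1,\dots,a_d)$) and then to the $J$-action along $(\Psi_N\cap J)$, and finally enlarge the averaging set from $\Psi_N\cap J$ to $\Psi_N$ at the cost of the factor $\lim_N |\Psi_N|/|\Psi_N\cap J|=[R:J]$, using nonnegativity of $\nnorm{\Delta_g f}_k^{2^k}$. Your remark about the $k=0$ case is a point the paper's write-up glosses over; it is harmless in the intended applications (where $k\geq 1$, and where total ergodicity forces $P_J=P_R$ anyway), and your suggested patch mirrors what the paper does in Lemma~\ref{easy linear bound rings}.
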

	\begin{proof} The main idea in this lemma is the same as in Lemma \ref{easy linear bound rings}. Unfolding definitions, letting $(\Psi_N)$ be a F\o lner sequence in $R$, and agreeing to discuss the marked equality afterwards, we calculate
		\begin{align*}
			& \lim_{N\to\infty} \mathbb{E}_{\mathbf{g}\in\Phi_N} \nnorm{f \cdot T_{p(\mathbf{g})} \bar{f}}_k^{2^k} \\
			& = \ \lim_{N\to\infty} \mathbb{E}_{\mathbf{g}\in\Phi_N} \int_{X^{[k]}} \bigotimes_{\varepsilon \in \{0,1\}^k} \mathcal{C}^{|\varepsilon|}(f\cdot T_{p(\mathbf{g})} \bar{f}) \ d\mu^{[k]} \\
			& = \ \lim_{N\to\infty} \mathbb{E}_{\mathbf{g}\in\Phi_N} \int_{X^{[k]}} \left( \bigotimes_{\varepsilon \in \{0,1\}^k} \mathcal{C}^{|\varepsilon|}f \right) \cdot \left( \bigotimes_{\varepsilon \in \{0,1\}^k} T_{p(\mathbf{g})}\mathcal{C}^{|\varepsilon|}\bar{f} \right) d\mu^{[k]} \\
			& = \ \lim_{N\to\infty} \mathbb{E}_{\mathbf{g}\in\Phi_N} \int_{X^{[k]}} \left( \bigotimes_{\varepsilon \in \{0,1\}^k} \mathcal{C}^{|\varepsilon|}f \right) \cdot T_{p(\mathbf{g})}^{[k]} \left( \bigotimes_{\varepsilon \in \{0,1\}^k} \mathcal{C}^{|\varepsilon|}\bar{f} \right) d\mu^{[k]} \\
			& \overset{*}{=} \ \lim_{N\to\infty} \mathbb{E}_{g \in \Psi_N \cap J} \int_{X^{[k]}}\left( \bigotimes_{\varepsilon \in \{0,1\}^k} \mathcal{C}^{|\varepsilon|}f \right) \cdot T_{g}^{[k]} \left( \bigotimes_{\varepsilon \in \{0,1\}^k} \mathcal{C}^{|\varepsilon|}\bar{f} \right) d\mu^{[k]} \\
			& = \ \lim_{N\to\infty} \mathbb{E}_{g\in\Psi_N \cap J} \nnorm{f \cdot T_g \bar{f}}_k^{2^k} \\
			& \leq \ \lim_{N\to\infty}\frac{|\Psi_N|}{|\Psi_N \cap J|}\cdot  \mathbb{E}_{g\in\Psi_N} \nnorm{f \cdot T_g \bar{f}}_k^{2^k} \\
			& = \ [R : J]\cdot \nnorm{f}_{k+1}^{2^{k+1}}.
		\end{align*}
		For the marked equality, consider the following. Since $p$ is linear, we may define a group action of $(R^d, +)$ on $L^2(X^{[k]})$ by $U_\mathbf{g}(f') := T_{p(\mathbf{g})}^{[k]}f'$ for each $f' \in L^2(X^{[k]})$ and $\mathbf{g} \in R^d$. By the mean ergodic theorem (Theorem~\ref{meanergthm}),
		\begin{multline}\label{one rings}
			\lim_{N\to\infty} \mathbb{E}_{\mathbf{g}\in\Phi_N} \int_{X^{[k]}} \left( \bigotimes_{\varepsilon \in \{0,1\}^k}\mathcal{C}^{|\varepsilon|} f \right) \cdot T_{p(\mathbf{g})}^{[k]} \left( \bigotimes_{\varepsilon \in \{0,1\}^k} \mathcal{C}^{|\varepsilon|}\bar{f} \right) d\mu^{[k]} = \\ \int_{X^{[k]}} \left( \bigotimes_{\varepsilon \in \{0,1\}^k} \mathcal{C}^{|\varepsilon|}f \right) \cdot P_1 \left( \bigotimes_{\varepsilon \in \{0,1\}^k} \mathcal{C}^{|\varepsilon|}\bar{f} \right) d\mu^{[k]},
		\end{multline}
		where $P_1$ is the orthogonal projection onto the subspace $\{ f' : T_{p(\mathbf{g})}^{[k]} f' = f' \text{ for all } \mathbf{g} \in R^d \}$, which we observe equals $\{ f' : T_g^{[k]} f' = f' \text{ for all } g \in J\}$, where $J$ is the ideal generated by the coefficients of $p$. Using the mean ergodic theorem for the action along $J$, equality \ref{one rings}, and the fact that a F\o lner sequence intersected with $J$ is a F\o lner sequence for $J$ completes the proof.
	\end{proof}
	We close this section with an example showcasing the necessity of independence of the polynomials $p_1,\dots,p_k$ in Theorem~\ref{rindep}. For the sake of concreteness, we will consider the case when $k=2$, $p_1(n) = n$, and $p_2(n) = an$ for some $a \in R \setminus \{0\}$, but it will be apparent from the method that we could extend this to an arbitrary family of polynomials which is subject to a non-trivial linear relation.
	\begin{remark}\label{example}
		Let $R$ be a countable ring, and let $X:=bR$ be the Bohr compactification\footnote{Given an abelian topological group $(G,+)$, the Bohr compactification $bG$ of $G$ is the maximal compact Hausdorff abelian group that is a compactification of $G$ and is such that any continuous homomorphism $\varphi : G \to K$ from $G$ to a compact Hausdorff group $K$ admits a unique extension from $bG$ to $K$. (These properties characterize $bG$ up to isomorphism.)} of $R$. Consider the measure-preserving system $(X,\mathrm{Borel}(X),(T_n)_{n \in R}, \mu)$, where $T_nx=x+n$ for any $x \in X$ (recall that $R$ is densely embedded in $X$), and $\mu$ is the unique Haar probability measure on $X$. Let $a \in R \setminus \{0\}$. Let $\chi \in \hat{R}$ be non-trivial, and consider $f:X \to \mathbb{S}^1$ defined as the unique extension of $\chi(a\, \cdot) : F \to \mathbb{S}^1$, and similarly $g: X \to \mathbb{S}^1$ extends $\bar{\chi}: F \to \mathbb{S}^1$. Now let $(\Phi_N)$ be a F\o lner sequence in $F$. Notice that for each $N \in \mathbb{N}$ and for every $x \in X$ we have
		\[ \mathbb{E}_{n \in \Phi_N} f(T_nx)g(T_{an}x)=f(x)g(x),\]
		as the equality is true if $x \in R$, which is a dense subset of $X$. On the other hand, since $f$ is an extension of a non-trivial character to $X$, it is actually the case that $f \in \hat{X}$, and $f \neq 1$. This implies that
		\[ \int_X f \ d\mu=0,\]
		but of course $|f(x)g(x)|=1$ for all $x \in X$ by construction, which shows that we do not have convergence to $\int_X f \ d\mu \cdot\int_X g \ d\mu$ if $p_1(n)=n$ and $p_2(n)=an$.
	\end{remark}

	\section*{Acknowledgments} We would like to thank Nikos Frantzikinakis for many helpful discussions and comments, including a technical correction to the definition of ``good for seminorm estimates'' that we made after publication; this correction does not affect the validity of any applications in the published version of the article. We would also like to thank Ethan Ackelsberg for a correction to the formulation of Theorem~\ref{findep}.

	\medskip
Received October 2021; revised January 2022; early access February 2022.
\medskip
	
\end{document}